\documentclass[a4paper]{article}
\usepackage[utf8]{inputenc}
\usepackage{amssymb,amsmath,amsfonts,amsthm}
\usepackage{anysize, tikz, caption}
\usetikzlibrary{arrows}
\marginsize{1in}{1in}{1in}{1in}
\usepackage{graphicx}
\usepackage{xcolor, comment}
\usepackage{enumerate,tabularray}
\usepackage[font=footnotesize]{caption}
\date{}
\usetikzlibrary{arrows.meta}
\usepackage{multicol}
\usepackage{tabularray}

\usepackage{hyperref}
\hypersetup{colorlinks=true,linkcolor=blue}

\newtheorem{theorem}{Theorem}[section]
\newtheorem{lemma}[theorem]{Lemma}

\newtheorem{proposition}[theorem]{Proposition}
\newtheorem{conjecture}{Conjecture}

\newtheorem{remark}[theorem]{Remark}
\newtheorem{observation}[theorem]{Observation}

\DeclareMathOperator{\HIV}{\mathbf{HIV}}
\DeclareMathOperator{\hiv}{\mathbf{hiv}}
\DeclareMathOperator{\HH}{H}
\DeclareMathOperator{\II}{I}
\DeclareMathOperator{\DD}{D}
\DeclareMathOperator{\CC}{C}
\DeclareMathOperator{\Ha}{\widehat{H}}

\usepackage{authblk}

\title{Extinction thresholds in a graph-based model of HIV infection dynamics}

\author[1]{Manuel A. Espinosa-García \thanks{manuel.espinosa@.cua.uam.mx}}
\author[2]{Ana Paulina Figueroa \thanks{apaulinafg@gmail.com}}
\author[3]{Julián A. Fresán-Figueroa \thanks{jfresan@correo.cua.uam.mx}}
\author[4]{Gerardo L. Maldonado \thanks{gmaldonado@im.unam.mx}}
\author[5]{L. Ariadna Sánchez-Solís \thanks{lizzeth.sanchez@cua.uam.mx}}
\affil[1,3,5]{Departamento de Matemáticas Aplicadas y Sistemas, UAM-Cuajimalpa}
\affil[2]{Departamento Acad\'emico de Matem\'aticas, ITAM}
\affil[4]{Instituto de Matemáticas, UNAM}

\begin{document}

\maketitle
   
\begin{abstract}
We study a graph-based cellular automaton for HIV infection dynamics in lymph-node networks, originally introduced by Mukwembi. Each vertex represents a cell site that may be healthy, infected, or dead, and the evolution is controlled by a replacement parameter $R$, which determines whether a dead cell is replaced by an infected or a healthy cell according to the number of its infected neighbors. For a graph $G$, we introduce two extinction parameters. The parameter $\operatorname{hiv}(G)$ is the smallest value of $R$ for which extinction occurs for every admissible initial configuration, whereas $\operatorname{HIV}(G)$ is the smallest threshold such that extinction occurs for every replacement parameter greater than or equal to it. We prove the general bounds $2\leq \operatorname{hiv}(G)\leq \operatorname{HIV}(G)\leq \Delta(G)+1$ and characterize the extremal case $\operatorname{HIV}(G)=\Delta(G)+1$. We also show that the gap $\operatorname{HIV}(G)-\operatorname{hiv}(G)$ is unbounded and determine both parameters for some classical families of graphs. Finally, we study the dynamics of the model using the state-transition digraph of the system and the configurations whose trajectories converge to nontrivial periodic orbits. The results show that extinction depends not only on the replacement parameter but also on the structural properties of the underlying graph.
\end{abstract}

\section{Introduction}

The study of epidemic processes on graphs has attracted considerable attention in recent decades. In these models, vertices represent individuals, cells, or sites, while edges describe possible transmission or local interaction. One of the central questions is to determine the conditions under which an infection persists or eventually disappears from the system. Previous research has shown that the combinatorial structure of the graph plays a fundamental role in the existence of epidemic thresholds and in the long-term behavior of the process \cite{PastorSatorras2001, VanMieghem2009, PastorSatorras2015}. This interaction between network structure and dynamical behavior has motivated the development of graph-based models for a wide variety of biological and epidemiological systems.

The spread of HIV in lymphatic tissues has been studied using models where local interactions between cells determine the global behavior of the infection. Experimental evidence shows that most HIV replication takes place in lymph nodes, which can be viewed as networks of sites where cells interact \cite{Cohen1997}. This motivates the use of graph-theoretic models to describe both the structure of the tissue and the dynamics of the infection.

In \cite{mukwembi08}, Mukwembi proposed a graph-based model for HIV infection dynamics in lymph nodes. In this model, the lymph node is represented by a simple graph $G$, where each vertex represents a site occupied by a cell and edges represent adjacency between sites. Each vertex is assigned one of three possible states, represented by a function $f:V(G)\to\{0,1,2\}$. The value $0$ represents a healthy cell, $1$ an infected cell, and $2$ a dead cell. At the initial time, only healthy and infected vertices are considered, since no cell is assumed to be dead at the beginning of the process.

Given a positive integer $R$, called the replacement parameter, the model evolves in discrete time steps according to the following rules:

\begin{enumerate}
    \item A healthy cell becomes infected if it has at least one infected neighbor; otherwise, it remains healthy.
    \item An infected cell dies in the next time step.
    \item A dead cell is replaced by an infected cell if it has at least $R$ infected neighbors; otherwise, it is replaced by a healthy cell.
\end{enumerate}

These rules describe the interaction between infection, cell death, and replacement. Globally, an initial state is represented by a function $f_0:V(G)\to\{0,1,2\}$ such that $f_0^{-1}(2)=\emptyset$, and the infection evolves according to the previous rules. This evolution is represented by a state sequence $(f_0,f_1,f_2,\ldots),$ where $f_t$ describes the state of the system at time $t$.

For an initial state $f_0$ on a graph $G$ and a replacement parameter $R$, we are interested in determining whether there exists a time $t_0$ such that every vertex is healthy from time $t_0$ onward. If this happens, we say that the infection becomes extinct on $(G,f_0)$ under the replacement parameter $R$; otherwise, we say that the infection is is latent on $(G,f_0)$ for $R$.

Mukwembi proved that, for every admissible initial state on a graph $G$ with $n$ vertices, the infection becomes extinct when $R=n-1$ \cite{mukwembi08}. This result suggests the existence of a threshold between extinction and latency. However, the extinction property is not obviously monotone in $R$. Increasing $R$ makes the reinfection of dead vertices harder, but the global dynamics is synchronous and different values of $R$ may produce different transient configurations. Therefore, for a fixed pair $(G,f_0)$, it is useful to distinguish between the first value of $R$ that leads to extinction and the threshold after which extinction occurs for every larger value of $R$.

Motivated by this distinction, we introduce two extinction parameters. The parameter $\hiv(G)$ is the first replacement parameter that guarantees the extinction of the infection for every admissible initial state of $G$. The parameter $\HIV(G)$ is the eventual extinction threshold, that is, the first value $R_0$ such that every $R\geq R_0$ leads to extinction of the infection for every admissible initial state of $G$. Equivalently, $\HIV(G)$ is one more than the largest replacement parameter for which latency is still possible. This distinction is necessary because, in this synchronous cellular automaton, extinction for one value of $R$ does not automatically imply extinction for all larger values.

Graph-based epidemic models are closely related to cellular automata, where the state of each vertex evolves according to local interaction rules involving its neighbors. Cellular automata have been widely used to model infectious diseases and other biological processes because they provide a natural framework for describing how complex global behaviors emerge from simple local interactions \cite{WhiteDelReySanchez2007,Pfeifer2008}. These models can also be viewed as finite dynamical systems, in which a global transition function acts on the finite set of all possible configurations of the network. This viewpoint allows the use of dynamical tools such as phase spaces, periodic orbits, fixed points, and attractors to study long-term behavior \cite{JarrahLaubenbacher2008,GarciaJarrahLaubenbacher2001}. The graph-based HIV model introduced by Mukwembi fits naturally into this framework, since its evolution is completely determined by local update rules and a replacement parameter governing the reinfection process.

The content of this paper is organized as follows. In Section \ref{sec:extinctionT} we formalize extinction and latency through extinction and latency sets, which leads to the parameters $\hiv(G)$ and $\HIV(G)$. Then we prove the general bounds $2\leq \hiv(G)\leq \HIV(G)\leq \Delta(G)+1$ for every connected graph with at least one edge and characterize the extremal case $\HIV(G)=\Delta(G)+1$. Finally, we show that the gap $\HIV(G)-\hiv(G)$ is unbounded. In Section \ref{sec:rState}, we introduce the $R$-state transition digraph of the model, which gives a phase-space interpretation of extinction and latency in terms of convergence. Here we stablish some useful properties about connected components of this transition digraph. Also, we show that for any positive integer $n$ there exists a graph and a replacement parameter such that this digraph has a cycle of length $n$. Moreover, we characterize settings where there exist cycles of length $2$. In Section \ref{sec:graphFamilies} we determine $\hiv(G)$ and $\HIV(G)$ on $C_n$, $K_n$ and $K_{n,m}$. Also, we show computational results on wheel graphs $W_n$. In Section \ref{sec:conclusions} we comment some conclusions and propose directions for future research.

Throughout the paper, all graphs are finite, simple, connected, and have at least three vertices, unless otherwise stated. For a vertex $v\in V(G)$, we denote by $N(v)$ the set of neighbors of $v$ and by $d(v)=|N(v)|$ its degree. Also, we denote by
$\Delta(G)=\max\{d(v):v\in V(G)\}$ and $\delta(G)=\min\{d(v):v\in V(G)\}$ the maximum and minimum degree of $G$, respectively. If the graph is clear from context, we simply write $\Delta$ and $\delta$.

\section{Extinction thresholds in the graph-based HIV model} \label{sec:extinctionT}

Given a graph $G$, a state of the system is a function $f:V(G)\longrightarrow \{0,1,2\}$, where $0$, $1$, and $2$ represent a healthy, infected, and dead cell, respectively. An \emph{admissible initial state} is a function $f_0:V(G)\longrightarrow \{0,1\}$, equivalently, $f_0^{-1}(2)=\emptyset$. We denote the set of admissible initial states of $G$ by $\Omega_{0}(G)=\{f:V(G)\to\{0,1\}\}.$

Let $R$ be a positive integer, the replacement parameter. Given a state $f_t$ at time $t$, for each $v\in V(G)$ let $d_{t,\II}(v)=|\{x\in N(v):f_t(x)=1\}|$. The state $f_{t+1}$ is determined by the following rules:
\begin{equation}\label{eq:rules}
f_{t+1}(v)=\begin{cases}
0, & \text{if } f_t(v)=0 \text{ and } d_{t,\II}(v)=0,\\
0, & \text{if } f_t(v)=2 \text{ and } d_{t,\II}(v)<R,\\
1, & \text{if } f_t(v)=0 \text{ and } d_{t,\II}(v)\geq 1,\\
1, & \text{if } f_t(v)=2 \text{ and } d_{t,\II}(v)\geq R,\\
2, & \text{if } f_t(v)=1.
\end{cases}
\end{equation}

For $f_0\in\Omega_0(G)$ and $R\in\mathbb{Z}^{+}$, let $(f_t^{R,f_0})_{t\geq 0}$ denote the sequence generated by \eqref{eq:rules} from $f_0$. When no confusion can arise, we simply write $f_t$. Let $\boldsymbol{0}_G\in\Omega_0(G)$ denote the all-healthy state, that is, $\boldsymbol{0}_G(v)=0$ for every $v\in V(G).$ We shall use the following result of Mukwembi.

\begin{theorem}[Mukwembi, \cite{mukwembi08}]\label{Teo:n-1}
Let $G$ be a graph of order $n\geq 3$, and let $f_0\in\Omega_0(G)$. If $R=n-1$, then there exists $t_0\geq 0$ such that $f_{t_0}^{R,f_0}=\boldsymbol{0}_G$.
\end{theorem}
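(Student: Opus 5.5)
The plan is to show that for $R=n-1$ reinfection of a dead vertex is so restrictive that the dynamics is essentially a single breadth-first "wave" spreading out from the initially infected set. Let $S=f_0^{-1}(1)$. If $S=\emptyset$, then $f_0=\boldsymbol{0}_G$ and we are done, so assume $S\neq\emptyset$. For $v\in V(G)$ write $\rho(v)=\min_{s\in S}\operatorname{dist}(v,s)$, which is finite because $G$ is connected.

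\textbf{Step 1 (wave description).} We prove by induction on $t$ that, as long as no dead vertex has been reinfected, the following holds:
\[
f_t(v)=1 \iff \rho(v)=t,\qquad f_t(v)=2 \iff \rho(v)=t-1,
\]
and $f_t(v)=0$ otherwise. At $t=0$ this is exactly the statement that $S=f_0^{-1}(1)$ and that there are no dead vertices. For the inductive step we go through the cases of \eqref{eq:rules} one at a time.
\begin{itemize}
\item A vertex with $\rho(v)\ge t+2$ is healthy at time $t$. Every neighbour $x$ of $v$ has $\rho(x)\ge\rho(v)-1>t$, so no neighbour is infected and $v$ stays healthy.
\item A vertex with $\rho(v)=t+1$ is healthy at time $t$ and has a neighbour with $\rho=t$, which is infected. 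Hence $v$ becomes infected.
\item A vertex with $\rho(v)=t$ is infected and therefore dies.
\item A vertex with $\rho(v)\le t-2$ is healthy at time $t$. Every neighbour has $\rho\le\rho(v)+1<t$, so no neighbour is infected and $v$ stays healthy.
\item A vertex with $\rho(v)=t-1$ is dead. It becomes healthy unless it has $R=n-1$ infected neighbours, which is precisely a reinfection event.
\end{itemize}
Consequently, if reinfection never occurs, then $f_t=\boldsymbol{0}_G$ for every $t\ge \max_v\rho(v)+2$, and the infection becomes extinct.

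\textbf{Step 2 (a reinfection kills the infection).} Suppose that at some time $t$ a dead vertex $v$ has at least $n-1$ infected neighbours. Then $d(v)=n-1$ and every other vertex is infected at time $t$. At time $t+1$ the vertex $v$ is infected and every other vertex $u$ is dead. Each such $u$ has at most one infected neighbour, namely $v$, and $1<n-1$ because $n\ge 3$. So at time $t+2$ every $u\neq v$ is healthy, while $v$ is dead. At time $t+2$ there are no infected vertices at all. Therefore, at time $t+3$, the dead vertex $v$ becomes healthy and the healthy vertices stay healthy, so $f_{t+3}=\boldsymbol{0}_G$.

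Note that Step 2 does not need the wave structure from Step 1. It only uses the fact that $R=n-1$ forces all other vertices to be infected at the moment of reinfection. Combining the two steps: either no reinfection ever happens, and Step 1 gives extinction by time $\max_v\rho(v)+2$, or a first reinfection happens, and Step 2 gives extinction three steps later. This proves the theorem.

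The main obstacle is Step 1. A priori, excitable dynamics of this kind can sustain rotating waves; for example, on a long cycle the configuration $\dots,0,2,1,0,\dots$ propagates forever. The point of Step 1 is that an admissible start, which contains no dead cells, can only generate the symmetric BFS wave from $S$, and such a wave cannot re-excite vertices behind it. The induction must therefore be set up carefully, using the Lipschitz property $|\rho(x)-\rho(v)|\le1$ for adjacent vertices $x$ and $v$, and it is valid only up to the first reinfection event, which is exactly where Step 2 takes over.
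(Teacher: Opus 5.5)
Your proof is correct and complete. The paper does not prove this statement itself; it quotes it from Mukwembi, so there is no argument of the paper's to match line by line.

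The closest material in the paper is the remark following Proposition~\ref{prop:reinfection}. It asserts, without proof, that if no vertex ever lies in $\DD_t\cap\II_{t+1}$, then each vertex is infected exactly once, at time $\operatorname{dist}(v,\II_0)$, and the infection dies out at time $\max_v\operatorname{dist}(v,\II_0)+2$. Your Step~1 is exactly that remark. You actually prove it, by an induction on $t$ that uses $|\rho(x)-\rho(v)|\le 1$ along edges and stays valid up to the first reinfection time.

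Step~2 is the only ingredient specific to $R=n-1$, and it is the right one. A reinfection forces a dead universal vertex whose $n-1$ neighbours are all infected. Because $1<n-1$, that configuration collapses to $\boldsymbol{0}_G$ in three steps; this is also exactly where the hypothesis $n\ge 3$ is needed.

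You could sharpen the conclusion by combining the two steps. At the first reinfection time $t$, the wave description still holds. So the dead vertex $v$ has $\rho(v)=t-1$, and every other vertex has $\rho=t$. Since some vertex has $\rho=0$ and there are no dead vertices at time $0$, this forces $t=1$, $\II_0=\{v\}$, and $v$ universal. Hence the only admissible states that ever see a reinfection are those with a single, universal, initially infected vertex. In every case the extinction time is at most $\max\{\max_v\rho(v)+2,\,4\}$.
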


Since $\boldsymbol{0}_G$ is a fixed point of the dynamics, reaching $\boldsymbol{0}_G$ at some time is equivalent to remaining in the all-healthy state at every subsequent time. Let $G$ be a graph and let $f_0\in\Omega_0(G)$. We define the \emph{extinction set} of $(G,f_0)$ as  the set $ \mathcal{E}(G,f_0) = \left\{ R\in\mathbb{Z}^{+}: f_t^{R,f_0}=\boldsymbol{0}_G \text{ for some }t\geq 0 \right\}$. Its complement $ \mathcal{L}(G,f_0) = \mathbb{Z}^{+}\setminus\mathcal{E}(G,f_0) $ is called the \emph{latency set} of $(G,f_0)$. For a graph $G$, we define $ \mathcal{E}(G) = \bigcap_{f_0\in\Omega_0(G)} \mathcal{E}(G,f_0) $ and $ \mathcal{L}(G) = \mathbb{Z}^{+}\setminus\mathcal{E}(G) = \bigcup_{f_0\in\Omega_0(G)} \mathcal{L}(G,f_0)$.

Thus, $R\in\mathcal{E}(G)$ if and only if the infection becomes extinct for every admissible initial state of $G$, whereas $R\in\mathcal{L}(G)$ if and only if there exists an admissible initial state for which the infection is latent.

For $f_0\in\Omega_0(G)$, define the \emph{first extinction value} of $(G,f_0)$ by $\hiv(G,f_0)=\min\mathcal{E}(G,f_0)$, and its \emph{eventual extinction threshold} by $\HIV(G,f_0)=\min\left\{R_0\in\mathbb{Z}^{+}:\{R\in\mathbb{Z}^{+}:R\geq R_0\}\subseteq\mathcal{E}(G,f_0)\right\}$. Equivalently, $\HIV(G,f_0)=1+\max\mathcal{L}(G,f_0)$, with the convention that $\max\emptyset=0$.

Similarly, define $\hiv(G)=\min\mathcal{E}(G)$ and $\HIV(G)=\min\left\{R_0\in\mathbb{Z}^{+}:\{R\in\mathbb{Z}^{+}:R\geq R_0\}\subseteq\mathcal{E}(G)\right\}$. Equivalently, $\HIV(G)=1+\max\mathcal{L}(G)$.

Thus, $\hiv(G)$ is the first replacement parameter that guarantees the extinction of the infection for every admissible initial state, whereas $\HIV(G)$ is one more than the last replacement parameter for which latency is still possible. Theorem \ref{Teo:n-1}, together with the fact that no dead vertex can be reinfected when $R\geq n$, implies that each extinction set contains all sufficiently large integers. Hence, both $\hiv(G)$ and $\HIV(G)$  are well defined.

\begin{observation}\label{obs:minmax}
For every admissible initial state $f_0\in\Omega_{0}(G)$, $ \hiv(G,f_0)\leq \HIV(G,f_0)$, and, at the graph level, $\hiv(G)\leq \HIV(G)$. Moreover, since $\Omega_0(G)$ is finite, $\HIV(G)=\max_{f_0\in\Omega_0(G)}\HIV(G,f_0).$

On the other hand, in general one only has $ \max_{f_0\in\Omega_0(G)} \hiv(G,f_0) \leq \hiv(G),$ and the inequality may be strict because the first extinction value may depend on the initial state.
\end{observation}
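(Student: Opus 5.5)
The observation has four parts, and each follows from the definitions together with the fact, already noted after Theorem \ref{Teo:n-1}, that every extinction set contains all sufficiently large integers. I will prove them in turn.

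\textbf{Step 1: $\hiv(G,f_0)\leq \HIV(G,f_0)$.} Let $R_0=\HIV(G,f_0)$. By definition $\{R: R\geq R_0\}\subseteq \mathcal{E}(G,f_0)$. In particular $R_0\in\mathcal{E}(G,f_0)$. Since $\hiv(G,f_0)$ is the minimum of $\mathcal{E}(G,f_0)$, we get $\hiv(G,f_0)\leq R_0$. The same argument applied to $\mathcal{E}(G)$ gives $\hiv(G)\leq\HIV(G)$. Note that $\mathcal{E}(G)$ is nonempty, because it is a finite intersection of sets each containing every integer from some point on.

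\textbf{Step 2: $\HIV(G)=\max_{f_0}\HIV(G,f_0)$.} I will use the formulas $\HIV=1+\max\mathcal{L}$ and $\mathcal{L}(G)=\bigcup_{f_0}\mathcal{L}(G,f_0)$.

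Each $\mathcal{L}(G,f_0)$ is finite, since its complement contains all large integers. The union is over the finite set $\Omega_0(G)$, so $\mathcal{L}(G)$ is also finite.

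The maximum of a finite union of finite sets is the maximum of their maxima. The convention $\max\emptyset=0$ is consistent with this, because every maximum is at least $0$. Therefore
\[
\max\mathcal{L}(G)=\max_{f_0}\max\mathcal{L}(G,f_0),
\]
and adding $1$ to both sides gives the claim.

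\textbf{Step 3: $\max_{f_0}\hiv(G,f_0)\leq\hiv(G)$.} Take $R=\hiv(G)$. Then $R\in\mathcal{E}(G)\subseteq\mathcal{E}(G,f_0)$ for every $f_0$. Hence $\hiv(G,f_0)\leq R$ for every $f_0$, and taking the maximum over $f_0$ gives the inequality.

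\textbf{Step 4: the inequality in Step 3 can be strict.} This is the only part that is not purely formal, and it needs an example. The point is that extinction is not monotone in $R$, so the sets $\mathcal{E}(G,f_0)$ need not be upward-closed. Suppose two initial states $f_0,f_0'$ have extinction sets whose minima lie in the other's latency set. Then each $\hiv(G,\cdot)$ is small, while the first common extinction value is larger.

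Rather than construct such an example here, I will point forward to the unbounded-gap and small-graph computations later in the paper. The statement as written only claims that strictness "may" occur. Exhibiting an explicit graph with $\mathcal{E}(G,f_0)$ and $\mathcal{E}(G,f_0')$ "interleaved" in this way, checked by simulating the rules \eqref{eq:rules}, is the main obstacle. I would first try small cycles or complete bipartite graphs with two different seed sets.
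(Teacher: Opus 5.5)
Your Steps 1--3 are correct and are the definitional reasoning the paper relies on. The paper gives no separate proof of this observation beyond noting that $\Omega_0(G)$ is finite and that every extinction set contains all sufficiently large integers.

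The gap is Step 4, and the fix you propose does not work. The paper itself only gives the heuristic that $\hiv(G,f_0)$ depends on $f_0$. So if the last sentence is read as asserting that strict inequality actually occurs, neither the paper nor you proves it. Moreover, the examples you point forward to provably give equality.

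\textbf{The case $R=1$.} At $R=1$ a non-infected vertex, healthy or dead, becomes infected exactly when it has an infected neighbor, so $\II_{t+1}=N(\II_t)\setminus\II_t$. On a connected graph, if $\emptyset\neq\II_t\neq V(G)$ there is an edge leaving $\II_t$, so $\II_{t+1}\neq\emptyset$. Since $\II_{t+1}\cap\II_t=\emptyset$, also $\II_{t+1}\neq V(G)$. Hence every admissible state other than $\boldsymbol{0}_G$ and the all-infected state is latent at $R=1$; those two states are extinct for every $R$.

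\textbf{Consequence: $\hiv(G)\le 3$ forces equality.} If $\hiv(G)=2$, use the state of Lemma \ref{lem:R1}. If $\hiv(G)=3$, any state latent at $R=2$ is also latent at $R=1$. This rules out all cycles, the graph of Figure \ref{fig:example3intervals}, and the wheels with $\hiv(W_n)=3$ in Table \ref{tab:extinction-sets-wheels}.

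\textbf{The other families.} For $F_k$, the single initial state of Table \ref{tab:flags1} is latent for every $R\le k$, so $\hiv(F_k,f_0)=k+1=\hiv(F_k)$. The unbounded gap of Theorem \ref{thm:unbounded-gap} concerns $\HIV-\hiv$, which is a different quantity. For $K_n$ and $K_{m,n}$, one checks that one of the states used in the lower-bound arguments of Section \ref{sec:graphFamilies} is latent for every $R<\hiv$. So searching among cycles or complete bipartite graphs cannot succeed.

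\textbf{What a witness needs.} Two states with interleaved extinction sets are not enough. Let $h=\max_{f_0}\hiv(G,f_0)$. You need \emph{every} initial state to be extinct at some $R\le h$, while no single $R\le h$ works for all of them. By the above, any witness must have $h\ge 3$ and $\hiv(G)\ge 4$.

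You can either exhibit such a graph, for instance by exhaustive search over small graphs with $\hiv(G)\ge 4$, or read ``may be strict'' as saying only that the definitions do not force equality. Under the second reading, Steps 1--3 are all that can be proved formally.
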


For a fixed trajectory in the system, let $\HH_t=\{v\in V(G):f_t(v)=0\},$ $\II_t=\{v\in V(G):f_t(v)=1\}$ and $ \DD_t=\{v\in V(G):f_t(v)=2\}$ be the sets of healthy, infected, and dead vertices at time $t$, respectively. We distinguish between two types of healthy vertices. Let $\Ha_t = \{v\in\HH_t:f_{t'}(v)=0\text{ for every }0\leq t'\leq t\}$ be the set of vertices that have never been infected up to time $t$, and let $\CC_t = \{v\in\HH_t:f_{t'}(v)=1\text{ for some }0\leq t'<t\}$ be the set of cured vertices at time $t$ (the healthy vertices that have been infected at some time $t'<t$). Therefore, $V(G)=\Ha_t\cup\II_t\cup\DD_t\cup\CC_t$ is a partition of $V(G)$ for every $t\geq 0$.

If $X,Y\in\{\Ha,\II,\DD,\CC\}$, we say that an edge $uv$ is of type $(XY)_t$ if one endpoint belongs to $X_t$ and the other belongs to $Y_t$.

\begin{lemma}\label{Lemma:Estructura}
Let $G$ be a graph and let $R\geq\Delta(G)$. Then, for every admissible initial state and every time $t\geq 0$, there are no edges of type $(\Ha\DD)_t$, $(\Ha\CC)_t$, or $(\II\CC)_t$.
\end{lemma}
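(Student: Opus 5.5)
The proof is by induction on $t$, tracking how each of the four classes $\Ha,\II,\DD,\CC$ at time $t+1$ is fed by the classes at time $t$.

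\textbf{Base case.} At $t=0$ we have $\DD_0=\CC_0=\emptyset$, since $f_0$ is admissible and no vertex has been infected before time $0$. So no edge of the three forbidden types exists.

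\textbf{Transition table.} Assume no edge of type $(\Ha\DD)_t$, $(\Ha\CC)_t$ or $(\II\CC)_t$ exists. Read off from \eqref{eq:rules}:
\begin{itemize}
\item $u\in\Ha_{t+1}$ iff $u\in\Ha_t$ and $u$ has no neighbor in $\II_t$;
\item $\DD_{t+1}=\II_t$;
\item $v\in\CC_{t+1}$ iff either $v\in\CC_t$ and $v$ has no neighbor in $\II_t$, or $v\in\DD_t$ and $v$ has fewer than $R$ neighbors in $\II_t$;
\item $u\in\II_{t+1}$ iff $u\in\Ha_t\cup\CC_t$ and $u$ has a neighbor in $\II_t$, or $u\in\DD_t$ and $u$ has at least $R$ neighbors in $\II_t$.
\end{itemize}
The hypothesis $R\geq\Delta(G)$ enters only through the last case. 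A dead vertex $u$ can be reinfected only if $d(u)\geq R\geq\Delta\geq d(u)$, which forces \emph{every} neighbor of $u$ to lie in $\II_t$.

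\textbf{Type $(\Ha\DD)_{t+1}$.} Take an edge $uv$ with $u\in\Ha_{t+1}$ and $v\in\DD_{t+1}=\II_t$. Then $u$ has an infected neighbor at time $t$, contradicting $u\in\Ha_{t+1}$.

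\textbf{Type $(\Ha\CC)_{t+1}$.} Here $u\in\Ha_t$ and $v\in\CC_t\cup\DD_t$. This edge would be of type $(\Ha\CC)_t$ or $(\Ha\DD)_t$, both excluded by the induction hypothesis.

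\textbf{Type $(\II\CC)_{t+1}$.} Take an edge $uv$ with $u\in\II_{t+1}$ and $v\in\CC_{t+1}$, so $v\in\CC_t\cup\DD_t$. Split on the origin of $u$.
\begin{itemize}
\item If $u\in\Ha_t$, then $uv$ is an edge of type $(\Ha\CC)_t$ or $(\Ha\DD)_t$, which is excluded.
\item If $u\in\CC_t$, then $u$ has a neighbor in $\II_t$, so some edge of type $(\II\CC)_t$ exists. This is excluded, so in fact cured vertices are never reinfected.
\item If $u\in\DD_t$, then by the observation above all neighbors of $u$ lie in $\II_t$. In particular $v\in\II_t$, which is incompatible with $v\in\CC_t\cup\DD_t$.
\end{itemize}

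This closes the induction. The main obstacle is the third type: one has to notice that the induction hypothesis on $(\II\CC)_t$ itself rules out reinfection of cured vertices. One also needs $R\geq\Delta$ to force a reinfected dead vertex to have only infected neighbors; everything else is direct bookkeeping.
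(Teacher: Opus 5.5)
Your proof is correct and follows essentially the same route as the paper. The $(\II\CC)$ case uses the same three-way split on the previous state of $u$, with $R\geq\Delta(G)$ used exactly to rule out reinfection of a dead vertex that has a non-infected neighbor. The differences are cosmetic: you run a forward induction on $t$ where the paper takes a minimal bad time, and the paper disposes of the $(\Ha\DD)$ and $(\Ha\CC)$ types directly, by looking back to the time $t'$ at which $v$ was infected, rather than inductively.
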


\begin{proof}
Since $\DD_0=\CC_0=\emptyset$, none of the edge types $(\Ha\DD)_0$, $(\Ha\CC)_0$, or $(\II\CC)_0$ occurs at time $0$.

Suppose first that, for some $t>0$, there is an edge $uv$ of type $(\Ha\DD)_t$ or $(\Ha\CC)_{t}$, with $u\in \Ha_{t}$. Since $v\in \DD_{t}$ or $v\in \CC_{t}$, there exists $t'<t$ for which $v\in \II_{t'}$ and $u\in \Ha_{t'}$. Thus, $u$ has an infected neighbor at time $t'$, namely $v$, and consequently, $u\in \II_{t'+1}$. Since $t'+1\le t$, it contradicts the fact that $u\in \Ha_{t}$.

Now, suppose that $t>0$ is the first time at which an edge $uv$ of type $(\II\CC)_t$ occurs, with $u\in\II_t$ and $v\in\CC_t$. By \eqref{eq:rules}, $u\in\Ha_{t-1}\cup\DD_{t-1}\cup\CC_{t-1}$ and $v\in\DD_{t-1}\cup\CC_{t-1}.$ If $u\in \Ha_{t-1}$, this implies that $uv$ is an edge of type $(\Ha\DD)_{t-1}$ or $(\Ha\CC)_{t-1}$, but we already proved the existence of these type of edges cannot occur. If $u\in \CC_{t-1}$, then $u$ can become infected at time $t$ only if it has an infected neighbor at time $t-1$, which would produce an edge of type $(\II\CC)_{t-1}$, contradicting the minimality of $t$. Finally, we consider the case when $u\in \DD_{t-1}$. Since $v$ is not infected at time $t-1$ and $uv\in E(G)$, the vertex $u$ has at most $d(u)-1\le \Delta(G)-1<R $ infected neighbors at time $t-1$. Therefore $u$ becomes healthy, rather than infected, at time $t$, contradicting $u\in \II_{t}$. Hence no edge of type $(\II\CC)_{t}$ can occur.
\end{proof}

In Figure \ref{fig:aristasmalas} we have a graph with $\Delta=4$. When $R=\Delta=4$, every edge satisfies the conclusion of Lemma \ref{Lemma:Estructura}. The state sequence when $R=2$ contains the edge $v_{0}v_{4}$ of type $(\II\CC)_{3}$.

\begin{observation}\label{obs:cured}
Let $G$ be a graph and let $R\geq\Delta(G)$. If $v\in\CC_{t_0}$, then $v\in\CC_t$ for every $t\geq t_0.$ Indeed, by Lemma \ref{Lemma:Estructura}, a cured vertex has no infected neighbors and therefore remains healthy. Furthermore, if $R\geq\Delta(G)+1$ and $v\in\II_t$, then $ v\in\DD_{t+1} $ and $v\in\CC_{t+2}.$
\end{observation}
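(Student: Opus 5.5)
The plan is to prove the two assertions of Observation \ref{obs:cured} separately. The first uses Lemma \ref{Lemma:Estructura}. The second needs only the update rules \eqref{eq:rules} and the bound $R>\Delta(G)$.

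\textbf{First assertion} ($R\ge\Delta(G)$). I will argue by induction on $t\ge t_0$; the base case $t=t_0$ is the hypothesis. Suppose $v\in\CC_t$.
\begin{enumerate}
\item By Lemma \ref{Lemma:Estructura} there is no edge of type $(\II\CC)_t$, so every neighbor of $v$ lies outside $\II_t$. Hence $d_{t,\II}(v)=0$.
\item Since $f_t(v)=0$ and $d_{t,\II}(v)=0$, the first line of \eqref{eq:rules} gives $f_{t+1}(v)=0$, so $v\in\HH_{t+1}$.
\item Because $v\in\CC_t$, there is some $t'<t$ with $f_{t'}(v)=1$. Since $t'<t+1$, the same witness shows that $v\in\CC_{t+1}$ rather than $\Ha_{t+1}$.
\end{enumerate}
This closes the induction.

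\textbf{Second assertion} ($R\ge\Delta(G)+1$). Let $v\in\II_t$.
\begin{enumerate}
\item The last line of \eqref{eq:rules} gives $f_{t+1}(v)=2$, that is, $v\in\DD_{t+1}$.
\item At time $t+1$ we have $d_{t+1,\II}(v)\le d(v)\le\Delta(G)<R$. The second line of \eqref{eq:rules} then gives $f_{t+2}(v)=0$.
\item Since $f_t(v)=1$ with $t<t+2$, the vertex $v$ belongs to $\CC_{t+2}$.
\end{enumerate}
Combined with the first assertion (which applies because $R\ge\Delta(G)+1>\Delta(G)$), $v$ stays cured from time $t+2$ onward.

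There is no real obstacle here. The only point needing care is the bookkeeping between $\Ha$ and $\CC$: a healthy vertex at a later time must be placed in $\CC$ rather than $\Ha$, and this follows because the defining witness time $t'$ persists as $t$ increases.
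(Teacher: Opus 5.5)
Your proof is correct and follows the paper's reasoning. The first assertion uses the absence of $(\II\CC)_t$ edges from Lemma \ref{Lemma:Estructura}, and the second follows directly from the update rules, since a dead vertex has at most $\Delta(G)<R$ infected neighbors; your explicit induction and the $\Ha$ versus $\CC$ bookkeeping just spell out what the paper states in one line.
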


The previous result gives a useful interpretation of the dynamics. Under $R\geq\Delta(G)+1$, each vertex can move only through the sequence 
\begin{equation}\label{eq:wellMoveDiagram}
    \Ha\longrightarrow\II \longrightarrow\DD\longrightarrow\CC,    
\end{equation}
possibly remaining in $\Ha$ forever, and no vertex can move backwards. Moreover, for any $R\in\mathbb{Z}^+$, this dynamics can only be broken by a vertex in $\DD_t\cup\II_{t+1}$. 

\begin{proposition}\label{prop:reinfection}
Let $G$ be a finite graph, let $f_0\in\Omega_0(G)$, and let $R\in\mathbb{Z}^{+}$. If $R\in\mathcal{L}(G,f_0)$, then there exist a vertex $v$ and a time $t$ such that $v\in \DD_{t}\cap \II_{t+1}$.
\end{proposition}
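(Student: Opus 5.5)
We argue by contraposition: assuming that no vertex ever passes from dead to infected, that is, $\DD_t\cap\II_{t+1}=\emptyset$ for every $t\geq 0$, we show that $R\in\mathcal{E}(G,f_0)$. Under this assumption, every dead vertex at time $t$ is healthy at time $t+1$. By \eqref{eq:rules}, an infected vertex always dies at the next step. Hence every infected vertex at time $t$ is healthy at time $t+2$.

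The key step is a counting argument showing that each vertex is infected at most once. Suppose $v\in\II_{t}$ and later $v\in\II_{s}$ for some $s>t$, with $s$ the first such time. Then $v\in\DD_{t+1}$, and by assumption $v\notin\II_{t+2}$, so $v\in\HH_{t+2}$, that is, $v\in\CC_{t+2}$. For $v$ to become infected again, it must be a healthy vertex with an infected neighbor. So a reinfection of a cured vertex is possible in principle, and the naive monotone picture \eqref{eq:wellMoveDiagram} does not follow for free when $R<\Delta(G)$. This is the main obstacle: Lemma~\ref{Lemma:Estructura} is only available for $R\geq\Delta(G)$, while the proposition is stated for every $R$.

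To handle it, I would not try to prove that vertices are infected once. Instead I would use finiteness and the absence of the transition $\DD\to\II$ to get a monotone quantity. The first idea is the potential $\Phi_t=|\II_t|+|\DD_t|$. This may fail, since healthy vertices can become infected in large numbers. A better route uses the deterministic finite dynamics: the trajectory $(f_t)$ is eventually periodic, and latency means the periodic orbit is not $\{\boldsymbol{0}_G\}$. Take a time $t$ in the periodic part with some $v\in\II_t$. Then $v\in\DD_{t+1}$, and by assumption $v\in\HH_{t+2}$. Periodicity forces $v$ to be infected again at some later time in the cycle. In particular, some vertex is infected at some time $s$ in the cycle while all its neighbors were non-infected at time $s-2$.

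I would then trace infection backwards inside the periodic orbit. Each infected vertex at time $s$ was healthy at time $s-1$ (since $\DD\to\II$ never happens) and had an infected neighbor at time $s-1$. This gives a backward chain $v_s,v_{s-1},\dots$ of adjacent infected vertices, $v_{k}\in\II_k$, with consecutive vertices distinct. Along this chain, $v_k$ was healthy at time $k-1$, whereas $v_{k-1}$ was infected at time $k-1$ and dead at time $k$, so no vertex repeats at consecutive steps. I expect the contradiction to come from parity or timing. A vertex $v_{k-1}$ infected at time $k-1$ infects $v_k$, which must itself have been healthy at $k-1$ and dead or infected at $k-2$. Because vertices spend exactly one step dead and then at least one step healthy, I would try to show that the infection front must move strictly forward in a way that cannot close into a cycle in a finite graph, yielding the required contradiction.

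If this timing argument resists, the fallback is to exhibit a vertex where this fails. Any periodic nonzero orbit requires some vertex to be infected repeatedly. Between two infections, that vertex passes $\II\to\DD\to\HH\to\II$, which forces an infected neighbor at the intermediate healthy time. I would attempt to show that the first vertex of the whole orbit, ordered by its last infection time, cannot have such a neighbor unless some neighbor was reinfected directly from $\DD$.
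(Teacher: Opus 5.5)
Your proposal has a genuine gap. After correctly noting that, under the contrapositive hypothesis, every vertex moves $\II\to\DD\to\HH$, you set aside the ``infected at most once'' route and try to get a contradiction from the periodic part of the trajectory. That cannot work, because nontrivial periodic orbits with no $\DD\to\II$ transition do exist in the full state space. In $\mathcal D_2(K_3)$ (Figure~\ref{fig:stateK3}), the cycle $(0,1,2)\to(1,2,0)\to(2,0,1)\to(0,1,2)$ is such an orbit: the dead vertex always has exactly one infected neighbor, so it is never reinfected. Likewise, a single travelling wave (one infected vertex followed by one dead vertex) rotates forever on $C_n$ for every $R\geq 2$. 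So the claim that the infection front ``cannot close into a cycle'' is false for an orbit considered in isolation, and a backward chain built inside the periodic orbit has nothing to contradict. What separates the proposition from these examples is that the trajectory starts at an admissible state, with $\DD_0=\CC_0=\emptyset$, and your argument never uses this.

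The missing idea is that the monotone picture \eqref{eq:wellMoveDiagram} does hold for every $R$ under your hypothesis. In the proof of Lemma~\ref{Lemma:Estructura}, the assumption $R\geq\Delta(G)$ is used only to exclude the case $u\in\DD_{t-1}\cap\II_t$, and that is exactly what you are assuming never happens.

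Concretely, let $s$ be the first time at which some $v\in\CC_s$ has a neighbor $u\in\II_s$; then $s\geq 1$, since $\CC_0=\emptyset$. By hypothesis $u\in\HH_{s-1}$.
\begin{itemize}
\item If $u\in\CC_{s-1}$, then $u$ had an infected neighbor at time $s-1$, contradicting the minimality of $s$.
\item If $u\in\Ha_{s-1}$, note that $v$ lies in $\DD_{s-1}\cup\CC_{s-1}$, so it was infected at some $t'\leq s-2$. At that time $u\in\Ha_{t'}$, which forces $u\in\II_{t'+1}$ with $t'+1\leq s-1$, again a contradiction.
\end{itemize}
Hence cured vertices stay cured and each vertex is infected at most once. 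The state is therefore $\boldsymbol{0}_G$ two steps after the last infection event, i.e.\ $R\in\mathcal{E}(G,f_0)$. Your fallback (``ordered by its last infection time'') gestures at this minimal-time argument, but it only works when anchored at time $0$ along the actual trajectory, not within the periodic orbit. This is the argument behind the paper's short proof.
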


\begin{proof}
    Suppose that $\DD_t\cap \II_{t+1}=\varnothing$ for every $t\geq0$. Then no dead vertex is ever reinfected. Consequently, for any vertex $v$ its neighbors are contained in the same type set or in an adjacent one in Diagram \ref{eq:wellMoveDiagram}. Therefore, vertices in $\CC$ can not be infected and $R\not\in\mathcal{L}(G,f_0)$.
\end{proof}

Although $\DD_{t}\cap \II_{t+1}=\emptyset$ for every $t$ implies $R\in\mathcal{L}(G,f_0)$, the converse is not true. As illustrated in Figure \ref{fig:aristasmalas}, in both $R=2$ and $R=4$, there exists at least one vertex that belongs to $\DD_{t}\cap \II_{t+1}$ for some $t$. In the case $R=2$, the infection is latent (it is not difficult to notice that the vertices $v_{0}$, $v_{1}$, $v_{3}$ and $v_{5}$ will be switching between the infected and dead states for every $t\ge 2$). For $R=4$, even when $v_3\in \DD_{1}\cap \II_{2}$, the infection becomes extinct.

\begin{center}
\begin{figure}[ht]
\begin{tikzpicture}[scale=0.74, transform shape,every node/.style={inner sep=1pt, font=\small}]
\draw (19.5,1.5) node {$\mathbf{R=2}$};
\begin{scope}[shift={(0,0)}]
    \draw (0.5,4)--(0.5,-2.5) [gray, dashed];
    \draw (1.5,4)--(1.5,-2.5) [gray, dashed];
    \draw (2.5,4)--(2.5,-2.5) [gray, dashed];
    \draw (0,3.5) node {$\Ha_{0}$};
    \draw (1,3.5) node {$\II_{0}$};
    \draw (2,3.5) node {$\DD_{0}$};
    \draw (3,3.5) node {$\CC_{0}$};
    \node (v0) at (0,2) [blue] {$\bullet$};
    \node (v1) at (1,1) [red] {$\bullet$};
    \node (v2) at (0,1) [blue] {$\bullet$};
    \node (v3) at (1,-1) [red] {$\bullet$};
    \node (v4) at (0,0) [blue] {$\bullet$};
    \node (v5) at (0,-1) [blue] {$\bullet$};
    \node (v6) at (0,-2) [blue] {$\bullet$};
    \draw (v0) [above] node {$v_{0}$};
    \draw (v2) [above] node {$v_{2}$};
    \draw (v4) [below] node {$v_{4}$};
    \draw (v5) [below] node {$v_{5}$};
    \draw (v6) [below] node {$v_{6}$};
    \draw (v1) [above] node {$v_{1}$};
    \draw (v3) [below] node {$v_{3}$};
    \draw (v1)--(v0);
    \draw (v1)--(v2);
    \draw (v1)--(v5);
    \draw (v3)--(v0);
    \draw (v3)--(v4);
    \draw (v3)--(v5);
    \draw (v3)--(v6);
    \draw (v0) .. controls (-0.66,1.5) and (-0.66,0.5) .. (v4);
    \draw (v0) .. controls (-1,1.5) and (-1,-0.5) .. (v5);
    \draw (v2)--(v4);
    \draw (v2) .. controls (-0.66,0.5) and (-0.66,-0.5) .. (v5);
    \draw (v2) .. controls (-1,0.5) and (-1,-1.5) .. (v6);
\end{scope}
\begin{scope}[shift={(5,0)}]
	\draw (0.5,4)--(0.5,-2.5) [gray, dashed];
	\draw (1.5,4)--(1.5,-2.5) [gray, dashed];
	\draw (2.5,4)--(2.5,-2.5) [gray, dashed];
	\draw (0,3.5) node {$\Ha_{1}$};
	\draw (1,3.5) node {$\II_{1}$};
	\draw (2,3.5) node {$\DD_{1}$};
	\draw (3,3.5) node {$\CC_{1}$};
	\node (v0) at (1,2) [red] {$\bullet$};
	\node (v1) at (2,1) {$\bullet$};
	\node (v2) at (1,1) [red] {$\bullet$};
	\node (v3) at (2,-1) {$\bullet$};
	\node (v4) at (1,0) [red] {$\bullet$};
	\node (v5) at (1,-1) [red] {$\bullet$};
	\node (v6) at (1,-2) [red] {$\bullet$};
	\draw (v0) [above] node {$v_{0}$};
	\draw (v2) [above] node {$v_{2}$};
	\draw (v4) [below] node {$v_{4}$};
	\draw (v5) [below] node {$v_{5}$};
	\draw (v6) [below] node {$v_{6}$};
	\draw (v1) [above] node {$v_{1}$};
	\draw (v3) [below] node {$v_{3}$};
	\draw (v1)--(v0);
	\draw (v1)--(v2);
	\draw (v1)--(v5);
	\draw (v3)--(v0);
	\draw (v3)--(v4);
	\draw (v3)--(v5);
	\draw (v3)--(v6);
	\draw (v0) .. controls (0.33,1.5) and (0.33,0.5) .. (v4);
	\draw (v0) .. controls (0,1.5) and (0,-0.5) .. (v5);
	\draw (v2)--(v4);
	\draw (v2) .. controls (0.33,0.5) and (0.33,-0.5) .. (v5);
	\draw (v2) .. controls (0,0.5) and (0,-1.5) .. (v6);
\end{scope}
\begin{scope}[shift={(10,0)}]
	\draw (0.5,4)--(0.5,-2.5) [gray, dashed];
	\draw (1.5,4)--(1.5,-2.5) [gray, dashed];
	\draw (2.5,4)--(2.5,-2.5) [gray, dashed];
	\draw (0,3.5) node {$\Ha_{2}$};
	\draw (1,3.5) node {$\II_{2}$};
	\draw (2,3.5) node {$\DD_{2}$};
	\draw (3,3.5) node {$\CC_{2}$};
	\node (v0) at (2,2) [] {$\bullet$};
	\node (v1) at (1,1) [red] {$\bullet$};
	\node (v2) at (2,1) [] {$\bullet$};
	\node (v3) at (1,-1) [red] {$\bullet$};
	\node (v4) at (2,0) [] {$\bullet$};
	\node (v5) at (2,-1) [] {$\bullet$};
	\node (v6) at (2,-2) [] {$\bullet$};
	\draw (v0) [above] node {$v_{0}$};
	\draw (v2) [above] node {$v_{2}$};
	\draw (v4) [below] node {$v_{4}$};
	\draw (v5) [below] node {$v_{5}$};
	\draw (v6) [below] node {$v_{6}$};
	\draw (v1) [above] node {$v_{1}$};
	\draw (v3) [below] node {$v_{3}$};
	\draw (v1)--(v0);
	\draw (v1)--(v2);
	\draw (v1)--(v5);
	\draw (v3)--(v0);
	\draw (v3)--(v4);
	\draw (v3)--(v5);
	\draw (v3)--(v6);
	\draw (v0) .. controls (2.66,1.5) and (2.66,0.5) .. (v4);
	\draw (v0) .. controls (3,1.5) and (3,-0.5) .. (v5);
	\draw (v2)--(v4);
	\draw (v2) .. controls (2.66,0.5) and (2.66,-0.5) .. (v5);
	\draw (v2) .. controls (3,0.5) and (3,-1.5) .. (v6);
\end{scope}
\begin{scope}[shift={(15,0)}]
	\draw (0.5,4)--(0.5,-2.5) [gray, dashed];
	\draw (1.5,4)--(1.5,-2.5) [gray, dashed];
	\draw (2.5,4)--(2.5,-2.5) [gray, dashed];
	\draw (0,3.5) node {$\Ha_{3}$};
	\draw (1,3.5) node {$\II_{3}$};
	\draw (2,3.5) node {$\DD_{3}$};
	\draw (3,3.5) node {$\CC_{3}$};
	\node (v0) at (1,2) [red] {$\bullet$};
	\node (v1) at (2,0.5) [] {$\bullet$};
	\node (v2) at (3,1) [blue] {$\bullet$};
	\node (v3) at (2,-1) [] {$\bullet$};
	\node (v4) at (3,0) [blue] {$\bullet$};
	\node (v5) at (1,-1) [red] {$\bullet$};
	\node (v6) at (3,-2) [blue] {$\bullet$};
	\draw (v0) [above] node {$v_{0}$};
	\draw (v2) [above] node {$v_{2}$};
	\draw (v4) [below] node {$v_{4}$};
	\draw (v5) [below] node {$v_{5}$};
	\draw (v6) [below] node {$v_{6}$};
	\draw (v1) [left] node {$v_{1}$};
	\draw (v3) [below] node {$v_{3}$};
	\draw (v1)--(v0);
	\draw (v1)--(v2);
	\draw (v1)--(v5);
	\draw (v3)--(v0);
	\draw (v3)--(v4);
	\draw (v3)--(v5);
	\draw (v3)--(v6);
	\draw (v0)--(v4);
	\draw (v0) .. controls (0.33,1.5) and (0.33,-0.5) .. (v5);
	\draw (v2)--(v4);
	\draw (v2)--(v5);
	\draw (v2) .. controls (3.66,0.5) and (3.66,-1.5) .. (v6);
\end{scope}
\end{tikzpicture}

\vspace{1pc}

\begin{tikzpicture}[scale=0.74, transform shape,every node/.style={inner sep=1pt, font=\small}]
\draw (19.5,1.5) node {$\mathbf{R=\Delta=4}$};
\begin{scope}[shift={(0,0)}]
	\draw (0.5,4)--(0.5,-2.5) [gray, dashed];
	\draw (1.5,4)--(1.5,-2.5) [gray, dashed];
	\draw (2.5,4)--(2.5,-2.5) [gray, dashed];
	\draw (0,3.5) node {$\Ha_{0}$};
	\draw (1,3.5) node {$\II_{0}$};
	\draw (2,3.5) node {$\DD_{0}$};
	\draw (3,3.5) node {$\CC_{0}$};
	\node (v0) at (0,2) [blue] {$\bullet$};
	\node (v1) at (1,1) [red] {$\bullet$};
	\node (v2) at (0,1) [blue] {$\bullet$};
	\node (v3) at (1,-1) [red] {$\bullet$};
	\node (v4) at (0,0) [blue] {$\bullet$};
	\node (v5) at (0,-1) [blue] {$\bullet$};
	\node (v6) at (0,-2) [blue] {$\bullet$};
	\draw (v0) [above] node {$v_{0}$};
	\draw (v2) [above] node {$v_{2}$};
	\draw (v4) [below] node {$v_{4}$};
	\draw (v5) [below] node {$v_{5}$};
	\draw (v6) [below] node {$v_{6}$};
	\draw (v1) [above] node {$v_{1}$};
	\draw (v3) [below] node {$v_{3}$};
	\draw (v1)--(v0);
	\draw (v1)--(v2);
	\draw (v1)--(v5);
	\draw (v3)--(v0);
	\draw (v3)--(v4);
	\draw (v3)--(v5);
	\draw (v3)--(v6);
	\draw (v0) .. controls (-0.66,1.5) and (-0.66,0.5) .. (v4);
	\draw (v0) .. controls (-1,1.5) and (-1,-0.5) .. (v5);
	\draw (v2)--(v4);
	\draw (v2) .. controls (-0.66,0.5) and (-0.66,-0.5) .. (v5);
	\draw (v2) .. controls (-1,0.5) and (-1,-1.5) .. (v6);
\end{scope}
\begin{scope}[shift={(5,0)}]
	\draw (0.5,4)--(0.5,-2.5) [gray, dashed];
	\draw (1.5,4)--(1.5,-2.5) [gray, dashed];
	\draw (2.5,4)--(2.5,-2.5) [gray, dashed];
	\draw (0,3.5) node {$\Ha_{1}$};
	\draw (1,3.5) node {$\II_{1}$};
	\draw (2,3.5) node {$\DD_{1}$};
	\draw (3,3.5) node {$\CC_{1}$};
	\node (v0) at (1,2) [red] {$\bullet$};
	\node (v1) at (2,1) {$\bullet$};
	\node (v2) at (1,1) [red] {$\bullet$};
	\node (v3) at (2,-1) {$\bullet$};
	\node (v4) at (1,0) [red] {$\bullet$};
	\node (v5) at (1,-1) [red] {$\bullet$};
	\node (v6) at (1,-2) [red] {$\bullet$};
	\draw (v0) [above] node {$v_{0}$};
	\draw (v2) [above] node {$v_{2}$};
	\draw (v4) [below] node {$v_{4}$};
	\draw (v5) [below] node {$v_{5}$};
	\draw (v6) [below] node {$v_{6}$};
	\draw (v1) [above] node {$v_{1}$};
	\draw (v3) [below] node {$v_{3}$};
	\draw (v1)--(v0);
	\draw (v1)--(v2);
	\draw (v1)--(v5);
	\draw (v3)--(v0);
	\draw (v3)--(v4);
	\draw (v3)--(v5);
	\draw (v3)--(v6);
	\draw (v0) .. controls (0.33,1.5) and (0.33,0.5) .. (v4);
	\draw (v0) .. controls (0,1.5) and (0,-0.5) .. (v5);
	\draw (v2)--(v4);
	\draw (v2) .. controls (0.33,0.5) and (0.33,-0.5) .. (v5);
	\draw (v2) .. controls (0,0.5) and (0,-1.5) .. (v6);
\end{scope}
\begin{scope}[shift={(10,0)}]
	\draw (0.5,4)--(0.5,-2.5) [gray, dashed];
	\draw (1.5,4)--(1.5,-2.5) [gray, dashed];
	\draw (2.5,4)--(2.5,-2.5) [gray, dashed];
	\draw (0,3.5) node {$\Ha_{2}$};
	\draw (1,3.5) node {$\II_{2}$};
	\draw (2,3.5) node {$\DD_{2}$};
	\draw (3,3.5) node {$\CC_{2}$};
	\node (v0) at (2,2) [] {$\bullet$};
	\node (v1) at (3,0.5) [blue] {$\bullet$};
	\node (v2) at (2,1) [] {$\bullet$};
	\node (v3) at (1,0) [red] {$\bullet$};
	\node (v4) at (2,0) [] {$\bullet$};
	\node (v5) at (2,-1) [] {$\bullet$};
	\node (v6) at (2,-2) [] {$\bullet$};
	\draw (v0) [above] node {$v_{0}$};
	\draw (v2) [above] node {$v_{2}$};
	\draw (v4) [below] node {$v_{4}$};
	\draw (v5) [below] node {$v_{5}$};
	\draw (v6) [below] node {$v_{6}$};
	\draw (v1) [above] node {$v_{1}$};
	\draw (v3) [left] node {$v_{3}$};
	\draw (v1)--(v0);
	\draw (v1)--(v2);
	\draw (v1)--(v5);
	\draw (v3)--(v0);
	\draw (v3)--(v4);
	\draw (v3)--(v5);
	\draw (v3)--(v6);
	\draw (v0) .. controls (1.33,1.5) and (1.33,0.5) .. (v4);
	\draw (v0) .. controls (1,1.5) and (1,-0.5) .. (v5);
	\draw (v2)--(v4);
	\draw (v2) .. controls (1.33,0.5) and (1.33,-0.5) .. (v5);
	\draw (v2) .. controls (1,0.5) and (1,-1.5) .. (v6);
\end{scope}
\begin{scope}[shift={(15,0)}]
	\draw (0.5,4)--(0.5,-2.5) [gray, dashed];
	\draw (1.5,4)--(1.5,-2.5) [gray, dashed];
	\draw (2.5,4)--(2.5,-2.5) [gray, dashed];
	\draw (0,3.5) node {$\Ha_{3}$};
	\draw (1,3.5) node {$\II_{3}$};
	\draw (2,3.5) node {$\DD_{3}$};
	\draw (3,3.5) node {$\CC_{3}$};
	\node (v0) at (3,2) [blue] {$\bullet$};
	\node (v1) at (3,1) [blue] {$\bullet$};
	\node (v2) at (3,0) [blue] {$\bullet$};
	\node (v3) at (2,-0.5) [] {$\bullet$};
	\node (v4) at (3,-1) [blue] {$\bullet$};
	\node (v5) at (3,-2) [blue] {$\bullet$};
	\node (v6) at (3,-3) [blue] {$\bullet$};
	\draw (v0) [right] node {$v_{0}$};
	\draw (v2) [right] node {$v_{2}$};
	\draw (v4) [right] node {$v_{4}$};
	\draw (v5) [right] node {$v_{5}$};
	\draw (v6) [right] node {$v_{6}$};
	\draw (v1) [right] node {$v_{1}$};
	\draw (v3) [left] node {$v_{3}$};
	\draw (v1)--(v0);
	\draw (v1)--(v2);
	\draw (v1) .. controls (2.33,0.5) and (2.33,-1.5) ..(v5);
	\draw (v3)--(v0);
	\draw (v3)--(v4);
	\draw (v3)--(v5);
	\draw (v3)--(v6);
	\draw (v0) .. controls (2.33,1.5) and (2.33,-0.5) .. (v4);
	\draw (v0) .. controls (2,1.5) and (2,-1.5) .. (v5);
	\draw (v2)--(v4);
	\draw (v2) .. controls (2.33,-0.5) and (2.33,-1.5) .. (v5);
	\draw (v2) .. controls (2,-0.5) and (2,-2.5) .. (v6);
\end{scope}
\end{tikzpicture}
\caption{\label{fig:aristasmalas}States sequence of a graph for $R=\Delta=4$ and $R=2$.}
\end{figure}
\end{center}
\vspace{-0.3in}

Also, by Proposition \ref{prop:reinfection} we know the time in which the infection becomes extinct if $\DD_{t}\cap \II_{t+1}=\emptyset$ for every $t$. In fact, let $f_{0}\in \Omega_{0}(G)\backslash\{\boldsymbol{0}_{G}\}$. If in a state sequence $(G,f_{0},R)$ there no exists vertices such that $v\in \DD_{t}\cap \II_{t+1}$, then each vertex $v$ is infected only at time $t_{v}=dist(v,\II_{0})$, where $dist(v,\II_{0})$ is the distance from $v$ to the set $\II_{0}$. Therefore, the infection becomes extinct at time $\max\{t_{v}:v\in V\}+2$.

\begin{theorem}\label{thm:HIVf0}
Let $G$ be a graph and let $f_0\in\Omega_0(G)$. Then $\HIV(G,f_0)\leq\Delta(G)+1.$
\end{theorem}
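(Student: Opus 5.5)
We must show that every $R\geq\Delta(G)+1$ lies in $\mathcal{E}(G,f_0)$. Then $\max\mathcal{L}(G,f_0)\leq\Delta(G)$, and so $\HIV(G,f_0)=1+\max\mathcal{L}(G,f_0)\leq\Delta(G)+1$.

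Fix $R\geq\Delta(G)+1$. The key observation is that no dead vertex can ever be reinfected. A vertex $v\in\DD_t$ has at most $d(v)\leq\Delta(G)<R$ neighbors, so $d_{t,\II}(v)<R$. By rule \eqref{eq:rules}, $v$ is then healthy at time $t+1$. Hence $\DD_t\cap\II_{t+1}=\emptyset$ for every $t\geq 0$. This step is purely local and needs neither Lemma \ref{Lemma:Estructura} nor Observation \ref{obs:cured}.

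With this in hand, the contrapositive of Proposition \ref{prop:reinfection} applies directly. Since no vertex belongs to $\DD_t\cap\II_{t+1}$ for any $t$, we get $R\notin\mathcal{L}(G,f_0)$, that is, $R\in\mathcal{E}(G,f_0)$. As this holds for all $R\geq\Delta(G)+1$, the bound follows.

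The main obstacle is that the proof of Proposition \ref{prop:reinfection} is sketchy, so I would make it self-contained by a direct monotonicity argument. Since $R\geq\Delta(G)+1$, Lemma \ref{Lemma:Estructura} and Observation \ref{obs:cured} apply. By Observation \ref{obs:cured}, every vertex follows the path $\Ha\to\II\to\DD\to\CC$, possibly staying in $\Ha$ forever, and cured vertices stay cured. In particular, each vertex is infected at most once.

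Consider the potential $\Phi(t)=3|\Ha_t|+2|\II_t|+|\DD_t|$. It is non-increasing, and it strictly decreases whenever $\II_t\cup\DD_t\neq\emptyset$, because each infected or dead vertex moves forward. So after finitely many steps $\II_t=\DD_t=\emptyset$. From then on no vertex has an infected neighbor, so $\Ha_t$ stays put and the state is all-healthy: $f_t=\boldsymbol{0}_G$. This matches the extinction time $\max_v t_v+2$ noted just before the theorem.
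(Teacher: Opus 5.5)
Your proposal is correct and takes essentially the paper's approach: you use the local fact that a dead vertex has at most $\Delta(G)<R$ infected neighbors, and Lemma~\ref{Lemma:Estructura}/Observation~\ref{obs:cured} to keep cured vertices cured, so each vertex passes through $\Ha\to\II\to\DD\to\CC$ at most once. Your potential $\Phi(t)=3|\Ha_t|+2|\II_t|+|\DD_t|$ is just a tidier way of making precise the paper's step that ``only finitely many infection events can occur''; the detour through the contrapositive of Proposition~\ref{prop:reinfection} is valid but unnecessary.
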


\begin{proof}
Let $R\geq\Delta(G)+1$. Since every vertex has at most $\Delta(G)<R$ infected neighbors, a dead vertex can never be replaced by an infected vertex. Hence every infected vertex becomes dead in one step and cured in the following step.

By Lemma \ref{Lemma:Estructura}, once a vertex becomes cured, it remains cured forever. Therefore each vertex can be infected at most once. Since $G$ has finitely many vertices, only finitely many infection events can occur. Consequently, after finitely many steps no infected vertices remain, and one step later the system reaches the all-healthy state. Thus every $R\geq\Delta(G)+1$ belongs to $\mathcal{E}(G,f_0)$, which proves the result.
\end{proof}

\begin{lemma}\label{lem:R1}
If $G$ contains at least one edge, then $1\in\mathcal{L}(G)$. Consequently, $\hiv(G)\geq 2$.
\end{lemma}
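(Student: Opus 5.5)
We need an initial state $f_0$ for which $R=1$ gives latency. We take an edge $uv$ and set $f_0(u)=f_0(v)=1$, with every other vertex healthy (any state with two adjacent infected vertices should do).

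The idea is to show that once two adjacent vertices are in opposite phases of the infected/dead cycle, they keep each other alive. We claim that at every time $t\ge 1$ there is an edge $xy$ with $x\in\II_t$ and $y\in\DD_t$. Then $\II_t\neq\emptyset$ for all $t\ge1$, so the all-healthy state $\boldsymbol{0}_G$ is never reached and $1\in\mathcal{L}(G,f_0)\subseteq\mathcal{L}(G)$.

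The key step is the propagation of this pair under $R=1$. Suppose $x\in\II_t$, $y\in\DD_t$ and $xy\in E(G)$. By \eqref{eq:rules}, $x\in\DD_{t+1}$. Since $y$ is dead and has at least one infected neighbor, namely $x$, and $R=1$, we get $y\in\II_{t+1}$. Thus $yx$ is again an infected--dead edge at time $t+1$, with the roles swapped. By induction the two endpoints alternate between infected and dead forever.

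The main obstacle is the base case, since at time $0$ there are no dead vertices. We start from $u,v\in\II_0$. At $t=1$ both are in $\DD_1$, and at $t=2$ both become healthy if neither has an infected neighbor at $t=1$. So two adjacent infected vertices do not work directly, and the construction must create an infected--dead edge at time $1$.

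To do this, we instead take $f_0(u)=1$ and $f_0(w)=0$ for all $w\neq u$, where $uv$ is any edge. At time $1$ we have $u\in\DD_1$, and $v$, being healthy with an infected neighbor, is in $\II_1$. So $vu$ is an infected--dead edge at $t=1$, and the propagation step applies from there.

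Hence $1\in\mathcal{L}(G)$. Since $\hiv(G)=\min\mathcal{E}(G)$ and $1\notin\mathcal{E}(G)$, it follows that $\hiv(G)\ge 2$.
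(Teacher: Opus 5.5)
Your argument is correct and is essentially the paper's proof: you use the same initial state (one infected endpoint $u$ of an edge $uv$, all other vertices healthy) and the same mechanism, namely that $u$ and $v$ alternate between infected and dead forever under $R=1$. Your invariant ``some edge has one endpoint in $\II_t$ and the other in $\DD_t$'' is an inductive restatement of the paper's explicit parity formulas for $f_t(u)$ and $f_t(v)$, and you can simply delete the abandoned first attempt with two adjacent infected vertices from the write-up.
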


\begin{proof}
Let $uv\in E(G)$ and consider the admissible initial state $f_0$ defined by $f_0(u)=1$ and $f_0(x)=0$ for every $x\neq u$. For $R=1$, the vertices $u$ and $v$ alternate between the infected and dead states. More precisely, $f_{2k}(u)=1$, $f_{2k+1}(u)=2$ for every $k\geq 0$, while $f_{2k+1}(v)=1$, $f_{2k+2}(v)=2$. Thus the infection never becomes extinct. Hence $1\in\mathcal{L}(G,f_{0})$, so $1\in\mathcal{L}(G)$ and $\hiv(G)\geq 2$.
\end{proof}

\begin{theorem}\label{thm:bound}
If $G$ contains at least one edge, then $2\leq\hiv(G)\leq\HIV(G)\leq\Delta(G)+1$.
\end{theorem}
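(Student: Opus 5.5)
The theorem is a chain of three inequalities. I will prove each link separately from the results already established, so the argument is mostly assembly.

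\textbf{Lower bound.} Since $G$ contains an edge, Lemma \ref{lem:R1} gives $1\in\mathcal{L}(G)$. Hence $1\notin\mathcal{E}(G)$, and so $\hiv(G)=\min\mathcal{E}(G)\geq 2$.

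\textbf{Middle inequality.} This is Observation \ref{obs:minmax}; I will recall why it holds. Let $R_0=\HIV(G)$. By definition every $R\geq R_0$ lies in $\mathcal{E}(G)$; in particular $R_0\in\mathcal{E}(G)$. Therefore $\hiv(G)=\min\mathcal{E}(G)\leq R_0$.

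\textbf{Upper bound.} I will use $\HIV(G)=\max_{f_0\in\Omega_0(G)}\HIV(G,f_0)$, which holds because $\Omega_0(G)$ is finite (Observation \ref{obs:minmax}). Theorem \ref{thm:HIVf0} gives $\HIV(G,f_0)\leq\Delta(G)+1$ for every $f_0$, and taking the maximum yields $\HIV(G)\leq\Delta(G)+1$.

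To avoid leaning on that identity, I can argue directly. For each $f_0$, the proof of Theorem \ref{thm:HIVf0} shows that every $R\geq\Delta(G)+1$ belongs to $\mathcal{E}(G,f_0)$. Intersecting over all $f_0$ gives $\{R: R\geq\Delta(G)+1\}\subseteq\mathcal{E}(G)$. Then $\HIV(G)\leq\Delta(G)+1$ follows from the minimality in its definition.

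No step is a real obstacle. The only point needing care is this last one: the pointwise bounds $\HIV(G,f_0)\leq\Delta(G)+1$ must transfer to the graph level. The intersection argument settles it, since it shows the whole tail $R\geq\Delta(G)+1$ lies in $\mathcal{E}(G)$.
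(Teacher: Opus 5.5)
Your proposal is correct and matches the paper's proof: Lemma \ref{lem:R1} for the lower bound, the definitions for $\hiv(G)\leq\HIV(G)$, and Theorem \ref{thm:HIVf0} together with $\HIV(G)=\max_{f_0\in\Omega_0(G)}\HIV(G,f_0)$ for the upper bound. Your alternative intersection argument is also a valid way to pass from the pointwise bound to the graph level.
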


\begin{proof}
The lower bound follows from Lemma \ref{lem:R1}. The inequality $\hiv(G)\leq\HIV(G)$ follows immediately from the definitions. Finally, Theorem \ref{thm:HIVf0} holds for every admissible initial state, and therefore $\HIV(G)=\max_{f_0\in\Omega_0(G)} \HIV(G,f_0)\leq\Delta(G)+1$.
\end{proof}

For bipartite graphs, the lower bound can be improved.

\begin{theorem}\label{hivBip}
Let $G$ be a bipartite graph with minimum degree $\delta(G)\geq 1$. Then $\hiv(G)\geq\delta(G)+1$.
\end{theorem}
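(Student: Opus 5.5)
Because $\hiv(G)=\min\mathcal{E}(G)$, it is enough to show that every $R\le\delta(G)$ lies in $\mathcal{L}(G)$. So we fix $1\le R\le \delta(G)$ and exhibit one admissible initial state whose infection is latent. The natural candidate uses the bipartition $V(G)=A\cup B$: take $f_0(v)=1$ for $v\in A$ and $f_0(v)=0$ for $v\in B$. Both classes are nonempty, since $\delta(G)\ge1$.

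The key step is to show by induction that the dynamics is $2$-periodic after time $0$, alternating between the two classes. We claim that for every $k\ge 0$,
\[
\II_{2k}=A,\ \HH_{2k}\supseteq B \text{ at } k=0,\qquad \II_{2k+1}=B,\ \DD_{2k+1}=A,\qquad \II_{2k+2}=A,\ \DD_{2k+2}=B .
\]
We check the base case using the rules \eqref{eq:rules}. At time $0$ every vertex of $B$ is healthy and has at least $\delta\ge1$ infected neighbours, because all its neighbours lie in $A$. Hence $B\subseteq \II_1$, while $A\subseteq\DD_1$.

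At time $1$, each $a\in A$ is dead and all of its at least $\delta\ge R$ neighbours lie in $B=\II_1$. So $a$ has at least $R$ infected neighbours and is reinfected, giving $A\subseteq\II_2$. Meanwhile $B\subseteq \DD_2$.

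At time $2$, each $b\in B$ is dead and all of its neighbours lie in $A=\II_2$. So it has $d(b)\ge\delta\ge R$ infected neighbours and becomes infected, giving $B\subseteq\II_3$ and $A\subseteq\DD_3$. From then on the configuration alternates between ($A$ infected, $B$ dead) and ($B$ infected, $A$ dead), so the induction closes.

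It follows that $\II_t\neq\emptyset$ for every $t$, so $f_t\ne\boldsymbol{0}_G$ for all $t$. Hence $R\in\mathcal{L}(G,f_0)\subseteq\mathcal{L}(G)$ for each $R\le\delta$, and therefore $\hiv(G)\ge\delta+1$.

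The only point needing care is that bipartiteness guarantees that \emph{all} neighbours of a vertex lie in the opposite class. That is exactly what allows a dead vertex to count $d(v)\ge\delta\ge R$ infected neighbours. No real obstacle remains beyond this bookkeeping, although we should note that $G$ need not be connected here: the argument is purely local, so it does not use connectivity.
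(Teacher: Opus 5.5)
Your proposal is correct and follows the paper's proof essentially verbatim: the same initial state (one partite class infected, the other healthy), the same observation that every dead vertex has all of its $d(v)\geq\delta(G)\geq R$ neighbours in the opposite, currently infected class, and hence the same perpetual alternation showing $\{1,\ldots,\delta(G)\}\subseteq\mathcal{L}(G)$. The displayed induction claim is garbled (the clause ``$\HH_{2k}\supseteq B$ at $k=0$''), but your explicit checks at times $0$, $1$ and $2$ establish the alternation, and your remark that connectivity is not needed is accurate.
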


\begin{proof}
Let $A$ and $B$ be the partite sets of $G$, and let $1\leq R\leq\delta(G)$. Consider the admissible initial state in which every vertex of $A$ is infected and every vertex of $B$ is healthy.

At time $1$, every vertex of $A$ is dead and every vertex of $B$ is infected. Since each vertex of $A$ has at least $\delta(G)\geq R$ neighbors in $B$, every vertex of $A$ is reinfected at time $2$, while every vertex of $B$ becomes dead. Analogously, at time $3$ the vertices in $A$ becomes dead and the vertices in $B$ are reinfected. Thus the two partite sets alternate indefinitely between the infected and dead states. Hence $\{1,\ldots,\delta(G)\}\subseteq\mathcal{L}(G)$, and therefore $\hiv(G)\geq\delta(G)+1$.
\end{proof}

We now characterize the graphs that attain the upper bound in Theorem \ref{thm:bound}.

\begin{theorem}\label{thm:bipartiteregular}
Let $G$ be a connected graph with at least one edge. Then $\HIV(G)=\Delta(G)+1$ if and only if $G$ is bipartite and regular. In this case, $\hiv(G)=\HIV(G)=\Delta(G)+1$.
\end{theorem}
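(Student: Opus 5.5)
The \emph{if} direction follows from results already proved. If $G$ is bipartite and $\Delta$-regular, then $\delta(G)=\Delta(G)\geq 1$, so Theorem~\ref{hivBip} gives $\hiv(G)\geq\Delta(G)+1$. Combining this with Theorem~\ref{thm:bound} yields
\[
\Delta(G)+1\leq\hiv(G)\leq\HIV(G)\leq\Delta(G)+1 .
\]
So both parameters equal $\Delta(G)+1$, which also proves the final sentence of the statement.

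For the \emph{only if} direction, assume $\HIV(G)=\Delta(G)+1$. Then $\Delta:=\Delta(G)\in\mathcal{L}(G)$, so there is some $f_0\in\Omega_0(G)$ whose trajectory under $R=\Delta$ never reaches $\boldsymbol{0}_G$. The plan is to analyse the eventual periodic behaviour of this trajectory. The state space is finite and the dynamics is deterministic, so there are $T$ and $p\geq 1$ with $f_{t+p}=f_t$ for all $t\geq T$, and the orbit is not $\{\boldsymbol{0}_G\}$.

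\textbf{Step 1: cured and never-infected vertices are frozen.} Since $R=\Delta\geq\Delta(G)$, Lemma~\ref{Lemma:Estructura} and Observation~\ref{obs:cured} apply: a cured vertex stays cured forever. The sets $\Ha_t$ are non-increasing by definition. Periodicity for $t\geq T$ then forces $\Ha_t$ and $\CC_t$ to be constant from $T$ on. Consequently, a vertex lying in $\II_t\cup\DD_t$ for some $t\geq T$ can never move to a healthy state. Such a vertex would be cured and hence remain cured forever, contradicting $f_{t+p}=f_t$.

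\textbf{Step 2: active vertices alternate.} Let $S$ be the set of vertices that lie in $\II_t\cup\DD_t$ for some $t\geq T$; $S$ is nonempty because the orbit is not trivial. By Step~1, each $v\in S$ alternates between infected and dead from time $T$ on. Hence $S$ splits as $A\cup B$, where $A$ consists of the vertices infected at the times $t\geq T$ with $t\equiv T\pmod 2$, and $B$ of those infected at the other parity.

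\textbf{Step 3: the key counting.} Take $v\in A$ and a time $t\geq T$ at which $v$ is dead, so that $v$ is reinfected at time $t+1$. Rule~\eqref{eq:rules} with $R=\Delta$ requires at least $\Delta$ infected neighbours at time $t$. Since $d(v)\leq\Delta$, this forces $d(v)=\Delta$ and every neighbour of $v$ to be infected at time $t$. All those neighbours therefore lie in $S$, and they are infected exactly when $v$ is dead, so they lie in $B$. Symmetrically, every vertex of $B$ has degree $\Delta$ and all its neighbours in $A$.

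\textbf{Step 4: conclusion.} Step~3 shows that $S$ is closed under taking neighbourhoods. Since $G$ is connected, $S=V(G)$. Therefore every vertex has degree $\Delta$, so $G$ is regular. Moreover every edge joins $A$ to $B$, so $A,B$ is a bipartition and $G$ is bipartite.

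The main obstacle is Step~1: making rigorous that, on the periodic orbit, no vertex ever passes through a healthy state. The plan is to combine the monotonicity of $\Ha_t$ and $\CC_t$, which comes from Observation~\ref{obs:cured} with $R\geq\Delta(G)$, with the equality $f_{t+p}=f_t$. The rest is the local counting of Step~3, which mirrors the idea in the proof of Proposition~\ref{prop:reinfection}.
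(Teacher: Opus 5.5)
Your proof is correct. The \emph{if} direction is the paper's argument. For the \emph{only if} direction you take a genuinely different route.

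\textbf{The paper's route.} The paper argues by contraposition. It assumes $G$ is not both bipartite and regular, fixes $R=\Delta(G)$, and shows directly that every admissible state goes extinct, in three steps:
\begin{enumerate}
\item Every vertex is infected at some time.
\item A first cured vertex appears. It is either a vertex of degree $<\Delta$, or one of two adjacent vertices on an odd cycle that are simultaneously dead once all vertices lie in $\II_t\cup\DD_t$.
\item By Lemma~\ref{Lemma:Estructura}, the cured set gains at least one vertex per step until it is all of $V(G)$.
\end{enumerate}

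\textbf{Your route.} You start from a latent trajectory for $R=\Delta$ and pass to its periodic part. Permanence of cured vertices shows the active vertices never rest. Then a saturation count finishes it: reinfection at $R=\Delta$ forces $d(v)=\Delta$ and \emph{all} neighbours infected. Hence the active set is closed under neighbourhoods, and connectivity plus the alternation two-colours it.

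\textbf{Comparison.} Your approach avoids the case split between non-regular and non-bipartite graphs. It also gives more structure: under $R=\Delta$, every latent trajectory ends in the $2$-cycle alternating between the colour classes, which is exactly the $R=\Delta$ instance of Theorem~\ref{thm:2cyles}. The paper's approach exhibits the extinction mechanism itself, a growing cured front, without appealing to eventual periodicity. It also implicitly bounds the extinction time.

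\textbf{Step 1 is easier than you fear.} The detour through constancy of $\Ha_t$ and $\CC_t$ is unnecessary. Suppose $v$ is infected or dead at some $t\ge T$ and healthy at some $t'>t$. Then $v\in\CC_{t'}$, so by Observation~\ref{obs:cured} it is healthy forever. This contradicts $f_{t+kp}(v)=f_t(v)\neq 0$.

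\textbf{One detail for Step 2.} You also need that such a $v$ is non-healthy at \emph{every} time $s\ge T$, not just at times after $t$. This follows the same way: if $f_s(v)=0$, then $f_{s+kp}(v)=0$ with $s+kp>t$, which is impossible.
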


\begin{proof}
Suppose first that $G$ is bipartite and $\Delta(G)$-regular. From Theorem \ref{hivBip} and the fact that $\delta(G)+1=\Delta(G)+1$, we have that $\delta(G)+1\le\hiv(G)\le \HIV(G)\le \Delta(G)+1.$ Therefore the equality holds for bipartite regular graphs.

Conversely, suppose that $G$ is not both bipartite and regular. We will prove that $R=\Delta(G)$ guarantees the extinction of the infection for every admissible initial state. Together with Theorem \ref{thm:HIVf0}, this will imply $\HIV(G)\leq\Delta(G).$

Fix an admissible initial state $f_0$. If $f_0=\boldsymbol{0}_g$, the result is immediate. Suppose that at least one vertex is initially infected. We first show that every vertex is infected at least once, and a cured vertex eventually appears.  

Let $S=\{v\in V(G): v\in \II_t \text{ for some }t\geq 0\}.$ We first claim that $S=V(G)$. Otherwise, since $G$ is connected and $S\neq\emptyset$, there exists an edge $xy$ with $x\in S$ and $y\notin S$. Since $x\in S$, there exists a time $t$ such that $x\in\II_t$. On the other hand, $y\notin S$ implies that $y$ has never been infected, and hence $y\in\HH_t$. Therefore, $y\in\II_{t+1}$, contradicting $y\notin S$. Thus every vertex of $G$ is infected at some time.

Assume first that $G$ is not regular. Choose a vertex $v$ with $d(v)<\Delta(G)$. By the previous claim, there exists a time $t$ such that $v\in\II_t$. If a cured vertex appears before time $t$, there is nothing to prove. Otherwise, $v\in \DD_{t+1}$ and in the following step it becomes cured since $d(v)<\Delta(G)=R$.

Now assume that $G$ is not bipartite. Then $G$ contains an odd cycle $\mathcal{C}$. If no cured vertex appears, since every vertex must be infected at least once, when that happens, that vertex remains in $I_t\cup D_t$ for each subsequent time $t$. Hence there exists a time $t_0$ for which every vertex belongs to $\II_{t_0}\cup\DD_{t_0}$. Since $\mathcal{C}$ is odd, two consecutive vertices of $\mathcal{C}$ must have the same state at time $t_{0}$. If both are dead, then each one has at most $\Delta(G)-1<R$ infected neighbors, so each one becomes cured in the next step. If both are infected, they both become dead in the next step, and the previous argument applies one step later. Thus a cured vertex eventually appears.

In any case, let $t_0$ be a time such that $\CC_{t_0}\neq\emptyset$. By Observation \ref{obs:cured}, $\CC_t\subseteq\CC_{t+1}$. Suppose that $\CC_t\neq V(G)$. Since $G$ is connected, there exists an edge joining a vertex of $\CC_t$ to at least one vertex outside of $\CC_t$, say $v$. By Lemma \ref{Lemma:Estructura}, $v\in \DD_t$. Since it has a cured neighbor, it has at most $\Delta(G)-1<R$ infected neighbors, and hence it belongs to $\CC_{t+1}$. Consequently, $\CC_t\subsetneq\CC_{t+1}$ whenever $\CC_t\neq V(G)$. Since $G$ is finite, there exists $T$ such that $\CC_T=V(G)$. Thus the infection becomes extinct for every admissible initial state when $R=\Delta(G)$, and therefore $\HIV(G)\leq\Delta(G)$. This proves the converse.
\end{proof}

The parameters $\hiv(G)$ and $\HIV(G)$ may not coincide. In fact, their difference can be arbitrarily large.

\begin{theorem}\label{thm:unbounded-gap}
For every integer $k\geq 2$, there exists a connected graph $G$ such that $\HIV(G)-\hiv(G)=k.$
\end{theorem}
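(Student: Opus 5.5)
The plan is to realize the gap inside a complete bipartite family $K_{n,m}$ with $n$ fixed and small and $m$ large. There, Theorem \ref{hivBip} pins $\hiv$ near $n+1$, while staggered latent orbits on the large side push $\HIV$ up roughly linearly in $m$. By Theorem \ref{thm:bipartiteregular}, regular bipartite graphs have gap $0$, so I deliberately take $n<m$. I would work with $n=2$: write $A=\{a_1,a_2\}$ and let $B$, with $|B|=m$, be the other side.

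\textbf{Step 1 (lower bound on $\HIV$).} I construct a latent initial state for every $R$ up to a value of order $m/3$.
\begin{itemize}
\item Partition $B$ into $B_0,B_1,B_2$. Arrange that at each time the three classes occupy the states healthy, infected, dead in rotation. Arrange also that $a_1$ and $a_2$ alternate between infected and dead out of phase, with $a_1$ infected at even times and $a_2$ at odd times.
\item Healthy vertices of $B$ always see an infected vertex of $A$, so they become infected.
\item Dead vertices of $B$ see at most $2<R$ infected neighbours, so they become healthy. This keeps the $B$-rotation going.
\item A dead $a_i$ sees exactly one infected class $B_j$. It is reinfected precisely when $|B_j|\ge R$.
\end{itemize}
Hence the orbit is periodic, with period $6$, whenever $R\le \min_j|B_j|=\lfloor m/3\rfloor$. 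One checks that the orbit is reached from an admissible initial state, with no dead vertices at time $0$, after a short transient; a few preliminary steps should suffice. This gives $\HIV(K_{2,m})\ge\lfloor m/3\rfloor+1$.

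\textbf{Step 2 (exact value of $\HIV$).} I show that for $R>\lfloor m/3\rfloor$ every trajectory dies out.
\begin{itemize}
\item A vertex of $B$ can never be reinfected from the dead state, since it has degree $2<R$ once $R\ge 3$. So each vertex of $B$ moves along $\Ha\to\II\to\DD\to\CC$ and then re-enters from $\CC$ only through infection by $A$.
\item Sustained latency therefore requires $A$ to keep being reinfected from the dead state. That needs at least $R$ infected vertices of $B$ at those times.
\item A counting argument then applies. Every vertex of $B$ that is infected at time $t$ must be dead at $t+1$ and healthy at $t+2$. Consequently the sets $\II_t\cap B$, $\II_{t+1}\cap B$ and $\II_{t+2}\cap B$ are pairwise disjoint.
\item One also needs to handle the case where both $a_1,a_2$ are in phase. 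I expect that case to force extinction, or to behave like the $K_{2,m}$ alternating orbit only for $R\le 2$.
\end{itemize}
From this I would derive $\HIV(K_{2,m})=\lfloor m/3\rfloor+1$. If the disjointness bound only yields something weaker, such as $\lfloor m/2\rfloor$, I would instead take that as the exact value, realized by a period-$4$ pattern.

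\textbf{Step 3 ($\hiv$) and main obstacle.} Theorem \ref{hivBip} gives $\hiv\ge 3$. I then need to show extinction at $R=3$ for every initial state, which means ruling out all the small-period patterns when $R=3$. I expect Step 3, together with the exactness in Step 2, to be the main obstacle: the statement needs the gap to be \emph{exactly} $k$, not merely at least $k$. If $\hiv$ is exactly $3$ and $\HIV=\lfloor m/3\rfloor+1$, then choosing $m=3(k+2)$ gives gap $k$. Should $\hiv$ turn out not to be $3$ at some values of $m$, my fallback is a monotonicity argument. The gap would change by at most one as $m$ increases by one, since $\hiv$ is constant and $\HIV$ increases by at most one. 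The gap also tends to infinity and is small for small $m$. Every integer $k\ge 2$ would then be attained by some $m$.
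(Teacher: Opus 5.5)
The approach cannot work, because complete bipartite graphs always have gap $0$. The paper proves in Section~\ref{sec:graphFamilies} that $\HIV(K_{m,n})=\hiv(K_{m,n})=\max\{m+1,\lfloor n/2\rfloor+1\}$, so $\HIV(K_{2,m})=\hiv(K_{2,m})$ for every $m$.

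Concretely, your Step~3 fails once $m\ge 6$. Take the admissible state with $A$ healthy and $\lceil m/2\rceil$ vertices of $B$ infected. It enters exactly the period-$4$ pattern you mention, and it does so for \emph{every} $3\le R\le\lfloor m/2\rfloor$, in particular $R=3$. Combined with Theorem~\ref{hivBip} and the paper's extinction argument for larger $R$, this gives $\mathcal{L}(K_{2,m})=\{1,\dots,\lfloor m/2\rfloor\}$, which is an initial segment.

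Theorem~\ref{hivBip} is only a lower bound and does nothing to keep $\hiv$ small. Your fallback fails for the same reason: $\hiv(K_{2,m})$ is not constant, it grows together with $\HIV(K_{2,m})$.

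Step~1 already conflicts with Step~3. Its orbit only needs $2\le R\le\min_j|B_j|$, and this range contains $3$ once $m\ge 9$. Moreover, that period-$6$ orbit is not reachable from any admissible state. Apart from its cycle predecessor, each cycle state has exactly one other predecessor, obtained by making the dead $a_i$ healthy. That state has no predecessor at all. Both vertices of $A$ are non-dead in it, so neither was infected one step earlier, yet its infected $B$-class needs an infected vertex of $A$ one step earlier.

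What is missing is a mechanism that puts a hole in $\mathcal{L}(G)$. In all your $K_{2,m}$ orbits, a dead vertex that gets cured sees at most two infected neighbours. So apart from $R\ge 3$, the only constraints are upper bounds $R\le d$, and the same trajectory occurs for every smaller $R\ge 3$. Such witnesses can only produce latency sets $\{1,\dots,L\}$, which forces $\hiv=\HIV$. A positive gap needs a latent trajectory in which some dead vertex with \emph{many} infected neighbours must be cured. That is a genuine lower bound on $R$, so the trajectory breaks down at a smaller value, and you then need a proof that this smaller value is an extinction value.

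The paper does this with the graph $F_k$: an edge $xy$, a pendant vertex $z$ at $y$, and $2k-1$ vertices $u_i$ adjacent to both $x$ and $y$.
\begin{itemize}
\item For $k+2\le R\le 2k$, it uses an $R$-dependent initial state, with $y$ and $2k-R+1$ of the $u_i$ infected. This settles into a period-$3$ wave in which every vertex cycles healthy$\to$infected$\to$dead$\to$healthy.
\item The wave survives only if the dead vertex $y$, which then sees $2k-R+2$ infected neighbours, is cured. This happens exactly when $R\ge k+2$.
\item Separate orbits give latency for $R\le k$.
\item An exhaustive case analysis (Table~\ref{tab:flags2}) shows $k+1\in\mathcal{E}(F_k)$.
\item Theorem~\ref{thm:bipartiteregular} gives $\HIV(F_k)\le\Delta(F_k)=2k+1$.
\end{itemize}
Hence $\hiv(F_k)=k+1$ and $\HIV(F_k)=2k+1$.
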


\begin{proof}
Let $F_k$ be the graph with vertices $u_1,\ldots,u_{2k-1},x,y,z$ and edges $E(F_k)=\{xy,yz\}\cup\{xu:u\in U\} \cup\{yu:u\in U\},$ where $U=\{u_1,\ldots,u_{2k-1}\}$ (see Figure \ref{fig:Fk}). Notice that $d(y)=2k+1$, $d(x)=2k$, $d(u_i)=2$ and $d(z)=1$, and therefore $\Delta(F_k)=2k+1.$
We will prove that $\hiv(F_k)=k+1$ and $\HIV(F_k)=2k+1$.

\begin{figure}[ht]
\centering
\begin{tikzpicture}[scale=0.8]
    \node (x) at (0,2) {$\bullet$};
    \node (y) at (0,0) {$\bullet$};
    \node (z) at (0,-2) {$\bullet$};
    \node (u1) at (1,1) {$\bullet$};
    \node (u2) at (2,1) {$\bullet$};
    \node (uu) at (4,1) {$\bullet$};
    \draw (3,1) node {$\dots$};
    \draw (x) [above left] node {$x$};
    \draw (y) [below left] node {$y$};
    \draw (z) [left] node {$z$};
    \draw (u1) [right] node {$u_1$};
    \draw (u2) [right] node {$u_2$};
    \draw (uu) [right] node {$u_{2k-1}$};
    \draw (z)--(y)--(x)--(u1)--(y)--(u2)--(x)--(uu)--(y);
\end{tikzpicture}
\caption{The graph $F_k$.}\label{fig:Fk}
\end{figure}

First, we show that $\{k+2,\ldots,2k\} \subseteq\mathcal{L}(F_k)$.
Fix an integer $R$ satisfying $k+2\leq R\leq 2k$. Let $U_0=\{u_1,\ldots,u_{R-2}\}$ and $U_1=\{u_{R-1},\ldots,u_{2k-1}\}$. Consider the initial state in which the vertices in $\{x,z\}\cup U_0$ are healthy and the vertices in $\{y\}\cup U_1$ are infected. The trajectory is given by
\begin{center}
$\begin{tblr}{colspec={*{19}{c}}, colsep=1pt, rowsep=0pt}
&\qquad& f_0&&f_1&&f_2&&f_3&&f_4&&f_5&&f_6&&f_7&
&f_8\\ \hline
x&\qquad&\HH_0&\to&\II_1&\to&\DD_2&\to&\HH_3&\to&\II_4&\to&\DD_5&\to&\HH_6&\to&\II_7&\to&\DD_8\\
y&\qquad&\II_0&\to&\DD_1&\to&\II_2&\to&\DD_3&\to&\HH_4&\to&\II_5&\to&\DD_6&\to&\HH_7&\to&\II_8\\
z&\qquad&\HH_0&\to&\II_1&\to&\DD_2&\to&\HH_3&\to&\HH_4&\to&\HH_5&\to&\II_6&\to&\DD_7&\to&\HH_8\\
U_0&\qquad&\HH_0&\to&\II_1&\to&\DD_2&\to&\HH_3&\to&\HH_4&\to&\II_5&\to&\DD_6&\to&\HH_7&\to&\II_8\\
U_1&\qquad&\II_0&\to&\DD_1&\to&\HH_2&\to&\II_3&\to&\DD_4&\to&\HH_5&\to&\II_6&\to&\DD_7&\to&\HH_8
\end{tblr}$
\end{center}
Since $f_5=f_8$, the trajectory is eventually periodic with period $3$. Thus every $R$ with $k+2\leq R\leq 2k$ belongs to $\mathcal{L}(F_k)$.

Now, we show that $\{1,\ldots,k\} \subseteq\mathcal{L}(F_k)$. The case $R=1$ follows from Lemma \ref{lem:R1}. For $2\leq R\leq k$, let $U_0=\{u_1,\ldots,u_{k-1}\}$ and $U_1=\{u_k,\ldots,u_{2k-1}\}$, and consider the initial state in which the vertices in $\{x,y,z\}\cup U_0$ are healthy and the vertices in $U_1$ are infected. The evolution is summarized in Table \ref{tab:flags1}.

\begin{table}[h!]
\begin{multicols}{2}\begin{center}
$\begin{tblr}{colspec={c *{13}{c}}, colsep=0pt, rowsep=0pt}
\SetCell[c=14]{c}\text{If }R=2\\\\
\qquad\qquad & f_0 & & f_1 & & f_2 & & f_3 & & f_4 & & f_5 & & f_6 \\
\hline
x  & \HH_0 & \to & \II_1 & \to & \DD_2 & \to & \II_3 & \to & \DD_4 & \to & \II_5 & \to & \DD_6 \\
y   & \HH_0 & \to & \II_1 & \to & \DD_2 & \to & \II_3 & \to & \DD_4 & \to & \II_5 & \to & \DD_6 \\
z   & \HH_0 & \to & \HH_1 & \to & \II_2 & \to & \DD_3 & \to & \HH_4 & \to & \HH_5 & \to & \II_6 \\
U_0 & \HH_0 & \to & \HH_1 & \to & \II_2 & \to & \DD_3 & \to & \II_4 & \to & \DD_5 & \to & \II_6 \\
U_1 & \II_0 & \to & \DD_1 & \to & \II_2 & \to & \DD_3 & \to & \II_4 & \to & \DD_5 & \to & \II_6
\end{tblr}$
\end{center}

\begin{center}
$\begin{tblr}{colspec={c *{11}{c}}, colsep=0pt, rowsep=0pt}
\SetCell[c=12]{c}\text{If }3\leq R\leq k-1\\\\
\qquad\qquad& f_0 & & f_1 & & f_2 & & f_3 & & f_4 & & f_5 \\
\hline
x   & \HH_0 & \to & \II_1 & \to & \DD_2 & \to & \II_3 & \to & \DD_4 & \to & \II_5 \\
y   & \HH_0 & \to & \II_1 & \to & \DD_2 & \to & \II_3 & \to & \DD_4 & \to & \II_5 \\
z & \HH_0 & \to & \HH_1 & \to & \II_2 & \to & \DD_3 & \to & \HH_4 & \to & \HH_5 \\
U_0 & \HH_0 & \to & \HH_1 & \to & \II_2 & \to & \DD_3 & \to & \HH_4 & \to & \HH_5 \\
U_1 & \II_0 & \to & \DD_1 & \to & \HH_2 & \to & \HH_3 & \to & \II_4 & \to & \DD_5
\end{tblr}$
\end{center}
\end{multicols}

\begin{center}
$\begin{tblr}{colspec={c *{17}{c}}, colsep=0pt, rowsep=0pt}
\SetCell[c=18]{c}\text{If }R=k\\\\
\qquad\qquad& f_0 & & f_1 & & f_2 & & f_3 & & f_4 & & f_5 & & f_6 & & f_7 & & f_8 \\
\hline
x   & \HH_0 & \to & \II_1 & \to & \DD_2 & \to & \HH_3 & \to & \II_4 & \to & \DD_5 & \to & \II_6 & \to & \DD_7 & \to & \II_8 \\
y   & \HH_0 & \to & \II_1 & \to & \DD_2 & \to & \II_3 & \to & \DD_4 & \to & \II_5 & \to & \DD_6 & \to & \HH_7 & \to & \II_8 \\
z   & \HH_0 & \to & \HH_1 & \to & \II_2 & \to & \DD_3 & \to & \HH_4 & \to & \HH_5 & \to & \II_6 & \to & \DD_7 & \to & \HH_8 \\
U_0 & \HH_0 & \to & \HH_1 & \to & \II_2 & \to & \DD_3 & \to & \HH_4 & \to & \II_5 & \to & \DD_6 & \to & \HH_7 & \to & \HH_8 \\
U_1 & \II_0 & \to & \DD_1 & \to & \HH_2 & \to & \HH_3 & \to & \II_4 & \to & \DD_5 & \to & \HH_6 & \to & \II_7 & \to & \DD_8
\end{tblr}$
\end{center}
\caption{\label{tab:flags1} Latency in $F_{k}$ for $2\le R\le k$.}
\end{table}

For $R=2$, we have $f_2=f_6$; for $3\leq R\leq k-1$, we have $f_1=f_5$; and for $R=k$, we have $f_1=f_8$. Hence the infection is latent for every $1\leq R\leq k$.

Since $F_k$ is connected and is neither bipartite nor regular, Theorem \ref{thm:bipartiteregular} gives $\HIV(F_k)\leq\Delta(F_k)=2k+1$. On the other hand, $2k\in\mathcal{L}(F_k)$. Therefore $\HIV(F_k)=2k+1$.

It remains to prove that $k+1\in\mathcal{E}(F_k)$. Fix $R=k+1$. For an admissible initial state $f_0$, let $U_0=U\cap \HH_{0}$ and $U_1=U\cap\II_{0}$. Since all vertices in $U$ have the same neighborhood, the trajectory depends only on $|U_0|$ and on the initial states of $x$, $y$, and $z$. Table \ref{tab:flags2} gives the extinction time $t_{\mathrm{ext}}$ for every admissible initial state.

\begin{table}
\begin{center}
$\begin{tblr}{colspec={c||c|c|c|c|c|c|c|c||c|c|c|c|c|c|c|c||c|c|c|c|c|c|c|c}, colsep=4pt, rowsep=0pt}
R=k+1&
\SetCell[c=8]{c}|U_0|=0&&&&&&&&
\SetCell[c=8]{c}1\leq |U_0|\leq k-2&&&&&&&&
\SetCell[c=8]{c}|U_0|=k-1\\ \hline
z&\HH&\HH&\HH&\HH&\II&\II&\II&\II&
\HH&\HH&\HH&\HH&\II&\II&\II&\II&
\HH&\HH&\HH&\HH&\II&\II&\II&\II\\
y&\HH&\HH&\II&\II&\HH&\HH&\II&\II&
\HH&\HH&\II&\II&\HH&\HH&\II&\II&
\HH&\HH&\II&\II&\HH&\HH&\II&\II\\
x&\HH&\II&\HH&\II&\HH&\II&\HH&\II&
\HH&\II&\HH&\II&\HH&\II&\HH&\II&
\HH&\II&\HH&\II&\HH&\II&\HH&\II\\ \hline
t_{\mathrm{ext}}&4&4&3&3&3&3&3&2&
4&4&3&3&4&3&3&3&
4&4&10&3&4&3&3&3
\end{tblr}$

\vspace{0.1in}

$\begin{tblr}{colspec={c||c|c|c|c|c|c|c|c||c|c|c|c|c|c|c|c||c|c|c|c|c|c|c|c}, colsep=4pt, rowsep=0pt}
R=k+1&
\SetCell[c=8]{c}|U_0|=k&&&&&&&&
\SetCell[c=8]{c}k+1\leq |U_0|\leq 2k-2&&&&&&&&
\SetCell[c=8]{c}|U_0|=2k-1\\ \hline
z&\HH&\HH&\HH&\HH&\II&\II&\II&\II&
\HH&\HH&\HH&\HH&\II&\II&\II&\II&
\HH&\HH&\HH&\HH&\II&\II&\II&\II\\
y&\HH&\HH&\II&\II&\HH&\HH&\II&\II&
\HH&\HH&\II&\II&\HH&\HH&\II&\II&
\HH&\HH&\II&\II&\HH&\HH&\II&\II\\
x&\HH&\II&\HH&\II&\HH&\II&\HH&\II&
\HH&\II&\HH&\II&\HH&\II&\HH&\II&
\HH&\II&\HH&\II&\HH&\II&\HH&\II\\ \hline
t_{\mathrm{ext}}&15&8&15&14&4&8&15&3&
6&8&8&5&6&8&8&5&
0&4&4&4&6&4&5&5
\end{tblr}$
\end{center}
\caption{\label{tab:flags2} Extinction of the infection in $F_{k}$ for $R=k+1$. $t_{ext}$ denotes the first time at which each initial admissible state reaches the state $\boldsymbol{0}_{G}$.}
\end{table}

Thus $k+1\in\mathcal{E}(F_k)$. Since every $R\leq k$ belongs to $\mathcal{L}(F_k)$, it follows that $\hiv(F_k)=k+1.$ Therefore $\HIV(F_k)-\hiv(F_k)=(2k+1)-(k+1)=k$.\qedhere
\end{proof}

\section{\texorpdfstring{The $R$-state transition digraph}{The R-state transition digraph}}\label{sec:rState}

Given a graph $G$ and a replacement parameter $R$, in this section we study on the global dynamics induced by $R$ on the set of states $\{0,1,2\}^{V(G)}$. In Section \ref{sec:extinctionT} we denoted by $(f_t^{R,f_0})_{t\geq 0}$ the state sequence generated by Mukwembi's dynamics starting from admissible states $f_0$. In general we can get rid of the last condition and consider state sequences starting from any state in $\{0,1,2\}^{V(G)}$. Now, we define $T_R:\{0,1,2\}^{V(G)}\to \{0,1,2\}^{V(G)}$ the map that assigns to each state $f\in\{0,1,2\}^{V(G)}$, the state obtained by applying Mukwembi's dynamics described in Equation \ref{eq:rules}. We call $T_R$ the \emph{global transition map} of $G$ and $R$. Notice that, with this notation, we have $(f_t^{R,f_0})_{t\geq 0} = (f_0, T_R(f_0), T_R^2(f_0), \cdots)$. Using this notation, we define the main study object of this section.

The \emph{$R$-state transition digraph} of $G$, denoted by $\mathcal D_R(G)$, is the directed graph whose vertex set is $\{0,1,2\}^{V(G)}$, with an arc from $f$ to $g$ if and only if $g=T_R(f)$. Thus, $\mathcal D_R(G)$ is precisely the phase-space digraph of the Mukwembi cellular automaton: its vertices are the configurations of the automaton and its arcs represent one application of the global transition function $T_R$.

Since the arcs are defined by a map, any vertex of $\mathcal D_R(G)$ has out-degree exactly one. Moreover, the only vertex with a loop is $\boldsymbol{0}_G$. The structure of the weakly and strongly connected components is described in the following remark.

\begin{remark}\label{rem:digraphStructure}
    Let $D$ be a directed graph with all vertices having out-degree $1$. Every nontrivial strongly connected component of $D$ is a directed cycle. Moreover, every weakly connected component of a finite digraph in which every vertex has out-degree one contains a unique directed cycle, with directed in-trees attached to its vertices.
\end{remark}

Let $D_H$ be the weakly connected component of $D_R(G)$ containing $\boldsymbol{0}_G$. As a direct consequence of Remark \ref{rem:digraphStructure} we have the following observation.

\begin{observation}\label{obs:weakCom}
    Let $f_0\in\Omega_0(G)$ and let $R$ be a replacement parameter. The infection becomes extinct on $(G,f_0,R)$ if and only if $f_0\in D_H$. Consequently, $R$ guarantees the extinction of the infection on $G$ if and only if $\Omega_0(G)\subseteq D_H$. 
\end{observation}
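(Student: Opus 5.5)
The statement is Observation~\ref{obs:weakCom}, and I will deduce it from Remark~\ref{rem:digraphStructure} together with the fact that $\boldsymbol{0}_G$ is a fixed point of $T_R$.

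First I identify the unique directed cycle of $D_H$. Since $T_R(\boldsymbol{0}_G)=\boldsymbol{0}_G$, the loop at $\boldsymbol{0}_G$ is a directed cycle of length one inside $D_H$. By Remark~\ref{rem:digraphStructure}, each weakly connected component contains exactly one directed cycle, so this loop is the cycle of $D_H$. Every other vertex of $D_H$ lies in an in-tree hanging off that cycle.

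For the forward implication, suppose the infection becomes extinct on $(G,f_0,R)$. Then there is a $t$ with $T_R^t(f_0)=\boldsymbol{0}_G$. This gives a directed path $f_0\to T_R(f_0)\to\cdots\to\boldsymbol{0}_G$ in $\mathcal D_R(G)$, so $f_0$ and $\boldsymbol{0}_G$ are in the same weakly connected component, that is, $f_0\in D_H$.

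For the converse, suppose $f_0\in D_H$. The forward orbit $(T_R^t(f_0))_{t\ge 0}$ follows out-arcs, and each vertex has out-degree one, so the orbit never leaves $D_H$. Since $\{0,1,2\}^{V(G)}$ is finite, the orbit eventually repeats a state and from then on it traverses a directed cycle. That cycle lies in $D_H$, so by the first step it must be the loop at $\boldsymbol{0}_G$. Hence $T_R^t(f_0)=\boldsymbol{0}_G$ for some $t$, which means $R\in\mathcal E(G,f_0)$.

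The only point needing care is this last step: the orbit of $f_0$ does not merely approach $\boldsymbol{0}_G$ but actually reaches it. That follows from the in-tree description in Remark~\ref{rem:digraphStructure}, or directly from finiteness plus uniqueness of the cycle in $D_H$.

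For the ``consequently'' part, $R\in\mathcal E(G)$ means $R\in\mathcal E(G,f_0)$ for every $f_0\in\Omega_0(G)$. By the equivalence just proved, this holds exactly when every admissible initial state lies in $D_H$, i.e.\ when $\Omega_0(G)\subseteq D_H$.
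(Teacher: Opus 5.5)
Your proposal is correct and follows the same route as the paper, which states the observation as a direct consequence of Remark~\ref{rem:digraphStructure}: the loop at $\boldsymbol{0}_G$ is the unique directed cycle of $D_H$, so a forward orbit starting in $D_H$ (which cannot leave $D_H$) must reach $\boldsymbol{0}_G$, and conversely reaching $\boldsymbol{0}_G$ places $f_0$ in $D_H$. You have written out the details that the paper leaves implicit.
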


Notice that there could be weakly connected components of $D_R(G)$ different from $D_H$ even when $R$ guarantees the extinction of the infection (see Figure \ref{fig:stateK3}). By Observation \ref{obs:weakCom} the infection is latent for $R$ if and only if there exists $f_0\in\Omega_0(G)$ with $f_0\not\in D_H$. In this case the weakly connected component of $f_0$ has its own unique directed cycle. In the following result, we show that this cycle can be of any length.

\begin{figure}[ht]
\centering
\begin{tikzpicture}[>={Stealth[scale=1.2]}]
    \draw (0,4) node {$R=1$};
    \begin{scope}[shift={(-3,2)}]
    \node (000a) at (0,0) [white] {$(0,0,0)$}; 
    \draw [black!70, thick] (-0.4,-0.1)--(0.4,-0.1);
    \draw [black!70, thick] (-0.4,-0.1) arc (135:45:0.567);
    \draw [blue] (0,-0.1) node {$\bullet$};
    \draw [blue] (0.4,-0.1) node {$\bullet$};
    \draw [blue] (-0.4,-0.1) node {$\bullet$};

    \node (002a) at (2,1) [white] {$(0,0,2)$};
    \draw [black!70, thick] (1.6,0.9)--(2.4,0.9);
    \draw [black!70, thick] (1.6,0.9) arc (135:45:0.567);
    \draw [blue]  (2.0,0.9) node {$\bullet$};
    \draw [black] (2.4,0.9) node {$\bullet$};
    \draw [blue]  (1.6,0.9) node {$\bullet$}; 
    
    \node (020a) at (0,1) [white] {$(0,2,0)$};
    \draw [black!70, thick] (-0.4,0.9)--(0.4,0.9);
    \draw [black!70, thick] (-0.4,0.9) arc (135:45:0.567);
    \draw [blue] (0,0.9) node {$\bullet$};
    \draw [black] (0.4,0.9) node {$\bullet$};
    \draw [blue] (-0.4,0.9) node {$\bullet$};

    \node (022a) at (2,-1) [white] {$(0,2,2)$};
    \draw [black!70, thick] (1.6,-1.1)--(2.4,-1.1);
    \draw [black!70, thick] (1.6,-1.1) arc (135:45:0.567);
    \draw [blue] (2,-1.1) node {$\bullet$};
    \draw [black] (2.4,-1.1) node {$\bullet$};
    \draw [black] (1.6,-1.1) node {$\bullet$};

    \node (200a) at (-2,1) [white] {$(2,0,0)$};
    \draw [black!70, thick] (-2.4,0.9)--(-1.6,0.9);
    \draw [black!70, thick] (-2.4,0.9) arc (135:45:0.567);
    \draw [blue] (-2,0.9) node {$\bullet$};
    \draw [blue] (-1.6,0.9) node {$\bullet$};
    \draw [black] (-2.4,0.9) node {$\bullet$};

    \node (202a) at (0,-1) [white] {$(2,0,2)$};
    \draw [black!70, thick] (-0.4,-1.1)--(0.4,-1.1);
    \draw [black!70, thick] (-0.4,-1.1) arc (135:45:0.567);
    \draw [blue] (0,-1.1) node {$\bullet$};
    \draw [black] (0.4,-1.1) node {$\bullet$};
    \draw [black] (-0.4,-1.1) node {$\bullet$};

    \node (220a) at (-2,-1) [white] {$(2,2,0)$};
    \draw [black!70, thick] (-2.4,-1.1)--(-1.6,-1.1);
    \draw [black!70, thick] (-2.4,-1.1) arc (135:45:0.567);
    \draw [blue] (-2,-1.1) node {$\bullet$};
    \draw [black] (-1.6,-1.1) node {$\bullet$};
    \draw [black] (-2.4,-1.1) node {$\bullet$};

    \node (222a) at (-2,0) [white] {$(2,2,2)$};
    \draw [black!70, thick] (-2.4,-0.1)--(-1.6,-0.1);
    \draw [black!70, thick] (-2.4,-0.1) arc (135:45:0.567);
    \draw [black] (-2,-0.1) node {$\bullet$};
    \draw [black] (-1.6,-0.1) node {$\bullet$};
    \draw [black] (-2.4,-0.1) node {$\bullet$};

    \node (111a) at (-4,0) [white] {$(1,1,1)$};
    \draw [black!70, thick] (-4.4,-0.1)--(-3.6,-0.1);
    \draw [black!70, thick] (-4.4,-0.1) arc (135:45:0.567);
    \draw [red] (-4,-0.1) node {$\bullet$};
    \draw [red] (-3.6,-0.1) node {$\bullet$};
    \draw [red] (-4.4,-0.1) node {$\bullet$};    \draw [->] (002a)--(000a);
    \draw [->] (020a)--(000a);
    \draw [->] (200a)--(000a);
    \draw [->] (202a)--(000a);
    \draw [->] (022a)--(000a);
    \draw [->] (220a)--(000a);
    \draw [->] (222a)--(000a);
    \draw [->] (111a)--(222a);
    \draw [->] (0.7,-0.1) arc (-150:150:0.2);
    \end{scope}
    \begin{scope}[shift={(5,2)}]
        \node (011a) at (2,0) [white] {$(0,1,1)$};
        \draw [black!70, thick] (1.6,-0.1)--(2.4,-0.1);
        \draw [black!70, thick] (1.6,-0.1) arc (135:45:0.567);
        \draw [red] (2,-0.1) node {$\bullet$};
        \draw [red] (2.4,-0.1) node {$\bullet$};
        \draw [blue] (1.6,-0.1) node {$\bullet$};

        \node (100a) at (-4,0) [white] {$(1,0,0)$};
        \draw [black!70, thick] (-4.4,-0.1)--(-3.6,-0.1);
        \draw [black!70, thick] (-4.4,-0.1) arc (135:45:0.567);
        \draw [blue] (-4,-0.1) node {$\bullet$};
        \draw [blue] (-3.6,-0.1) node {$\bullet$};
        \draw [red] (-4.4,-0.1) node {$\bullet$};

        \node (102a) at (-2,-1) [white] {$(1,0,2)$};
        \draw [black!70, thick] (-2.4,-1.1)--(-1.6,-1.1);
        \draw [black!70, thick] (-2.4,-1.1) arc (135:45:0.567);
        \draw [blue] (-2,-1.1) node {$\bullet$};
        \draw [black] (-1.6,-1.1) node {$\bullet$};
        \draw [red] (-2.4,-1.1) node {$\bullet$};

        \node (120a) at (-2,1) [white] {$(1,2,0)$};
        \draw [black!70, thick] (-2.4,0.9)--(-1.6,0.9);
        \draw [black!70, thick] (-2.4,0.9) arc (135:45:0.567);
        \draw [black] (-2,0.9) node {$\bullet$};
        \draw [blue] (-1.6,0.9) node {$\bullet$};
        \draw [red] (-2.4,0.9) node {$\bullet$};

        \node (122a) at (0,0) [white] {$(1,2,2)$};
        \draw [black!70, thick] (-0.4,-0.1)--(0.4,-0.1);
        \draw [black!70, thick] (-0.4,-0.1) arc (135:45:0.567);
        \draw [black] (0,-0.1) node {$\bullet$};
        \draw [black] (0.4,-0.1) node {$\bullet$};
        \draw [red] (-0.4,-0.1) node {$\bullet$};

        \node (211a) at (-2,0) [white] {$(2,1,1)$};
        \draw [black!70, thick] (-2.4,-0.1)--(-1.6,-0.1);
        \draw [black!70, thick] (-2.4,-0.1) arc (135:45:0.567);
        \draw [red] (-2,-0.1) node {$\bullet$};
        \draw [red] (-1.6,-0.1) node {$\bullet$};
        \draw [black] (-2.4,-0.1) node {$\bullet$};

        \draw [->] (100a)--(211a);
        \draw [->] (120a)--(211a);
        \draw [->] (102a)--(211a);
        \draw [->] (011a)--(122a);
        \draw [<->] (211a)--(122a);
    \end{scope}
    \begin{scope}[shift={(-3,-1)}]
        \node (110a) at (2,0) [white] {$(1,1,0)$};
        \draw [black!70, thick] (1.6,-0.1)--(2.4,-0.1);
        \draw [black!70, thick] (1.6,-0.1) arc (135:45:0.567);
        \draw [red] (2,-0.1) node {$\bullet$};
        \draw [blue] (2.4,-0.1) node {$\bullet$};
        \draw [red] (1.6,-0.1) node {$\bullet$};

        \node (001a) at (-4,0) [white] {$(0,0,1)$};
        \draw [black!70, thick] (-4.4,-0.1)--(-3.6,-0.1);
        \draw [black!70, thick] (-4.4,-0.1) arc (135:45:0.567);
        \draw [blue] (-4,-0.1) node {$\bullet$};
        \draw [red] (-3.6,-0.1) node {$\bullet$};
        \draw [blue] (-4.4,-0.1) node {$\bullet$};

        \node (201a) at (-2,-1) [white] {$(2,0,1)$};
        \draw [black!70, thick] (-2.4,-1.1)--(-1.6,-1.1);
        \draw [black!70, thick] (-2.4,-1.1) arc (135:45:0.567);
        \draw [blue] (-2,-1.1) node {$\bullet$};
        \draw [red] (-1.6,-1.1) node {$\bullet$};
        \draw [black] (-2.4,-1.1) node {$\bullet$};

        \node (021a) at (-2,1) [white] {$(0,2,1)$};
        \draw [black!70, thick] (-2.4,0.9)--(-1.6,0.9);
        \draw [black!70, thick] (-2.4,0.9) arc (135:45:0.567);
        \draw [black] (-2,0.9) node {$\bullet$};
        \draw [red] (-1.6,0.9) node {$\bullet$};
        \draw [blue] (-2.4,0.9) node {$\bullet$};

        \node (221a) at (0,0) [white] {$(2,2,1)$};
        \draw [black!70, thick] (-0.4,-0.1)--(0.4,-0.1);
        \draw [black!70, thick] (-0.4,-0.1) arc (135:45:0.567);
        \draw [black] (0,-0.1) node {$\bullet$};
        \draw [red] (0.4,-0.1) node {$\bullet$};
        \draw [black] (-0.4,-0.1) node {$\bullet$};

        \node (112a) at (-2,0) [white] {$(1,1,2)$};
        \draw [black!70, thick] (-2.4,-0.1)--(-1.6,-0.1);
        \draw [black!70, thick] (-2.4,-0.1) arc (135:45:0.567);
        \draw [red] (-2,-0.1) node {$\bullet$};
        \draw [black] (-1.6,-0.1) node {$\bullet$};
        \draw [red] (-2.4,-0.1) node {$\bullet$};

        \draw [->] (001a)--(112a);
        \draw [->] (201a)--(112a);
        \draw [->] (021a)--(112a);
        \draw [->] (110a)--(221a);
        \draw [<->] (112a)--(221a);
    \end{scope}
    \begin{scope}[shift={(5,-1)}]
        \node (101a) at (2,0) [white] {$(1,0,1)$};
        \draw [black!70, thick] (1.6,-0.1)--(2.4,-0.1);
        \draw [black!70, thick] (1.6,-0.1) arc (135:45:0.567);
        \draw [blue] (2,-0.1) node {$\bullet$};
        \draw [red] (2.4,-0.1) node {$\bullet$};
        \draw [red] (1.6,-0.1) node {$\bullet$};

        \node (010a) at (-4,0) [white] {$(0,1,0)$};
        \draw [black!70, thick] (-4.4,-0.1)--(-3.6,-0.1);
        \draw [black!70, thick] (-4.4,-0.1) arc (135:45:0.567);
        \draw [red] (-4,-0.1) node {$\bullet$};
        \draw [blue] (-3.6,-0.1) node {$\bullet$};
        \draw [blue] (-4.4,-0.1) node {$\bullet$};

        \node (210a) at (-2,-1) [white] {$(2,1,0)$};
        \draw [black!70, thick] (-2.4,-1.1)--(-1.6,-1.1);
        \draw [black!70, thick] (-2.4,-1.1) arc (135:45:0.567);
        \draw [red] (-2,-1.1) node {$\bullet$};
        \draw [blue] (-1.6,-1.1) node {$\bullet$};
        \draw [black] (-2.4,-1.1) node {$\bullet$};

        \node (012a) at (-2,1) [white] {$(0,1,2)$};
        \draw [black!70, thick] (-2.4,0.9)--(-1.6,0.9);
        \draw [black!70, thick] (-2.4,0.9) arc (135:45:0.567);
        \draw [red] (-2,0.9) node {$\bullet$};
        \draw [black] (-1.6,0.9) node {$\bullet$};
        \draw [blue] (-2.4,0.9) node {$\bullet$};

        \node (212a) at (0,0) [white] {$(2,1,2)$};
        \draw [black!70, thick] (-0.4,-0.1)--(0.4,-0.1);
        \draw [black!70, thick] (-0.4,-0.1) arc (135:45:0.567);
        \draw [red] (0,-0.1) node {$\bullet$};
        \draw [black] (0.4,-0.1) node {$\bullet$};
        \draw [black] (-0.4,-0.1) node {$\bullet$};

        \node (121a) at (-2,0) [white] {$(1,2,1)$};
        \draw [black!70, thick] (-2.4,-0.1)--(-1.6,-0.1);
        \draw [black!70, thick] (-2.4,-0.1) arc (135:45:0.567);
        \draw [black] (-2,-0.1) node {$\bullet$};
        \draw [red] (-1.6,-0.1) node {$\bullet$};
        \draw [red] (-2.4,-0.1) node {$\bullet$};

        \draw [->] (010a)--(121a);
        \draw [->] (210a)--(121a);
        \draw [->] (012a)--(121a);
        \draw [->] (101a)--(212a);
        \draw [<->] (121a)--(212a);
    \end{scope}  
\end{tikzpicture}
\begin{tikzpicture}[>={Stealth[scale=1.2]}]
    \draw (0,-3) node {$R=2$};
    \begin{scope}[shift={(1,-6)}]
        \node (001b) at (-6,-1.875) [white] {$(0,0,1)$};
        \draw [black!70, thick] (-6.4,-1.975)--(-5.6,-1.975);
        \draw [black!70, thick] (-6.4,-1.975) arc (135:45:0.567);
        \draw [blue] (-6,-1.975) node {$\bullet$};
        \draw [red] (-5.6,-1.975) node {$\bullet$};
        \draw [blue] (-6.4,-1.975) node {$\bullet$};

        \node (010b) at (-6,-0.375) [white] {$(0,1,0)$};
        \draw [black!70, thick] (-6.4,-0.475)--(-5.6,-0.475);
        \draw [black!70, thick] (-6.4,-0.475) arc (135:45:0.567);
        \draw [red] (-6,-0.475) node {$\bullet$};
        \draw [blue] (-5.6,-0.475) node {$\bullet$};
        \draw [blue] (-6.4,-0.475) node {$\bullet$};

        \node (100b) at (-6,1.125) [white] {$(1,0,0)$};
        \draw [black!70, thick] (-6.4,1.025)--(-5.6,1.025);
        \draw [black!70, thick] (-6.4,1.025) arc (135:45:0.567);
        \draw [blue] (-6,1.025) node {$\bullet$};
        \draw [blue] (-5.6,1.025) node {$\bullet$};
        \draw [red] (-6.4,1.025) node {$\bullet$};

        \node (011b) at (-4,1.875) [white] {$(0,1,1)$};
        \draw [black!70, thick] (-4.4,1.775)--(-3.6,1.775);
        \draw [black!70, thick] (-4.4,1.775) arc (135:45:0.567);
        \draw [red] (-4,1.775) node {$\bullet$};
        \draw [red] (-3.6,1.775) node {$\bullet$};
        \draw [blue] (-4.4,1.775) node {$\bullet$};

        \node (101b) at (-4,0.375) [white] {$(1,0,1)$};
        \draw [black!70, thick] (-4.4,0.275)--(-3.6,0.275);
        \draw [black!70, thick] (-4.4,0.275) arc (135:45:0.567);
        \draw [blue] (-4,0.275) node {$\bullet$};
        \draw [red] (-3.6,0.275) node {$\bullet$};
        \draw [red] (-4.4,0.275) node {$\bullet$};

        \node (110b) at (-4,-1.125) [white] {$(1,1,0)$};
        \draw [black!70, thick] (-4.4,-1.225)--(-3.6,-1.225);
        \draw [black!70, thick] (-4.4,-1.225) arc (135:45:0.567);
        \draw [red] (-4,-1.225) node {$\bullet$};
        \draw [blue] (-3.6,-1.225) node {$\bullet$};
        \draw [red] (-4.4,-1.225) node {$\bullet$};

        \node (112b) at (-4,-1.875) [white] {$(1,1,2)$};
        \draw [black!70, thick] (-4.4,-1.975)--(-3.6,-1.975);
        \draw [black!70, thick] (-4.4,-1.975) arc (135:45:0.567);
        \draw [red] (-4,-1.975) node {$\bullet$};
        \draw [black] (-3.6,-1.975) node {$\bullet$};
        \draw [red] (-4.4,-1.975) node {$\bullet$};

        \node (121b) at (-4,-0.375) [white] {$(1,2,1)$};
        \draw [black!70, thick] (-4.4,-0.475)--(-3.6,-0.475);
        \draw [black!70, thick] (-4.4,-0.475) arc (135:45:0.567);
        \draw [black] (-4,-0.475) node {$\bullet$};
        \draw [red] (-3.6,-0.475) node {$\bullet$};
        \draw [red] (-4.4,-0.475) node {$\bullet$};

        \node (211b) at (-4,1.125) [white] {$(2,1,1)$};
        \draw [black!70, thick] (-4.4,1.025)--(-3.6,1.025);
        \draw [black!70, thick] (-4.4,1.025) arc (135:45:0.567);
        \draw [red] (-4,1.025) node {$\bullet$};
        \draw [red] (-3.6,1.025) node {$\bullet$};
        \draw [black] (-4.4,1.025) node {$\bullet$};

        \node (122b) at (-2,1.5) [white] {$(1,2,2)$};
        \draw [black!70, thick] (-2.4,1.4)--(-1.6,1.4);
        \draw [black!70, thick] (-2.4,1.4) arc (135:45:0.567);
        \draw [black] (-2,1.4) node {$\bullet$};
        \draw [black] (-1.6,1.4) node {$\bullet$};
        \draw [red] (-2.4,1.4) node {$\bullet$};

        \node (212b) at (-2,0) [white] {$(2,1,2)$};
        \draw [black!70, thick] (-2.4,-0.1)--(-1.6,-0.1);
        \draw [black!70, thick] (-2.4,-0.1) arc (135:45:0.567);
        \draw [red] (-2,-0.1) node {$\bullet$};
        \draw [black] (-1.6,-0.1) node {$\bullet$};
        \draw [black] (-2.4,-0.1) node {$\bullet$};

        \node (221b) at (-2,-1.5) [white] {$(2,2,1)$};
        \draw [black!70, thick] (-2.4,-1.6)--(-1.6,-1.6);
        \draw [black!70, thick] (-2.4,-1.6) arc (135:45:0.567);
        \draw [black] (-2,-1.6) node {$\bullet$};
        \draw [red] (-1.6,-1.6) node {$\bullet$};
        \draw [black] (-2.4,-1.6) node {$\bullet$};

        \node (002b) at (0,-1.5) [white] {$(0,0,2)$};
        \draw [black!70, thick] (-0.4,-1.6)--(0.4,-1.6);
        \draw [black!70, thick] (-0.4,-1.6) arc (135:45:0.567);
        \draw [blue] (0,-1.6) node {$\bullet$};
        \draw [black] (0.4,-1.6) node {$\bullet$};
        \draw [blue] (-0.4,-1.6) node {$\bullet$};

        \node (020b) at (0,0) [white] {$(0,2,0)$};
        \draw [black!70, thick] (-0.4,-0.1)--(0.4,-0.1);
        \draw [black!70, thick] (-0.4,-0.1) arc (135:45:0.567);
        \draw [black] (0,-0.1) node {$\bullet$};
        \draw [blue] (0.4,-0.1) node {$\bullet$};
        \draw [blue] (-0.4,-0.1) node {$\bullet$};

        \node (200b) at (0,1.5) [white] {$(2,0,0)$};
        \draw [black!70, thick] (-0.4,1.4)--(0.4,1.4);
        \draw [black!70, thick] (-0.4,1.4) arc (135:45:0.567);
        \draw [blue] (0,1.4) node {$\bullet$};
        \draw [blue] (0.4,1.4) node {$\bullet$};
        \draw [black] (-0.4,1.4) node {$\bullet$};

        \node (000b) at (2,0) [white] {$(0,0,0)$};
        \draw [black!70, thick] (1.6,-0.1)--(2.4,-0.1);
        \draw [black!70, thick] (1.6,-0.1) arc (135:45:0.567);
        \draw [blue] (2,-0.1) node {$\bullet$};
        \draw [blue] (2.4,-0.1) node {$\bullet$};
        \draw [blue] (1.6,-0.1) node {$\bullet$};

        \node (022b) at (4,1) [white] {$(0,2,2)$};
        \draw [black!70, thick] (3.6,0.9)--(4.4,0.9);
        \draw [black!70, thick] (3.6,0.9) arc (135:45:0.567);
        \draw [black] (4,0.9) node {$\bullet$};
        \draw [black] (4.4,0.9) node {$\bullet$};
        \draw [blue] (3.6,0.9) node {$\bullet$};

        \node (111b) at (2,1) [white] {$(1,1,1)$};
        \draw [black!70, thick] (1.6,0.9)--(2.4,0.9);
        \draw [black!70, thick] (1.6,0.9) arc (135:45:0.567);
        \draw [red] (2,0.9) node {$\bullet$};
        \draw [red] (2.4,0.9) node {$\bullet$};
        \draw [red] (1.6,0.9) node {$\bullet$};

        \node (202b) at (4,0) [white] {$(2,0,2)$};
        \draw [black!70, thick] (3.6,-0.1)--(4.4,-0.1);
        \draw [black!70, thick] (3.6,-0.1) arc (135:45:0.567);
        \draw [blue] (4,-0.1) node {$\bullet$};
        \draw [black] (4.4,-0.1) node {$\bullet$};
        \draw [black] (3.6,-0.1) node {$\bullet$};

        \node (220b) at (4,-1) [white] {$(2,2,0)$};
        \draw [black!70, thick] (3.6,-1.1)--(4.4,-1.1);
        \draw [black!70, thick] (3.6,-1.1) arc (135:45:0.567);
        \draw [black] (4,-1.1) node {$\bullet$};
        \draw [blue] (4.4,-1.1) node {$\bullet$};
        \draw [black] (3.6,-1.1) node {$\bullet$};

        \node (222b) at (2,2) [white] {$(2,2,2)$};
        \draw [black!70, thick] (1.6,1.9)--(2.4,1.9);
        \draw [black!70, thick] (1.6,1.9) arc (135:45:0.567);
        \draw [black] (2,1.9) node {$\bullet$};
        \draw [black] (2.4,1.9) node {$\bullet$};
        \draw [black] (1.6,1.9) node {$\bullet$};

        \draw [->] (100b)--(211b);
        \draw [->] (211b)--(122b);
        \draw [->] (011b)--(122b);
        \draw [->] (122b)--(200b);
        \draw [->] (200b)--(000b);
        \draw [->] (010b)--(121b);
        \draw [->] (121b)--(212b);
        \draw [->] (101b)--(212b);
        \draw [->] (212b)--(020b);
        \draw [->] (020b)--(000b);
        \draw [->] (001b)--(112b);
        \draw [->] (112b)--(221b);
        \draw [->] (110b)--(221b);
        \draw [->] (221b)--(002b);
        \draw [->] (002b)--(000b);
        \draw [->] (222b)--(111b);
        \draw [->] (111b)--(000b);
        \draw [->] (022b)--(000b);
        \draw [->] (202b)--(000b);
        \draw [->] (220b)--(000b);
        \draw [->] (1.9,-0.2) arc (120:420:0.2);
    \end{scope}
    \begin{scope}[shift={(-3,-9)}]
        \node (012b) at (0,0) [white] {$(0,1,2)$};
        \draw [black!70, thick] (-0.4,-0.1)--(0.4,-0.1);
        \draw [black!70, thick] (-0.4,-0.1) arc (135:45:0.567);
        \draw [red] (0,-0.1) node {$\bullet$};
        \draw [black] (0.4,-0.1) node {$\bullet$};
        \draw [blue] (-0.4,-0.1) node {$\bullet$};

        \node (120b) at (-1,-1) [white] {$(1,2,0)$};
        \draw [black!70, thick] (-1.4,-1.1)--(-0.6,-1.1);
        \draw [black!70, thick] (-1.4,-1.1) arc (135:45:0.567);
        \draw [black] (-1,-1.1) node {$\bullet$};
        \draw [blue] (-0.6,-1.1) node {$\bullet$};
        \draw [red] (-1.4,-1.1) node {$\bullet$};

        \node (201b) at (1,-1) [white] {$(2,0,1)$};
        \draw [black!70, thick] (0.6,-1.1)--(1.4,-1.1);
        \draw [black!70, thick] (0.6,-1.1) arc (135:45:0.567);
        \draw [blue] (1,-1.1) node {$\bullet$};
        \draw [red] (1.4,-1.1) node {$\bullet$};
        \draw [black] (0.6,-1.1) node {$\bullet$};

        \draw [->] (012b)--(120b);
        \draw [->] (120b)--(201b);
        \draw [->] (201b)--(012b);
    \end{scope}
    \begin{scope}[shift={(3,-9)}]
        \node (021b) at (0,0) [white] {$(0,2,1)$};
        \draw [black!70, thick] (-0.4,-0.1)--(0.4,-0.1);
        \draw [black!70, thick] (-0.4,-0.1) arc (135:45:0.567);
        \draw [black] (0,-0.1) node {$\bullet$};
        \draw [red] (0.4,-0.1) node {$\bullet$};
        \draw [blue] (-0.4,-0.1) node {$\bullet$};

        \node (102b) at (-1,-1) [white] {$(1,0,2)$};
        \draw [black!70, thick] (-1.4,-1.1)--(-0.6,-1.1);
        \draw [black!70, thick] (-1.4,-1.1) arc (135:45:0.567);
        \draw [blue] (-1,-1.1) node {$\bullet$};
        \draw [black] (-0.6,-1.1) node {$\bullet$};
        \draw [red] (-1.4,-1.1) node {$\bullet$};

        \node (210b) at (1,-1) [white] {$(2,1,0)$};
        \draw [black!70, thick] (0.6,-1.1)--(1.4,-1.1);
        \draw [black!70, thick] (0.6,-1.1) arc (135:45:0.567);
        \draw [red] (1,-1.1) node {$\bullet$};
        \draw [blue] (1.4,-1.1) node {$\bullet$};
        \draw [black] (0.6,-1.1) node {$\bullet$};

        \draw [->] (021b)--(102b);
        \draw [->] (102b)--(210b);
        \draw [->] (210b)--(021b);
    \end{scope}
\end{tikzpicture}
\caption{\label{fig:stateK3}The $R$-state transition digraph of $K_{3}$, with $R=1$ and $R=2$. In each state, the healthy, infected and dead vertices are represented by nodes of colors blue, red and black, respectively.}
\end{figure}

\begin{theorem}
    Let $n\in \mathbb{Z}^{+}$. There exists a graph $G$, a positive integer $R$ and an admissible initial state $f_{0}$ of $G$ such that the cycle on the weakly connected component of $f_{0}$ has length $n$.
\end{theorem}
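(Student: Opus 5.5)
The plan is to treat small $n$ directly and, for $n\ge 3$, to build a graph that carries a one-directional ``infection wave'' around a cycle of length $n$.

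For $n=1$, take any graph, any $R$, and $f_0=\boldsymbol{0}_G$. Its weakly connected component contains the loop at $\boldsymbol{0}_G$, which is a cycle of length $1$.

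For $n=2$, Lemma \ref{lem:R1} already gives what is needed. With $R=1$ and a single infected endpoint $u$ of an edge $uv$, the states of $u$ and $v$ alternate with period $2$. One must check that the whole state $f_t$, and not just $u$ and $v$, becomes $2$-periodic. The cleanest choice is $G=K_2$, or $P_3$ with the infected vertex in the middle. Alternatively, for a bipartite graph the proof of Theorem \ref{hivBip} gives the alternating sequence $f_1,f_2,f_1,\dots$ directly, a cycle of length exactly $2$.

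For $n\ge 3$, I would use $R=2$ on a graph containing the cycle $C_n=v_0v_1\cdots v_{n-1}$. The key observation is that the pattern
\[
f(v_p)=1,\qquad f(v_{p-1})=2,\qquad f(v_i)=0 \ \text{otherwise}
\]
is transported to the same pattern with $p$ replaced by $p+1$ (indices mod $n$). Indeed:
\begin{itemize}
\item $v_{p+1}$ is healthy with an infected neighbour, so it becomes infected;
\item $v_p$ dies;
\item $v_{p-1}$ is dead with only one infected neighbour, fewer than $R=2$, so it becomes healthy;
\item every other vertex is healthy with no infected neighbour, so it stays healthy.
\end{itemize}
Hence this pattern lies on a directed cycle of $\mathcal D_2(C_n)$ of length exactly $n$, since the $n$ rotations are distinct states. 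If the cycle sits inside a larger graph $G$, the same computation works provided the extra vertices stay healthy and never see an infected neighbour while the wave circulates.

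The remaining task, which I expect to be the main obstacle, is to reach this one-directional wave from an admissible initial state. On a bare cycle, any admissible start launches waves in both directions, and these annihilate. So $G$ needs a gadget attached to $C_n$ that breaks the symmetry: it must let the infection start near $v_0$ and leave a dead or cured ``wall'' behind it on one side, so that only the clockwise front survives. I would try first a small triangle or pendant structure attached at $v_0,v_1$, with an initial infected set chosen so that at some time $t_0$ the cycle shows exactly one infected vertex followed by one dead vertex, with all gadget vertices healthy and isolated from infection from then on. A useful fact here is that a wave entering a pendant path simply dies at its end, since a dead vertex with one infected neighbour is cured when $R=2$. I would verify the transient by a short explicit table, as in the proof of Theorem \ref{thm:unbounded-gap}. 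If this fails, the fallback is to lengthen the gadget, possibly increasing $R$ and the degrees at the attachment vertices. Once the wave state is reached, Remark \ref{rem:digraphStructure} shows that the unique cycle in the weakly connected component of $f_0$ is the wave orbit, of length $n$.
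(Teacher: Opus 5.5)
Your rotating-wave computation on $C_n$ with $R=2$ is correct, and the cases $n=1,2$ are fine ($K_2$ falls outside the paper's standing assumption of at least three vertices, but $P_3$ or the bipartite alternation works). The proof is nevertheless incomplete. The whole difficulty is to produce an admissible $f_0$ whose trajectory enters the one-directional wave, and you leave this as a plan. The gadgets you suggest also cannot work as described.

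\begin{itemize}
\item Your proviso that the extra vertices ``stay healthy and never see an infected neighbour while the wave circulates'' is impossible for any vertex $w$ adjacent to a cycle vertex $v_0$, since the wave visits $v_0$ on every lap.
\item With $R=2$, such an attachment makes $v_0$ a reflector. Suppose $f_t(v_0)=1$, $f_t(v_{n-1})=2$, and $v_1,w$ are healthy. At time $t+1$ both $v_1$ and $w$ are infected while $v_0$ is dead. So $v_0$ has two infected neighbours, is reinfected at time $t+2$, and launches a backward front. This is exactly what happens for a pendant at $v_0$ or a triangle on $v_0v_1$.
\item Raising $R$ so that attachment vertices become transparent does not rescue the idea cheaply. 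Once the wave has completed a lap, its infected front is adjacent to a cured vertex, giving an edge of type $(\II\CC)_t$. Lemma \ref{Lemma:Estructura} forbids this from an admissible start whenever $R\geq\Delta(G)$.
\item Hence $G$ must contain a vertex of degree at least $R+1$. Such a vertex is itself a potential reflector, and you would have to show its reflected fronts never reach the cycle in the periodic regime.
\end{itemize}
None of this is addressed, so the step carrying all the content is missing.

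The paper avoids symmetry breaking altogether. For even $n=2k\geq 4$ it starts the infection at a dead end: both leaves $v_0,v_0'$ of the double broom $G_{2k}$ are infected, so only one front is ever launched. It then uses the very reflection mechanism you are trying to suppress. With $R=2$, the vertex $v_1$ (resp.\ $v_{k-1}$) whose two leaves become infected simultaneously is reinfected, so the single front bounces back and forth with period $2k$. Odd lengths $n\geq 5$ come from a separate path-with-a-triangle graph $G_{2k+1}$ with $R=2$, and $n=3$ comes from the graph $F_k$ of Theorem \ref{thm:unbounded-gap}. Your circulating-wave idea would give a uniform construction for all $n\geq 3$ if you exhibited a gadget and verified its transient, but as it stands that gadget is missing.
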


\begin{proof}
    For $n=1$ it is enough to consider any graph $G$ and the admissible initial state $\boldsymbol{0}_{G}$. For $n=2$ it is enough to consider the cycle graph on $4$ vertices $C_{4}$, $R=2$, and consider the admissible initial state where the vertices are alternately healthy and infected. For $n=3$, there is an example in the proof of Theorem \ref{thm:unbounded-gap}, when we considered the case $k+2\le R\le 2k$.

    For even $n\ge 4$, let $2k=n$, and let $G_{2k}$ be the graph with $k+3$ vertices $v_{0}, v'_{0},v_{1},v_{2},\dots,v_{k-1}, v_{k}, v'_{k}$. The edge set of $G_{2k}$ is given by $E=\{(v_{t},v_{t+1}):0\le t\le k-1\}\cup\{(v'_0,v_1),(v_{k-1},v'_{k})\}$.

\begin{figure}[ht]
\centering
\begin{tikzpicture}
    \draw (0,0.5) node {$\bullet$};
    \draw (0,-0.5) node {$\bullet$};
    \draw (1,0) node {$\bullet$};
    \draw (2,0) node {$\bullet$};
    \draw (3,0) node {$\bullet$};
    \draw (5,0) node {$\bullet$};
    \draw (6,0) node {$\bullet$};
    \draw (7,0.5) node {$\bullet$};
    \draw (7,-0.5) node {$\bullet$};
    \draw (0,0.5) [below] node {$v_0$};
    \draw (0,-0.5) [below] node {$v'_0$};
    \draw (1,0) [below] node {$v_1$};
    \draw (2,0) [below] node {$v_2$};
    \draw (3,0) [below] node {$v_3$};
    \draw (5,0) [below] node {$v_{k-2}$};
    \draw (6,0) [below] node {$v_{k-1}$};
    \draw (7,0.5) [below] node {$v_{k}$};
    \draw (7,-0.5) [below] node {$v'_{k}$};
    \draw (0,0.5)--(1,0)--(0,-0.5);
    \draw (7,0.5)--(6,0)--(7,-0.5);
    \draw (1,0)--(3.5,0);
    \draw (4,0) node {$\dots$};
    \draw (4.5,0)--(6,0);
\end{tikzpicture}
\caption{\label{fig:Gpar} The graph $G_{2k}$.}
\end{figure}

Figure \ref{fig:Gpar} shows the graph $G_{2k}$. Let $R=2$ and consider the admissible initial state of $G_{2k}$ where $\II_{0}=\{v_{0},v'_0\}$, and the remaining vertices are healthy. We will see that $f_{1}=f_{n+1}$. This follows from the fact that for every $1\le t\le 2k+1=n+1$:
\[\II_{t}=\begin{cases}\{v_0,v'_0\}&\text{if }t=2k,\\
\{v_k,v'_k\}&\text{if }t=k,\\
\{v_j\}&\text{if }t\in\{j,n-j\},1\le j\le k-1,\\
\{v_{1}\}&\text{if }t=2k+1.  
\end{cases}\qquad
\DD_{t}=\begin{cases}\{v_0,v'_0\}&\text{if }t\in\{1,2k+1\},\\
\{v_k,v'_k\}&\text{if }t=k+1,\\
\{v_j\}&\text{if }t\in\{j+1,n-j+1\}.\\
\end{cases}\]
Therefore, $f_{1}=f_{2k+1}$. Also, observe that the cycle has length $2k$ since $\II_{t}=\{v_{0},v'_{0}\}$ for exactly one value of $t\in \{1,2,\dots,2k+1\}$.

For odd $n\ge 5$, let $2k+1=n$, and let $G_{2k+1}$ be the graph with $2k+1$ vertices $v_{0},v_{1},\dots,v_{2k}$. The set of edges of $G_{2k+1}$ is given by $E=\{(v_{t},v_{t+1}):0\le t\le 2k-1\}\cup \{(v_{k-1},v_{k+1})\}$.

\begin{figure}[ht]
\centering
\begin{tikzpicture}
    \draw (0,0) node {$\bullet$};
    \draw (1,0) node {$\bullet$};
    \draw (3,0) node {$\bullet$};
    \draw (4,0) node {$\bullet$};
    \draw (5,0) node {$\bullet$};
    \draw (6,0) node {$\bullet$};
    \draw (8,0) node {$\bullet$};
    \draw (9,0) node {$\bullet$};
    \draw (4.5,0.7) node {$\bullet$};
    
    \draw (0,0) [below] node {$v_0$};
    \draw (1,0) [below] node {$v_1$};
    \draw (2,0) node {$\dots $};
    \draw (3,0) [below] node {$v_{k-2}$};
    \draw (4,0) [below] node {$v_{k-1}$};
    \draw (5,0) [below] node {$v_{k+1}$};
    \draw (6,0) [below] node {$v_{k+2}$};
    \draw (7,0) node {$\dots$};
    \draw (8,0) [below] node {$v_{2k-1}$};
    \draw (9,0) [below] node {$v_{2k}$};
    \draw (4.5,0.7) [above] node {$v_{k}$};
    
    \draw (0,0)--(1.5,0);
    \draw (2.5,0)--(6.5,0);
    \draw (7.5,0)--(9,0);
    \draw (4,0)--(4.5,0.7)--(5,0);
\end{tikzpicture}
\caption{\label{fig:Gimpar} The graph $G_{2k+1}$.}
\end{figure}
Figure \ref{fig:Gimpar} shows the graph $G_{2k+1}$. Now, let $R=2$ and consider the admissible initial state of $G_{2k+1}$ where $\II_{0}=\{v_{k-2},v_k\}$, and the remaining vertices are healthy. We will see that $f_{1}=f_{n+1}$. Indeed, if $n=5$, then: 
\begin{gather*}
\II_{0}=\DD_{1}=\{v_0,v_2\} \qquad \II_{1}=\DD_{2}=\{v_1,v_3\}\qquad \II_{2}=\DD_{3}=\{v_2,v_4\}\qquad \II_{3}=\DD_{4}=\{v_3\}\\
\II_{4}=\DD_{5}=\{v_1\} \qquad \II_{5}=\DD_{6}=\{v_0,v_2\}\qquad \II_{6}=\DD_{7}=\{v_1,v_3\}
\end{gather*}
and if $n=2k+1\ge 7$, $\DD_{1}=\{v_{k-2},v_{k}\}$, and for every $1\le t\le 2k+2=n+1$:
\[\II_t=\DD_{t+1}=\begin{cases}\{v_{k-t-2+2s}:0\le s\le t+1\}&\text{if }1\le t\le k-2,\\
\{v_1,v_3,v_5,\dots,v_{2k-1}\}&\text{if }t=k-1,\\
\{v_{t-k+2+2s}:0\le s\le 2k-t-1\}&\text{if }k\le t\le 2k-1,\\
\{v_{k-(t-n+2)+2s}:0\le s\le t-n+1\}&\text{if }n-1\le t\le n+1.
\end{cases}\]
Since $\II_{1}=\II_{2k+2}$ and $\DD_{1}=\DD_{2k+2}$, we have that $f_{1}=f_{n+1}$. Since the sets $\II_{t}$ are distinct for all $1\le t\le 2k+1$, we conclude that the length of the cycle of the weakly connected component of $f_{0}$ is $2k+1=n$.
\begin{figure}[ht]
    \centering
    \begin{tikzpicture}[scale=0.99]
    \draw (1.75,2) node {$G_{6}$};
    \foreach \x in {0,1,2,...,7}{
        \draw (-5.325+2*\x,1.25) node {$f_{\x}$};
    }
    \begin{scope}[shift={(-6,0)},scale=0.8]
        \draw (0,0.5)--(0.5,0)--(1,0)--(1.5,0.5);
        \draw (0,-0.5)--(0.5,0);
        \draw (1,0)--(1.5,-0.5);
        \draw (0,0.5) [above] node {$v_0$};
        \draw (0,-0.5) [below] node {$v'_0$};
        \draw (0.5,0) [left] node {$v_1$};
        \draw (1,0) [right] node {$v_2$};
        \draw (1.5,0.5) [above] node {$v_3$};
        \draw (1.5,-0.5) [below] node {$v'_3$};
        \draw (0,-0.5) [red] node {$\bullet$};
        \draw (0,0.5) [red] node {$\bullet$};
        \draw (0.5,0) [blue] node {$\bullet$};
        \draw (1,0) [blue] node {$\bullet$};
        \draw (1.5,-0.5) [blue] node {$\bullet$};
        \draw (1.5,0.5) [blue] node {$\bullet$};
    \end{scope}
    \begin{scope}[shift={(-4,0)},scale=0.8]
        \draw (0,0.5)--(0.5,0)--(1,0)--(1.5,0.5);
        \draw (0,-0.5)--(0.5,0);
        \draw (1,0)--(1.5,-0.5);
        \draw (0,0.5) [above] node {$v_0$};
        \draw (0,-0.5) [below] node {$v'_0$};
        \draw (0.5,0) [left] node {$v_1$};
        \draw (1,0) [right] node {$v_2$};
        \draw (1.5,0.5) [above] node {$v_3$};
        \draw (1.5,-0.5) [below] node {$v'_3$};
        \draw (0,-0.5) [black] node {$\bullet$};
        \draw (0,0.5) [black] node {$\bullet$};
        \draw (0.5,0) [red] node {$\bullet$};
        \draw (1,0) [blue] node {$\bullet$};
        \draw (1.5,-0.5) [blue] node {$\bullet$};
        \draw (1.5,0.5) [blue] node {$\bullet$};
    \end{scope}
    \begin{scope}[shift={(-2,0)},scale=0.8]
        \draw (0,0.5)--(0.5,0)--(1,0)--(1.5,0.5);
        \draw (0,-0.5)--(0.5,0);
        \draw (1,0)--(1.5,-0.5);
        \draw (0,0.5) [above] node {$v_0$};
        \draw (0,-0.5) [below] node {$v'_0$};
        \draw (0.5,0) [left] node {$v_1$};
        \draw (1,0) [right] node {$v_2$};
        \draw (1.5,0.5) [above] node {$v_3$};
        \draw (1.5,-0.5) [below] node {$v'_3$};
        \draw (0,-0.5) [blue] node {$\bullet$};
        \draw (0,0.5) [blue] node {$\bullet$};
        \draw (0.5,0) [black] node {$\bullet$};
        \draw (1,0) [red] node {$\bullet$};
        \draw (1.5,-0.5) [blue] node {$\bullet$};
        \draw (1.5,0.5) [blue] node {$\bullet$};
    \end{scope}
    \begin{scope}[shift={(0,0)},scale=0.8]
        \draw (0,0.5)--(0.5,0)--(1,0)--(1.5,0.5);
        \draw (0,-0.5)--(0.5,0);
        \draw (1,0)--(1.5,-0.5);
        \draw (0,0.5) [above] node {$v_0$};
        \draw (0,-0.5) [below] node {$v'_0$};
        \draw (0.5,0) [left] node {$v_1$};
        \draw (1,0) [right] node {$v_2$};
        \draw (1.5,0.5) [above] node {$v_3$};
        \draw (1.5,-0.5) [below] node {$v'_3$};
        \draw (0,-0.5) [blue] node {$\bullet$};
        \draw (0,0.5) [blue] node {$\bullet$};
        \draw (0.5,0) [blue] node {$\bullet$};
        \draw (1,0) [black] node {$\bullet$};
        \draw (1.5,-0.5) [red] node {$\bullet$};
        \draw (1.5,0.5) [red] node {$\bullet$};
    \end{scope}
    \begin{scope}[shift={(2,0)},scale=0.8]
        \draw (0,0.5)--(0.5,0)--(1,0)--(1.5,0.5);
        \draw (0,-0.5)--(0.5,0);
        \draw (1,0)--(1.5,-0.5);
        \draw (0,0.5) [above] node {$v_0$};
        \draw (0,-0.5) [below] node {$v'_0$};
        \draw (0.5,0) [left] node {$v_1$};
        \draw (1,0) [right] node {$v_2$};
        \draw (1.5,0.5) [above] node {$v_3$};
        \draw (1.5,-0.5) [below] node {$v'_3$};
        \draw (0,-0.5) [blue] node {$\bullet$};
        \draw (0,0.5) [blue] node {$\bullet$};
        \draw (0.5,0) [blue] node {$\bullet$};
        \draw (1,0) [red] node {$\bullet$};
        \draw (1.5,-0.5) [black] node {$\bullet$};
        \draw (1.5,0.5) [black] node {$\bullet$};
    \end{scope}
    \begin{scope}[shift={(4,0)},scale=0.8]
        \draw (0,0.5)--(0.5,0)--(1,0)--(1.5,0.5);
        \draw (0,-0.5)--(0.5,0);
        \draw (1,0)--(1.5,-0.5);
        \draw (0,0.5) [above] node {$v_0$};
        \draw (0,-0.5) [below] node {$v'_0$};
        \draw (0.5,0) [left] node {$v_1$};
        \draw (1,0) [right] node {$v_2$};
        \draw (1.5,0.5) [above] node {$v_3$};
        \draw (1.5,-0.5) [below] node {$v'_3$};
        \draw (0,-0.5) [blue] node {$\bullet$};
        \draw (0,0.5) [blue] node {$\bullet$};
        \draw (0.5,0) [red] node {$\bullet$};
        \draw (1,0) [black] node {$\bullet$};
        \draw (1.5,-0.5) [blue] node {$\bullet$};
        \draw (1.5,0.5) [blue] node {$\bullet$};
    \end{scope}
    \begin{scope}[shift={(6,0)},scale=0.8]
        \draw (0,0.5)--(0.5,0)--(1,0)--(1.5,0.5);
        \draw (0,-0.5)--(0.5,0);
        \draw (1,0)--(1.5,-0.5);
        \draw (0,0.5) [above] node {$v_0$};
        \draw (0,-0.5) [below] node {$v'_0$};
        \draw (0.5,0) [left] node {$v_1$};
        \draw (1,0) [right] node {$v_2$};
        \draw (1.5,0.5) [above] node {$v_3$};
        \draw (1.5,-0.5) [below] node {$v'_3$};
        \draw (0,-0.5) [red] node {$\bullet$};
        \draw (0,0.5) [red] node {$\bullet$};
        \draw (0.5,0) [black] node {$\bullet$};
        \draw (1,0) [blue] node {$\bullet$};
        \draw (1.5,-0.5) [blue] node {$\bullet$};
        \draw (1.5,0.5) [blue] node {$\bullet$};
    \end{scope}
    \begin{scope}[shift={(8,0)},scale=0.8]
        \draw (0,0.5)--(0.5,0)--(1,0)--(1.5,0.5);
        \draw (0,-0.5)--(0.5,0);
        \draw (1,0)--(1.5,-0.5);
        \draw (0,0.5) [above] node {$v_0$};
        \draw (0,-0.5) [below] node {$v'_0$};
        \draw (0.5,0) [left] node {$v_1$};
        \draw (1,0) [right] node {$v_2$};
        \draw (1.5,0.5) [above] node {$v_3$};
        \draw (1.5,-0.5) [below] node {$v'_3$};
        \draw (0,-0.5) [black] node {$\bullet$};
        \draw (0,0.5) [black] node {$\bullet$};
        \draw (0.5,0) [red] node {$\bullet$};
        \draw (1,0) [blue] node {$\bullet$};
        \draw (1.5,-0.5) [blue] node {$\bullet$};
        \draw (1.5,0.5) [blue] node {$\bullet$};
    \end{scope}

    \draw (-6.25,-1.25)--(9.5,-1.25) [dashed];
    \draw (1.75,-1.75) node {$G_{7}$};
    \begin{scope}[shift={(-6,-3.75)}]
    \draw (1.25,1.5) node {$f_0$};
    \draw (0,0)--(2.5,0);
    \draw (1,0)--(1.25,0.5)--(1.5,0);
    \draw (0,0) node [below] {$v_{0}$};
    \draw (0.5,0) node [below] {$v_{1}$};
    \draw (1,0) node [below] {$v_{2}$};
    \draw (1.25,0.5) node [above] {$v_{3}$};
    \draw (1.5,0) node [below] {$v_{4}$};
    \draw (2,0) node [below] {$v_{5}$};
    \draw (2.5,0) node [below] {$v_{6}$};
    \draw (0,0) node [blue] {$\bullet$};
    \draw (0.5,0) node [red] {$\bullet$};
    \draw (1,0) node [blue] {$\bullet$};
    \draw (1.25,0.5) node [red] {$\bullet$};
    \draw (1.5,0) node [blue] {$\bullet$};
    \draw (2,0) node [blue] {$\bullet$};
    \draw (2.5,0) node [blue] {$\bullet$};
    \end{scope}
    \begin{scope}[shift={(-2.75,-3.75)}]
    \draw (1.25,1.5) node {$f_1$};
    \draw (0,0)--(2.5,0);
    \draw (1,0)--(1.25,0.5)--(1.5,0);
    \draw (0,0) node [below] {$v_{0}$};
    \draw (0.5,0) node [below] {$v_{1}$};
    \draw (1,0) node [below] {$v_{2}$};
    \draw (1.25,0.5) node [above] {$v_{3}$};
    \draw (1.5,0) node [below] {$v_{4}$};
    \draw (2,0) node [below] {$v_{5}$};
    \draw (2.5,0) node [below] {$v_{6}$};
    \draw (0,0) node [red] {$\bullet$};
    \draw (0.5,0) node [black] {$\bullet$};
    \draw (1,0) node [red] {$\bullet$};
    \draw (1.25,0.5) node [black] {$\bullet$};
    \draw (1.5,0) node [red] {$\bullet$};
    \draw (2,0) node [blue] {$\bullet$};
    \draw (2.5,0) node [blue] {$\bullet$};
    \end{scope}
    \begin{scope}[shift={(0.5,-3.75)}]
    \draw (1.25,1.5) node {$f_2$};
    \draw (0,0)--(2.5,0);
    \draw (1,0)--(1.25,0.5)--(1.5,0);
    \draw (0,0) node [below] {$v_{0}$};
    \draw (0.5,0) node [below] {$v_{1}$};
    \draw (1,0) node [below] {$v_{2}$};
    \draw (1.25,0.5) node [above] {$v_{3}$};
    \draw (1.5,0) node [below] {$v_{4}$};
    \draw (2,0) node [below] {$v_{5}$};
    \draw (2.5,0) node [below] {$v_{6}$};
    \draw (0,0) node [black] {$\bullet$};
    \draw (0.5,0) node [red] {$\bullet$};
    \draw (1,0) node [black] {$\bullet$};
    \draw (1.25,0.5) node [red] {$\bullet$};
    \draw (1.5,0) node [black] {$\bullet$};
    \draw (2,0) node [red] {$\bullet$};
    \draw (2.5,0) node [blue] {$\bullet$};
    \end{scope}
    \begin{scope}[shift={(3.75,-3.75)}]
    \draw (1.25,1.5) node {$f_3$};
    \draw (0,0)--(2.5,0);
    \draw (1,0)--(1.25,0.5)--(1.5,0);
    \draw (0,0) node [below] {$v_{0}$};
    \draw (0.5,0) node [below] {$v_{1}$};
    \draw (1,0) node [below] {$v_{2}$};
    \draw (1.25,0.5) node [above] {$v_{3}$};
    \draw (1.5,0) node [below] {$v_{4}$};
    \draw (2,0) node [below] {$v_{5}$};
    \draw (2.5,0) node [below] {$v_{6}$};
    \draw (0,0) node [blue] {$\bullet$};
    \draw (0.5,0) node [black] {$\bullet$};
    \draw (1,0) node [red] {$\bullet$};
    \draw (1.25,0.5) node [black] {$\bullet$};
    \draw (1.5,0) node [red] {$\bullet$};
    \draw (2,0) node [black] {$\bullet$};
    \draw (2.5,0) node [red] {$\bullet$};
    \end{scope}
    \begin{scope}[shift={(7,-3.75)}]
    \draw (1.25,1.5) node {$f_4$};
    \draw (0,0)--(2.5,0);
    \draw (1,0)--(1.25,0.5)--(1.5,0);
    \draw (0,0) node [below] {$v_{0}$};
    \draw (0.5,0) node [below] {$v_{1}$};
    \draw (1,0) node [below] {$v_{2}$};
    \draw (1.25,0.5) node [above] {$v_{3}$};
    \draw (1.5,0) node [below] {$v_{4}$};
    \draw (2,0) node [below] {$v_{5}$};
    \draw (2.5,0) node [below] {$v_{6}$};
    \draw (0,0) node [blue] {$\bullet$};
    \draw (0.5,0) node [blue] {$\bullet$};
    \draw (1,0) node [black] {$\bullet$};
    \draw (1.25,0.5) node [red] {$\bullet$};
    \draw (1.5,0) node [black] {$\bullet$};
    \draw (2,0) node [red] {$\bullet$};
    \draw (2.5,0) node [black] {$\bullet$};
    \end{scope}
    \begin{scope}[shift={(-4.75,-6.5)}]
    \draw (1.25,1.5) node {$f_5$};
    \draw (0,0)--(2.5,0);
    \draw (1,0)--(1.25,0.5)--(1.5,0);
    \draw (0,0) node [below] {$v_{0}$};
    \draw (0.5,0) node [below] {$v_{1}$};
    \draw (1,0) node [below] {$v_{2}$};
    \draw (1.25,0.5) node [above] {$v_{3}$};
    \draw (1.5,0) node [below] {$v_{4}$};
    \draw (2,0) node [below] {$v_{5}$};
    \draw (2.5,0) node [below] {$v_{6}$};
    \draw (0,0) node [blue] {$\bullet$};
    \draw (0.5,0) node [blue] {$\bullet$};
    \draw (1,0) node [blue] {$\bullet$};
    \draw (1.25,0.5) node [black] {$\bullet$};
    \draw (1.5,0) node [red] {$\bullet$};
    \draw (2,0) node [black] {$\bullet$};
    \draw (2.5,0) node [blue] {$\bullet$};
    \end{scope}
    \begin{scope}[shift={(-1.25,-6.5)}]
    \draw (1.25,1.5) node {$f_6$};
    \draw (0,0)--(2.5,0);
    \draw (1,0)--(1.25,0.5)--(1.5,0);
    \draw (0,0) node [below] {$v_{0}$};
    \draw (0.5,0) node [below] {$v_{1}$};
    \draw (1,0) node [below] {$v_{2}$};
    \draw (1.25,0.5) node [above] {$v_{3}$};
    \draw (1.5,0) node [below] {$v_{4}$};
    \draw (2,0) node [below] {$v_{5}$};
    \draw (2.5,0) node [below] {$v_{6}$};
    \draw (0,0) node [blue] {$\bullet$};
    \draw (0.5,0) node [blue] {$\bullet$};
    \draw (1,0) node [red] {$\bullet$};
    \draw (1.25,0.5) node [blue] {$\bullet$};
    \draw (1.5,0) node [black] {$\bullet$};
    \draw (2,0) node [blue] {$\bullet$};
    \draw (2.5,0) node [blue] {$\bullet$};
    \end{scope}
    \begin{scope}[shift={(2.25,-6.5)}]
    \draw (1.25,1.5) node {$f_7$};
    \draw (0,0)--(2.5,0);
    \draw (1,0)--(1.25,0.5)--(1.5,0);
    \draw (0,0) node [below] {$v_{0}$};
    \draw (0.5,0) node [below] {$v_{1}$};
    \draw (1,0) node [below] {$v_{2}$};
    \draw (1.25,0.5) node [above] {$v_{3}$};
    \draw (1.5,0) node [below] {$v_{4}$};
    \draw (2,0) node [below] {$v_{5}$};
    \draw (2.5,0) node [below] {$v_{6}$};
    \draw (0,0) node [blue] {$\bullet$};
    \draw (0.5,0) node [red] {$\bullet$};
    \draw (1,0) node [black] {$\bullet$};
    \draw (1.25,0.5) node [red] {$\bullet$};
    \draw (1.5,0) node [blue] {$\bullet$};
    \draw (2,0) node [blue] {$\bullet$};
    \draw (2.5,0) node [blue] {$\bullet$};
    \end{scope}
    \begin{scope}[shift={(5.75,-6.5)}]
    \draw (1.25,1.5) node {$f_8=f_1$};
    \draw (0,0)--(2.5,0);
    \draw (1,0)--(1.25,0.5)--(1.5,0);
    \draw (0,0) node [below] {$v_{0}$};
    \draw (0.5,0) node [below] {$v_{1}$};
    \draw (1,0) node [below] {$v_{2}$};
    \draw (1.25,0.5) node [above] {$v_{3}$};
    \draw (1.5,0) node [below] {$v_{4}$};
    \draw (2,0) node [below] {$v_{5}$};
    \draw (2.5,0) node [below] {$v_{6}$};
    \draw (0,0) node [red] {$\bullet$};
    \draw (0.5,0) node [black] {$\bullet$};
    \draw (1,0) node [red] {$\bullet$};
    \draw (1.25,0.5) node [black] {$\bullet$};
    \draw (1.5,0) node [red] {$\bullet$};
    \draw (2,0) node [blue] {$\bullet$};
    \draw (2.5,0) node [blue] {$\bullet$};
    \end{scope}
\end{tikzpicture}
    \caption{\label{fig:Gevo} The evolution of $G_{6}$ and $G_{7}$ with $R=2$. Blue, red and black vertices represent the healthy, infected and dead vertices at each time.}
\end{figure}
Figure \ref{fig:Gevo} shows this evolution for $G_{6}$ and $G_{7}$ with the initial state defined above.
\end{proof}

The following theorem characterizes the directed cycles of length $2$ in every $R$-state transition digraph.

\begin{theorem}\label{thm:2cyles}
    Let $G$ be a graph and let $R\in\mathbb{Z}^{+}$. The $R$-state transition digraph contains a cycle of length $2$ if and only if there exists a partition of the vertices $V=U_0\cup U_1$ such that each vertex in $U_{0}$ has at least $R$ neighbors in $U_1$ and each vertex in $U_{1}$ has at least $R$ neighbors in $U_{0}$.
\end{theorem}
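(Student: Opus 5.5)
The plan is to prove both directions directly from the update rules \eqref{eq:rules}. Sufficiency comes from an explicit alternating configuration. Necessity comes from a vertex-by-vertex analysis of which state sequences of period $2$ the local rules allow.

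\emph{Sufficiency.} Suppose $V=U_0\cup U_1$ is a partition with the stated property. Since $R\ge 1$, the property forces both $U_0$ and $U_1$ to be nonempty. Let $f$ be the state with $f(v)=1$ for $v\in U_0$ and $f(v)=2$ for $v\in U_1$.
\begin{itemize}
\item Under $T_R$, every vertex of $U_0$ is infected and becomes dead.
\item Every vertex of $U_1$ is dead and has at least $R$ infected neighbors (its neighbors in $U_0$), so it is replaced by an infected cell.
\end{itemize}
Hence $g=T_R(f)$ has $U_0$ dead and $U_1$ infected. By the symmetric argument, $T_R(g)=f$. Since $f\neq g$, this is a directed cycle of length $2$ in $\mathcal D_R(G)$. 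This is the same mechanism as in Theorem~\ref{hivBip}.

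\emph{Necessity.} Suppose $f\neq g$ with $T_R(f)=g$ and $T_R(g)=f$. Then each vertex $v$ follows the periodic sequence $f(v),g(v),f(v),\dots$, so both $f(v)\to g(v)$ and $g(v)\to f(v)$ must be allowed one-step transitions. By \eqref{eq:rules}, the only allowed transitions are
\[
0\to 0,\quad 0\to 1,\quad 1\to 2,\quad 2\to 0,\quad 2\to 1 .
\]
Checking the unordered pairs $\{f(v),g(v)\}$:
\begin{itemize}
\item $\{0,1\}$ is impossible, because $1\to 0$ is not allowed.
\item $\{0,2\}$ is impossible, because $0\to 2$ is not allowed.
\item $\{1\}$ and $\{2\}$ alone are impossible, because there is no loop $1\to1$ or $2\to2$.
\end{itemize}
So each vertex is either healthy in both $f$ and $g$, or alternates between $1$ and $2$.

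\emph{Ruling out healthy vertices.} If $v$ is healthy in both $f$ and $g$, then it has no infected neighbor in $f$ or in $g$. An alternating neighbor would be infected in one of them, so every neighbor of $v$ is also permanently healthy. Since $G$ is connected (the paper's standing assumption), either every vertex is permanently healthy, giving $f=g=\boldsymbol{0}_G$ and contradicting $f\neq g$, or no vertex is. Hence every vertex alternates between $1$ and $2$.

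\emph{Building the partition.} Set $U_0=\{v: f(v)=1\}$ and $U_1=\{v: f(v)=2\}$, which partitions $V$.
\begin{itemize}
\item A vertex of $U_1$ is dead in $f$ and infected in $g$. By \eqref{eq:rules} it has at least $R$ neighbors infected in $f$, and these lie in $U_0$.
\item A vertex of $U_0$ is dead in $g$ and infected in $f=T_R(g)$. So it has at least $R$ neighbors infected in $g$, and these are exactly the vertices of $U_1$.
\end{itemize}
This gives the required partition.

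The only delicate step is excluding permanently healthy vertices. It relies on connectivity and on $\boldsymbol{0}_G$ being a loop rather than a $2$-cycle. For a disconnected graph the statement would need to be applied componentwise.
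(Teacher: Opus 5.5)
Your proof is correct and follows essentially the same route as the paper: for necessity you show each vertex is either healthy in both states or alternates between $1$ and $2$, use connectivity to exclude healthy vertices, and read off the partition $U_0=f^{-1}(1)$, $U_1=f^{-1}(2)$ from the replacement rule. The differences are cosmetic: you exhibit the $2$-cycle states directly, whereas the paper starts from the admissible state with $U_0$ healthy and $U_1$ infected, which reaches your $f$ after one step; and your explicit list of allowed transitions also rules out the pair $\{0,2\}$, which the paper leaves implicit.
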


\begin{proof}
    Let $G$ be a graph and assume there exists a partition of the vertices $V=U_{0}\cup U_{1}$ such that each vertex in $U_{0}$ has at least $R$ neighbors in $U_1$ and each vertex in $U_{1}$ has at least $R$ neighbors in $U_{0}$. In this case, taking the admissible initial state $f_{0}$ for which $U_{0}=\HH_{0}$ and $U_{1}=\II_{0}$, the evolution is given, for every $t\geq 1$,
    by $\HH_{t}=\emptyset$,
    \[\II_{t}=\begin{cases}
        U_1&\text{if }t\text{ is odd,}\\
        U_0&\text{if }t\text{ is even,}
    \end{cases}
    \qquad\text{and}\qquad
    \DD_{t}=\begin{cases}
        U_0&\text{if }t\text{ is odd,}\\
        U_1&\text{if }t\text{ is even.}
    \end{cases}\]
    Then, this initial state enters a cycle of length $2$ in the $R$-state transition digraph of $G$.

    For the converse, we proceed by contradiction. Assume there exists a cycle of length $2$ in the $R$-state transition digraph of $G$, but there does not exist a partition of the vertices $V=U_{0}\cup U_{1}$ such that each vertex in $U_{0}$ has at least $R$ neighbors in $U_1$ and each vertex in $U_{1}$ has at least $R$ neighbors in $U_{0}$. Let $f$ and $g$ be the states in this cycle.

    Since the cycle has length $2$, each dead vertex in the cycle must become infected in order to be dead again two steps later.

    First, notice that if a vertex is healthy in one of the states, then it must be healthy in both states. Indeed, if $f(v)=0$ and $g(v)=1$, then $g(v)=1$ would imply $f(v)=2$, since every infected vertex becomes dead in the next time step, which is a contradiction. Analogously, $g(v)=0$ implies $f(v)=0$.

    Let $W\subseteq V$ be the set of vertices that are infected or dead in either $f$ or $g$. Assume that there exists a vertex that is healthy in $f$ or $g$. Since the cycle has length $2$, the set $W$ is nonempty. Moreover, since $G$ is connected, there exists an edge $v_0w$ such that $v_0$ is healthy in both $f$ and $g$, and $w\in W$. The vertex $w$ is infected in exactly one of the states $f$ and $g$. Therefore, $v_0$ has an infected neighbor in one of these states and must become infected in the other state, contradicting the fact that it is healthy in both. Then, $f^{-1}(0)=g^{-1}(0)=\emptyset.$

    In this case, $f^{-1}(2)\cup f^{-1}(1)$ is a partition of the vertices, and  $f^{-1}(2)=g^{-1}(1)$ and $f^{-1}(1)=g^{-1}(2)$. Let $U_0=f^{-1}(1)$ and $U_1=f^{-1}(2)$. If $v\in U_1=f^{-1}(2)$, then $v$ evolves from dead in $f$ to infected in $g$. Hence, $v$ has at least $R$ infected neighbors in the state $f$, that is, at least $R$ neighbors in $f^{-1}(1)=U_0$. Similarly, if $v\in U_0=f^{-1}(1)$, then $v\in g^{-1}(2)$ and evolves from dead in $g$ to infected in $f$. Hence, $v$ has at least $R$ infected neighbors in the state $g$, that is, at least $R$ neighbors in $g^{-1}(1)=U_1$.

    Therefore, $V=U_0\cup U_1$ is a partition such that each vertex in $U_0$ has at least $R$ neighbors in $U_1$, and each vertex in $U_1$ has at least $R$ neighbors in $U_0$, contradicting our assumption.
\end{proof}

It is worth noting that this theorem provides a method for finding cycles of length $2$ in $D_R(G)$. In fact, if we can split $V(G)$ into two sets $U_0$ and $U_1$ as in Theorem \ref{thm:2cyles}, then we only need to compute $(f_t^{R,f_0})_{t\geq 0}$ to obtain the corresponding cycle.

\section{Results for families of graphs}\label{sec:graphFamilies}

We determine the extinction parameters $\hiv(G)$ and $\HIV(G)$ for several well-known graph families, providing explicit examples of the general theory. In particular, we derive closed formulas for cycles, complete graphs, and complete bipartite graphs, showing that the values for the extinction parameters depend on the complete structure of the graph.

\begin{theorem}
    For every integer $n\ge 3$, 
    \[\HIV(C_{n})=\hiv(C_{n})=\begin{cases}
        3 & \text{if $n$ is even,}\\
        2 & \text{if $n$ is odd.}
    \end{cases}\]
\end{theorem}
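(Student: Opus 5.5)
Both cases follow from results already established in Section~\ref{sec:extinctionT}; only the odd case needs a further short argument.

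\emph{Even $n$.} The cycle $C_n$ is connected, bipartite and $2$-regular, with $\Delta(C_n)=2$. Theorem~\ref{thm:bipartiteregular} then gives $\hiv(C_n)=\HIV(C_n)=\Delta(C_n)+1=3$ directly.

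\emph{Odd $n$.} Theorem~\ref{thm:bound} gives $2\le\hiv(C_n)\le\HIV(C_n)$. Since $C_n$ is regular but not bipartite, Theorem~\ref{thm:bipartiteregular} gives $\HIV(C_n)\le\Delta(C_n)=2$. Recall that the proof of that theorem shows the value $R=\Delta(G)$ guarantees extinction whenever $G$ is not both bipartite and regular. Hence every $R\ge 2$ lies in $\mathcal{E}(C_n)$, so $\HIV(C_n)=2$. Combined with the lower bound, $\hiv(C_n)=\HIV(C_n)=2$.

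The main risk is the odd-cycle step of the proof of Theorem~\ref{thm:bipartiteregular}. I will re-check it for $C_n$ with $R=2$ before relying on it.

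Suppose no vertex is ever cured. Since every vertex gets infected at some point, eventually every vertex lies in $\II_t\cup\DD_t$. Because $n$ is odd, two adjacent vertices then share the same state.
\begin{itemize}
\item If both are dead, each has at most one infected neighbor, which is fewer than $R=2$, so each becomes healthy.
\item If both are infected, they are both dead one step later, and the previous case applies.
\end{itemize}
Once a cured vertex exists, Lemma~\ref{Lemma:Estructura} (valid since $R\ge\Delta$) shows the cured set keeps growing until it is all of $V(C_n)$. So extinction holds for all $R\ge2$.

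As a sanity check, Theorem~\ref{thm:2cyles} confirms that $R=2$ admits no $2$-cycle on an odd cycle: that would require a partition in which every vertex has both its neighbors on the opposite side, i.e.\ a proper $2$-coloring, which $C_n$ does not have for odd $n$.
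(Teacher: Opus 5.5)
Your proof is correct and follows the paper's argument. The even case comes directly from Theorem~\ref{thm:bipartiteregular}, and the odd case combines $\hiv(C_n)\ge 2$ from Theorem~\ref{thm:bound} with $\HIV(C_n)\le\Delta(C_n)=2$ from Theorem~\ref{thm:bipartiteregular}; your re-check of the odd-cycle step for $R=2$ and the $2$-cycle sanity check via Theorem~\ref{thm:2cyles} are sound but not needed.
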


\begin{proof}
If $n$ is even, the graph is a bipartite regular graph. From Theorem \ref{thm:bipartiteregular} we have that $\HIV(C_{n})=\hiv(C_{n})=\Delta+1=3$. If $n$ is odd, the graph is not bipartite, and the Theorem \ref{thm:bipartiteregular} implies that $\HIV(C_{n})<\Delta+1=3$. From this and Theorem \ref{thm:bound} we have that $\HIV(C_{n})=\hiv(C_{n})=2$ when $n$ is odd.
\end{proof}

\begin{theorem}
    For every integer $n\geq 3$, $\HIV(K_{n})=\hiv(K_{n}) =\left\lceil\frac{n+1}{2}\right\rceil.$
\end{theorem}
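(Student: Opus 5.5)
The plan is to prove latency for every $R\le\lfloor n/2\rfloor$ and extinction for every $R\ge\lfloor n/2\rfloor+1$. Since $\lfloor n/2\rfloor+1=\lceil (n+1)/2\rceil$ for every $n$, this gives $\hiv(K_n)\ge\lceil (n+1)/2\rceil$ and $\HIV(K_n)\le\lceil (n+1)/2\rceil$ simultaneously. Together with $\hiv\le\HIV$ from Observation~\ref{obs:minmax}, it yields the claimed equality.

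\textbf{Latency.} Fix $1\le R\le\lfloor n/2\rfloor$. Partition $V(K_n)=U_0\cup U_1$ with $|U_0|=\lfloor n/2\rfloor$ and $|U_1|=\lceil n/2\rceil$.
\begin{itemize}
\item In $K_n$, each vertex of $U_0$ is adjacent to all $\lceil n/2\rceil\ge R$ vertices of $U_1$.
\item Each vertex of $U_1$ is adjacent to all $\lfloor n/2\rfloor\ge R$ vertices of $U_0$.
\end{itemize}
The first part of the proof of Theorem~\ref{thm:2cyles} then applies. The admissible initial state with $\HH_0=U_0$ and $\II_0=U_1$ alternates forever between $\II_t=U_1,\ \DD_t=U_0$ and $\II_t=U_0,\ \DD_t=U_1$, so the infection never becomes extinct. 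Hence $\{1,\dots,\lfloor n/2\rfloor\}\subseteq\mathcal L(K_n)$, and therefore $\hiv(K_n)\ge\lfloor n/2\rfloor+1$.

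\textbf{Extinction.} Fix $R\ge\lfloor n/2\rfloor+1$; note that then $2R\ge n+1$. Because $K_n$ is complete, every vertex sees all the other vertices, so the dynamics depends only on the counts $(h,i,d)$ of healthy, infected and dead vertices. Two rules drive the analysis:
\begin{itemize}
\item If $i=0$, every dead vertex becomes healthy, so the next state is $\boldsymbol 0_{K_n}$.
\item If $i\ge1$, every healthy vertex becomes infected, every infected vertex dies, and the dead vertices are reinfected exactly when $i\ge R$.
\end{itemize}
A dead vertex's count of infected neighbours equals $i$, because no dead vertex is itself infected.

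Start from an admissible state $(h,i,0)$ with $i\ge1$; the all-healthy state is trivial. Then $f_1=(0,h,i)$.
\begin{itemize}
\item If $h<R$, then $f_2=(i,0,h)$ and $f_3=\boldsymbol 0_{K_n}$.
\item If $h\ge R$, then $f_2=(0,i,h)$. Here $i=n-h\le n-R<R$, using $2R\ge n+1$. So the $h$ dead vertices are not reinfected, giving $f_3=(h,0,i)$ and $f_4=\boldsymbol 0_{K_n}$.
\end{itemize}
The case $h=0$ falls under $h<R$. Thus every $R\ge\lfloor n/2\rfloor+1$ lies in $\mathcal E(K_n)$, which gives $\HIV(K_n)\le\lfloor n/2\rfloor+1$.

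There is no serious obstacle in this argument. The only delicate point is the bookkeeping for dead vertices: a dead vertex's infected-neighbour count equals the total infected count $i$ (it never counts itself), and the key inequality $h\ge R\Rightarrow i<R$ is exactly where the threshold $2R>n$ enters.
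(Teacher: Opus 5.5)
Your proof is correct and follows essentially the same route as the paper. Extinction uses the same two-case analysis ($|\HH_0|<R$ versus $|\HH_0|\ge R$, with $2R\ge n+1$ forcing $|\II_0|<R$ in the second case), and latency uses the same balanced healthy/infected split that alternates forever; you only phrase the latter through the construction in Theorem~\ref{thm:2cyles} and swap which part gets the ceiling, and neither change matters.
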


\begin{proof}
    Let $R\geq \left\lceil\tfrac{n+1}{2}\right\rceil$ and let $f_{0}$ be any initial state for $K_{n}$. If the initial state has no infected vertices, then $R$ guarantees the extinction of the infection. If the initial state has at least one infected vertex, let $A=\HH_{0}$ and $B=\II_{0}$. Notice that, if $|A|\geq R$, then $|B|=n-|A|\leq n-R<R,$  since $2R\geq n+1$. We have that the infection evolves the vertices in $A$ and $B$ as follows, depending on the size of $A$:
    \begin{center}
    $\begin{tblr}{colspec={*{19}{c}}, colsep=1pt,rowsep=0pt}
    &\qquad&
    \SetCell[c=7]{c} \text{if }\vert A\vert < R &&&&&&&\qquad&
    \SetCell[c=9]{c}\text{if }\vert A\vert \geq R&&&&&&&&&\\
    && f_{0}&&f_{1}&&f_{2}&&f_{3}& &f_{0}&&f_{1}&&f_{2}&&f_{3}&&f_{4}\\\hline
    A&&\HH_{0}&\to&\II_{1}&\to&\DD_{2}&\to&\HH_{3}& &\HH_{0}&\to&\II_{1}&\to&\DD_{2}&\to&\HH_{3}&\to&\HH_{4}\\
    B&&\II_{0}&\to&\DD_{1}&\to&\HH_{2}&\to&\HH_{3}& &\II_{0}&\to&\DD_{1}&\to&\II_{2}&\to&\DD_{3}&\to&\HH_{4}
    \end{tblr}$
    \end{center}
    and then $R$ guarantees the extinction of the infection. It follows that $\HIV(K_{n}) \leq \left\lceil\tfrac{n+1}{2}\right\rceil.$

    On the other hand, if $R<\left\lceil\tfrac{n+1}{2}\right\rceil$, we can consider the initial state where there are exactly $\left\lceil\tfrac{n}{2}\right\rceil$ healthy vertices and $\left\lfloor\tfrac{n}{2}\right\rfloor$ infected vertices. Call $A$ and $B$ the sets of vertices in $\HH_{0}$ and $\II_{0}$, respectively. Since $R$ is an integer, $R\leq \left\lceil\frac{n+1}{2}\right\rceil-1=\left\lfloor\frac{n}{2}\right\rfloor$. Hence $|A|\geq R$ and $|B|\geq R$. Therefore, for any positive integer $t$, we have that
    \[\II_{t}=\begin{cases}
        A, & \text{if $t$ is odd,}\\
        B, & \text{if $t$ is even,}
    \end{cases}
    \qquad \text{and}\qquad
    \DD_{t}=\begin{cases}
        B, & \text{if $t$ is odd,}\\
        A, & \text{if $t$ is even,}
    \end{cases}\]
    and we conclude that $\hiv(K_{n}) \geq \left\lceil\tfrac{n+1}{2}\right\rceil$. From $\HIV(K_{n}) \leq \left\lceil\tfrac{n+1}{2}\right\rceil,$ $\hiv(K_{n})\geq\left\lceil\tfrac{n+1}{2}\right\rceil,$ and Theorem~\ref{thm:bound}, we conclude the proof.
\end{proof}

\begin{theorem}
For all positive integers $m\leq n$, $\HIV(K_{m,n})=\hiv(K_{m,n})
=\max\left\{m+1,\left\lfloor \frac{n}{2}\right\rfloor+1\right\}.$
\end{theorem}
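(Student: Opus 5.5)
The proof splits into a lower bound, given by latent initial states for every $R\leq\max\{m,\lfloor n/2\rfloor\}$, and an upper bound, given by extinction for every $R\geq\max\{m+1,\lfloor n/2\rfloor+1\}$. Together with Theorem~\ref{thm:bound} these give $\hiv(K_{m,n})=\HIV(K_{m,n})$ equal to the stated value. Let $A$ be the part of size $m$ and $B$ the part of size $n$. Every vertex of $A$ has degree $n$, and every vertex of $B$ has degree $m=\delta(K_{m,n})$.

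\emph{Lower bound.} For $R\leq m$, Theorem~\ref{hivBip} gives $R\in\mathcal{L}(K_{m,n})$ directly. Now let $m<R\leq\lfloor n/2\rfloor$. Split $B=B_0\cup B_1$ with $|B_0|=\lfloor n/2\rfloor$ and $|B_1|=\lceil n/2\rceil$, so both have size at least $R$. Start with $A\cup B_1$ infected and $B_0$ healthy. A direct computation, using $R>m$ so that a dead vertex of $B$ is never reinfected, gives the following trajectory:
\begin{itemize}
\item $f_1$: $A,B_1$ dead and $B_0$ infected;
\item $f_2$: $A$ infected (it sees $|B_0|\geq R$ infected vertices), $B_0$ dead, $B_1$ healthy;
\item $f_3$: $A$ dead, $B_1$ infected, $B_0$ healthy;
\item $f_4$: $A$ infected, $B_1$ dead, $B_0$ healthy;
\item $f_5$: $A$ dead, $B_0$ infected, $B_1$ healthy;
\item $f_6$: $A$ infected, $B_0$ dead, $B_1$ healthy;
\item $f_7$: $A$ dead, $B_1$ infected, $B_0$ healthy, which equals $f_3$.
\end{itemize}
So the orbit is periodic of period $4$, the infection is latent, and $R\in\mathcal{L}(K_{m,n})$. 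Hence every $R\leq\max\{m,\lfloor n/2\rfloor\}$ lies in $\mathcal{L}$.

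\emph{Upper bound.} Fix $R\geq\max\{m+1,\lfloor n/2\rfloor+1\}$. Since $R>m=d(b)$ for every $b\in B$, no vertex of $B$ is ever reinfected from the dead state. A vertex of $A$ can be reinfected at time $t+1$ only if $|\II_t\cap B|\geq R>n/2$. The key observation is that $\II_t\cap B$ and $\II_{t+1}\cap B$ are disjoint: infected vertices of $B$ become dead, and dead vertices of $B$ cannot be reinfected. Moreover, a vertex of $\II_t\cap B$ is dead at $t+1$ and healthy at $t+2$, so it can be infected again no earlier than $t+3$, and only via a healthy-to-infected transition. I then want to show that "large" times, meaning times with $|\II_t\cap B|>n/2$, cannot recur forever.

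All vertices of $A$ share the neighborhood $B$, so after at most two steps the $A$-vertices that are infected or dead move in lockstep. The plan is to track the states of $A$ and of the three classes of $B$ (infected, dead, healthy) as a small finite system, in the same style as the $K_n$ proof. I expect to show that after a large time the set $\II\cap B$ is too small to reinfect $A$. Then $A$ goes dead and then healthy. Once $A$ is entirely healthy or cured and no $B$-vertex is infected, the state is $\boldsymbol 0$.

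\emph{Main obstacle.} The hard step is this upper-bound case analysis. In the example above the orbit relies on two disjoint halves of $B$ each of size at least $R$. I would first try to prove that two large times $t<t'$ force $\II_t\cap B$ and $\II_{t'}\cap B$ to overlap, and then derive a contradiction from the healthy$\to$infected timing of those overlapping vertices together with the state of $A$ at $t'-1$.
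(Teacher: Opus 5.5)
Your lower bound is correct. Theorem~\ref{hivBip} covers $R\le m$, and for $m<R\le\lfloor n/2\rfloor$ your trajectory checks out ($f_6=f_2$); it differs from the paper's witness only in that you also infect $A$ at time $0$.

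The gap is the upper bound, which is the substantive half of the theorem. You only outline it, and the outline rests on claims that fail as stated.
\begin{enumerate}
\item[(i)] The vertices of $A$ need not fall into lockstep after two steps. Whenever $1\le|\II_t\cap B|<R$, the update on $A$ is the $3$-cycle $\HH\to\II\to\DD\to\HH$, so two classes of $A$ can stay out of phase.
\item[(ii)] No contradiction can come from two large times $t<t'$, because such pairs occur in trajectories that do go extinct. Take $K_{2,5}$ with $R=3$, one vertex of $A$ healthy and one infected, three vertices of $B$ healthy and two infected (the paper's case $|N_{\II}|<R\le|N_{\HH}|$). Then $\II_1\cap B=\II_5\cap B$ is the same $3$-set. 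The two vertices of $A$ are in different states at every time $1,\dots,5$ (dead and infected at time $2$, for instance), and extinction only occurs at time $9$. The overlap you planned to prove first is just pigeonhole from $2R>n$, so it is not the hard step.
\item[(iii)] The end-game ``$\II\cap B$ is too small to reinfect $A$, so $A$ goes dead and then healthy'' ignores that a healthy vertex of $A$ is infected by a single infected neighbor. The absence of dead-to-infected transitions does not by itself force extinction from a non-admissible configuration. In $K_{2,4}$ with $R=3$ (exactly the claimed threshold), write $A=P\cup Q$ and $B=X\cup Y$ with $|P|=|Q|=1$ and $|X|=|Y|=2$. Then the states of $(P,Q,X,Y)$ cycle as $(\II,\HH,\DD,\HH)\to(\DD,\HH,\HH,\II)\to(\HH,\II,\HH,\DD)\to(\HH,\DD,\II,\HH)\to(\II,\HH,\DD,\HH)$, and no vertex ever passes from dead to infected. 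So any proof must use the history of the trajectory from its admissible start.
\end{enumerate}

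There are two ways to close the gap. The first is the paper's explicit case analysis on the classes $M_{\HH},M_{\II},N_{\HH},N_{\II}$, whose members stay synchronized. The second pushes your idea, with ``large'' meaning $|\II_t\cap B|\ge R$:
\begin{enumerate}
\item[(a)] For a large time $t\ge 1$, some vertex of $A$ was infected at $t-1$, so it is reinfected at $t+1$, and at $t+2$ every vertex of $A$ is healthy or dead.
\item[(b)] Follow a vertex of $\II_t\cap\II_{t'}\cap B$, where $t'$ is the next large time. This shows $t'=t+4$, and it requires $\II_t\cap A\neq\emptyset$ and $\DD_t\cap B\neq\emptyset$. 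But at time $t+4$ no vertex of $A$ is infected and no vertex of $B$ is dead, so there are at most two large times in $[1,\infty)$.
\item[(c)] After the last large time $t_*\ge1$, the system is extinct by $t_*+4$ unless $\II_{t_*}\cap A$ and $\DD_{t_*}\cap B$ are both nonempty. In that case $\II_{t_*+4}\cap B\supseteq\II_{t_*}\cap B$, which makes $t_*+4$ large, a contradiction.
\item[(d)] If there is no large time $t\ge1$, no dead vertex is ever reinfected, and Proposition~\ref{prop:reinfection} gives extinction.
\end{enumerate}
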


\begin{proof}
Let $M$ and $N$ be the parts of $K_{m,n}$  of cardinalities $m$ and $n$, respectively, where $m\leq n$. For an initial state $f_0$, we will denote by $M_{\HH}$ and $M_{\II}$ the sets of healthy and infected vertices of $M$ at time step $0$, respectively. Analogously, we use the notation $N_{\HH}$ and $N_{\II}$.

If $m=n$, from Theorem \ref{thm:bipartiteregular} we have that  $\HIV(K_{m,n})=\hiv(K_{m,n})=m+1.$ In what follows, we assume that $m<n$.

From Theorem~\textcolor{blue}{\ref{thm:bipartiteregular}}, we have that $\HIV(K_{m,n})\leq \Delta(K_{m,n})=n,$ which implies that the infection becomes extinct for every $R\geq n$. First, notice that every $R\leq m$ is latent. Indeed, consider the initial state $M=M_{\II}$ and $N=N_{\HH}$. Then, for $t\geq1$, the infection evolves as
\[\II_{t}=\begin{cases}
    N, & \text{if $t$ is odd,}\\
    M, & \text{if $t$ is even,}
\end{cases}
\qquad\text{and}\qquad
\DD_{t}=\begin{cases}
    M, & \text{if $t$ is odd,}\\
    N, & \text{if $t$ is even.}
\end{cases}\]

Now, if $m<R\leq\left\lfloor \frac{n}{2}\right\rfloor$, then $R$ is latent. To prove this we use the admissible initial $M=M_{\HH}$, $|N_{\HH}|=\left\lfloor \frac{n}{2}\right\rfloor$ and $|N_{\II}|=\left\lceil \frac{n}{2}\right\rceil$. With this configuration, the states for $t\geq1$ are established as follows:
\[\HH_{t}=\begin{cases}
    N_{\HH}, & \text{if $t=4k$ or $t=4k+1$},\\
    N_{\II}, & \text{if $t=4k+2$ or $t=4k+3$},
\end{cases}\]
\[\II_{t}=\begin{cases}
    M, & \text{if $t$ is odd,}\\
    N_{\II}, & \text{if $t=4k$},\\
    N_{\HH}, & \text{if $t=4k+2$},
\end{cases}\qquad\text{and}\qquad\DD_{t}=\begin{cases}
    M, & \text{if $t$ is even,}\\
    N_{\II}, & \text{if $t=4k+1$},\\
    N_{\HH}, & \text{if $t=4k+3$}.
\end{cases}\]
Thus, every $R\leq\max\left\{m,\left\lfloor\frac{n}{2}\right\rfloor\right\}$ is latent. Therefore, $\hiv(K_{m,n})\geq\max\left\{m+1,\left\lfloor\frac{n}{2}\right\rfloor+1\right\}.$

It only remains to check the case when $m<R<n$ and $\left\lfloor\frac{n}{2}\right\rfloor<R$. Notice that the second inequality implies that $2R>n$. Therefore, $N_{\HH}$ and $N_{\II}$ cannot both have cardinality at least $R$. First, we consider the case when $|N_{\HH}|<R$ and $|N_{\II}|<R$, which implies that $N_{\HH},N_{\II}\neq\emptyset$. 

If $M_{\II}\neq\emptyset$, then $M_{\II}, N_{\II}\subseteq\DD_{1}\cap\CC_{2}\cap\CC_{3}$ and $M_{\HH},N_{\HH}\subseteq \II_{1}\cap\DD_{2}\cap\CC_{3}$. Therefore, the system reaches the all-healthy state at time $3$. If $M_{\II}=\emptyset$, then $M=M_{\HH}$ and the evolution is
\[\begin{tblr}{colspec=ccccccccccc,colsep=1pt,rowsep=0pt}
&& f_{0}&&f_{1}&&f_{2}&&f_{3}&&f_{4}\\\hline
M_{\HH}&\quad&\HH_{0}&\to&\II_{1}&\to&\DD_{2}&\to&\HH_{3}&\to&\HH_{4},\\
N_{\HH}&\quad&\HH_{0}&\to&\HH_{1}&\to&\II_{2}&\to&\DD_{3}&\to&\HH_{4},\\
N_{\II}&\quad&\II_{0}&\to&\DD_{1}&\to&\HH_{2}&\to&\HH_{3}&\to&\HH_{4}.
\end{tblr}\]
Hence, in this case, the system reaches the all-healthy state at time $4$. Now, for the case when $\max\{|N_{\HH}|,|N_{\II}|\}\geq R$, we have to analyze the following cases:
\begin{itemize}
\item If none of the sets $M_{\HH},M_{\II},N_{\HH},N_{\II}$ is empty, we have that
\begin{center}
$\begin{tblr}{colspec={*{29}{c}},colsep=1pt,rowsep=0pt}
&&\SetCell[c=7]{c}\text{if }|N_{\HH}|<R\leq |N_{\II}|&&&&&&&\qquad&
\SetCell[c=19]{c}\text{if }|N_{\II}|<R\leq |N_{\HH}|&&&&\\
&&f_{0}&&f_{1}&&f_{2}&&f_{3}&&f_{0}&&f_{1}&&f_{2}&&f_{3}&&f_{4}&&f_{5}&&f_{6}&&f_{7}&&f_{8}&&f_{9}\\\hline
M_{\HH}&\qquad&\HH_{0}&\to&\II_{1}&\to&\DD_{2}&\to&\HH_{3}&&
\HH_{0}&\to&\II_{1}&\to&\DD_{2}&\to&\HH_{3}&\to&\II_{4}&\to&\DD_{5}&\to&\II_{6}&\to&\DD_{7}&\to&\HH_{8}&\to&\HH_{9}\\
M_{\II}&\qquad&\II_{0}&\to&\DD_{1}&\to&\HH_{2}&\to&\HH_{3}&&
\II_{0}&\to&\DD_{1}&\to&\II_{2}&\to&\DD_{3}&\to&\HH_{4}&\to&\HH_{5}&\to&\II_{6}&\to&\DD_{7}&\to&\HH_{8}&\to&\HH_{9}\\
N_{\HH}&\qquad&\HH_{0}&\to&\II_{1}&\to&\DD_{2}&\to&\HH_{3}&&
\HH_{0}&\to&\II_{1}&\to&\DD_{2}&\to&\HH_{3}&\to&\HH_{4}&\to&\II_{5}&\to&\DD_{6}&\to&\HH_{7}&\to&\HH_{8}&\to&\HH_{9}\\
N_{\II}&\qquad&\II_{0}&\to&\DD_{1}&\to&\HH_{2}&\to&\HH_{3}&&
\II_{0}&\to&\DD_{1}&\to&\HH_{2}&\to&\II_{3}&\to&\DD_{4}&\to&\HH_{5}&\to&\HH_{6}&\to&\II_{7}&\to&\DD_{8}&\to&\HH_{9}
\end{tblr}$
\end{center}

\item If at least one of the sets $M_{\HH},M_{\II},N_{\HH},N_{\II}$ is empty, then a direct application of the transition rules gives the following bounds:
\[\begin{array}{c|c|c}
\text{Relation between the sets in $N$}&\text{Additional condition}&\text{Extinction time}\\ 
\hline|N_{\HH}|<R\leq |N_{\II}|&M_{\II}\neq\emptyset&\text{at most }3\\
|N_{\HH}|<R\leq |N_{\II}|&M_{\II}=\emptyset&\text{at most }4\\
|N_{\II}|<R\leq |N_{\HH}|&N_{\II}=\emptyset&\text{at most }4\\
|N_{\II}|<R\leq |N_{\HH}|&N_{\II}\neq\emptyset,\ M_{\HH}=\emptyset&\text{at most }5\\
|N_{\II}|<R\leq |N_{\HH}|&N_{\II}\neq\emptyset,\ M_{\II}=\emptyset&\text{at most }6
\end{array}\]
\end{itemize}

Thus, the extinction of the infection occurs in every case. We conclude that the infection becomes extinct when $\max\left\{m+1,\left\lfloor\frac{n}{2}\right\rfloor+1\right\}\leq R<n$. Since every $R\geq n$ also guarantees the extinction of the infection, it follows that $\HIV(K_{m,n})\leq\max\left\{m+1,\left\lfloor\frac{n}{2}\right\rfloor+1\right\}$.

Combining this inequality with the lower bound on $\hiv(K_{m,n})$ and the Theorem~\ref{thm:bound}, we obtain $\HIV(K_{m,n})=\hiv(K_{m,n})=\max\left\{m+1,\left\lfloor\frac{n}{2}\right\rfloor+1\right\}$.
\end{proof}

For wheel graphs $W_n=K_1\vee C_{n-1}$ of small size, we performed an exhaustive computational analysis. More precisely, for each $4\leq n\leq 10$ and each relevant value of $R$, we examined all $2^n$ admissible initial states and iterated the dynamics until either the all-healthy state was reached or a previously visited state was repeated.

For $11\leq n\leq 26$, we carried out additional computational experiments. The values reported in Table~\ref{tab:extinction-sets-wheels} for this range should therefore be regarded as experimental evidence rather than as an exhaustive determination of the extinction sets. These computations suggest the parity-dependent behavior stated in Conjecture~\ref{conj:wheel-extinction-sets}.

\begin{table}[ht]
\centering
\begin{tabular}{c|c|c|c}
$n$ & $\mathcal{E}(W_n)$ & $\hiv(W_n)$ & $\HIV(W_n)$\\
\hline
$4$  & $\{R\in\mathbb{Z^+}:R\geq3\}$           & $3$  & $3$\\
$5$  & $\{R\in\mathbb{Z^+}:R\geq4\}$           & $4$  & $4$\\
$6$  & $\{R\in\mathbb{Z^+}:R\geq5\}$           & $5$  & $5$\\
$7$  & $\{R\in\mathbb{Z^+}:R\geq6\}$           & $6$  & $6$\\
$8$  & $\{R\in\mathbb{Z^+}:R\geq7\}$           & $7$  & $7$\\
$9$  & $\{R\in\mathbb{Z^+}:R\geq8\}$           & $8$  & $8$\\
$10$ & $\{R\in\mathbb{Z^+}:R\geq9\}$           & $9$  & $9$\\
$11$ & $\{R\in\mathbb{Z^+}:R\geq10\}$          & $10$ & $10$\\
$12$ & $\{3\}\cup\{R\in\mathbb{Z^+}:R\geq11\}$ & $3$  & $11$\\
$13$ & $\{R\in\mathbb{Z^+}:R\geq12\}$          & $12$ & $12$\\
$14$ & $\{3\}\cup\{R\in\mathbb{Z^+}:R\geq13\}$ & $3$  & $13$\\
$15$ & $\{R\in\mathbb{Z^+}:R\geq14\}$          & $14$ & $14$\\
$16$ & $\{3\}\cup\{R\in\mathbb{Z^+}:R\geq15\}$ & $3$  & $15$\\
$17$ & $\{4\}\cup\{R\in\mathbb{Z^+}:R\geq16\}$ & $4$  & $16$\\
$18$ & $\{3\}\cup\{R\in\mathbb{Z^+}:R\geq17\}$ & $3$  & $17$\\
$19$ & $\{4\}\cup\{R\in\mathbb{Z^+}:R\geq18\}$ & $4$  & $18$\\
$20$ & $\{3\}\cup\{R\in\mathbb{Z^+}:R\geq19\}$ & $3$  & $19$\\
$21$ & $\{4\}\cup\{R\in\mathbb{Z^+}:R\geq20\}$ & $4$  & $20$\\
$22$ & $\{3\}\cup\{R\in\mathbb{Z^+}:R\geq21\}$ & $3$  & $21$\\
$23$ & $\{4\}\cup\{R\in\mathbb{Z^+}:R\geq22\}$ & $4$  & $22$\\
$24$ & $\{3\}\cup\{R\in\mathbb{Z^+}:R\geq23\}$ & $3$  & $23$\\
$25$ & $\{4\}\cup\{R\in\mathbb{Z^+}:R\geq24\}$ & $4$  & $24$\\
$26$ & $\{3\}\cup\{R\in\mathbb{Z^+}:R\geq25\}$ & $3$  & $25$\\
\end{tabular}
\caption{Extinction sets and HIV parameters of the wheel graphs
$W_n$ for $4\leq n\leq26$.}
\label{tab:extinction-sets-wheels}
\end{table}

The experimental results reveal a parity-dependent pattern apart from a small number of exceptional cases. For even wheels, the first isolated replacement value that guarantees extinction for every admissible initial state appears to be $R=3$, whereas for sufficiently large odd wheels it appears to be $R=4$. In both cases, extinction is not guaranteed again until the replacement parameter reaches $n-1$. This observation motivates the following conjecture.

\begin{conjecture}\label{conj:wheel-extinction-sets}
The extinction sets of the wheel graphs satisfy $\mathcal{E}(W_n) = \{3\}\cup\{R\in\mathbb{Z^+}:R\geq n-1\}$ for every even $n\geq12$, and $\mathcal{E}(W_n) = \{4\}\cup\{R\in\mathbb{Z^+}:R\geq n-1\} $ for every odd $n\geq17$.Consequently,
\[\hiv(W_n)=\begin{cases}
3, & \text{if $n\geq12$ is even},\\
4, & \text{if $n\geq17$ is odd},
\end{cases}
\qquad\text{and}\qquad \HIV(W_n)=n-1.\]
\end{conjecture}

Although wheel graphs form a structurally simple family, the computational behavior of the extinction parameters $\HIV$ and $\hiv$ is already surprisingly rich. In particular, wheels provide a natural test case for understanding the distinction between the first replacement value that guarantees extinction and the threshold beyond which extinction occurs for every larger value. The isolated values observed in Table~\ref{tab:extinction-sets-wheels} also show that the extinction property is not necessarily monotonically with respect to the replacement parameter.

A complete proof of Conjecture~\ref{conj:wheel-extinction-sets} appears to require techniques capable of relating local transition rules with the global structure of periodic orbits. Possible tools include phase differences along cycles, propagation and annihilation of local fronts, and the structure of the corresponding state-transition digraphs. Thus, a more detailed study of wheel graphs could not only lead to a complete characterization of their extinction sets, but also provide techniques applicable to broader graph families and contribute to a more general understanding of the mechanisms that govern latency and extinction in the Mukwembi dynamics.

\section{Conclusions}\label{sec:conclusions}

In this paper we studied Mukwembi's graph-based model for HIV infection. We introduced the extinction parameters $\HIV(G)$ and $\hiv(G)$, which distinguish between the first replacement parameter $R$ that guarantees extinction for every admissible initial configuration, and the threshold of the replacement parameter above which extinction always occurs. These parameters reveal that extinction is not necessarily a monotone property with respect to the replacement parameter.

We established general bounds for both parameters and characterized the graphs attaining the extremal value $\HIV(G)=\Delta(G)+1$. We also proved that the gap between the two extinction parameters, $\HIV(G)-\hiv(G)$, is not bounded. We determined explicit values of both parameters for classical graph families, providing concrete examples of the general theory. Also, we introduced the $R$-state transition digraph, which provides a natural representation of the model as a cellular automaton. This viewpoint allows extinction and latency to be interpreted in terms of connected components and directed cycles. Within this framework, we proved that cycles of arbitrary length can occur and characterized the existence of cycles of length two.

\begin{figure}[h]
\centering
\begin{tikzpicture}[every node/.style={inner sep=0pt,	font=\small}]
    \node (0) at (0,1) {$\bullet$};
    \node (1) at (0,-0.5) {$\bullet$};
    \node (2) at (-1,-0.5) {$\bullet$};
    \node (3) at (-1,0) {$\bullet$};
    \node (4) at (-1,0.5) {$\bullet$};
    \node (5) at (-1,1) {$\bullet$};
    \node (6) at (1,-0.5) {$\bullet$};
    \node (7) at (1,1) {$\bullet$};
    \node (8) at (1.5,0.25) {$\bullet$};
    \draw (0)--(1)--(2)--(0)--(3)--(1)--(4)--(0)--(5)--(1)--(6)--(0)--(7)--(1);
    \draw (6)--(8)--(7);
\end{tikzpicture}
\caption{\label{fig:example3intervals}A graph such that  $\mathcal{E}(G)$ is conformed by three separated integer intervals}
\end{figure}

In the case that $\HIV(G)=\hiv(G)$, the set $\mathcal{E}(G)$ is the integer interval $[\HIV(G),\infty)$. We found graphs for which $\mathcal{E}(G)$ is not composed for a single interval, for example, the flag graph $F_{k}$ satisfies $\mathcal{E}(F_{k})=\{k+1\}\cup [2k+1,\infty)$. We explore this condition computationally, and we found a graph $G$ for which the set $\mathcal{E}(G)$ consist of three separated intervals, since $\mathcal{E}(G)=\{3\}\cup \{5\}\cup [7,\infty)$ (see Figure \ref{fig:example3intervals}). This graph is the only one with at most $9$ vertices where $\mathcal{E}$ is conformed by three intervals. We suspect that the number of intervals of $\mathcal{E}(G)$ is not bounded. Despite this example, we did not find examples for which $\mathcal{E}(G)$ was conformed by $4$ or more intervals.

The results of this paper suggest several directions for future research. A natural problem is to determine the parameters $\hiv(G)$ and $\HIV(G)$ for broader graph classes or to obtain sharper bounds in terms of structural graph invariants beyond the maximum and minimum degrees. Another more general direction is to understand the structure of the latency and extinction sets $\mathcal{L}(G)$ and $\mathcal{E}(G)$. The examples constructed in this paper show that the transition from latency to extinction need not be monotone as the replacement parameter increases.

From the dynamical viewpoint, it would also be interesting to investigate the structure of the $R$-state transition digraph in greater depth, including the number and size of its weakly connected components. We also characterized the graphs for which $D_R(G)$ contains cycles of length $2$, but a characterization of longer cycles remains open. Finally, the computational complexity of determining the extinction parameters and the development of efficient algorithms for their computation remain largely open problems. 

The code we used is available at \href{https://github.com/XGEu2X/GraphDynamicSpreading.}{https://github.com/XGEu2X/GraphDynamicSpreading}.

\section*{Acknowledgments}

This work was supported by the Secretaría de Ciencia, Humanidades, Tecnología e Innovación (SECIHTI) through project CBF-2025-G-1435. M. A. Espinosa-García acknowledges the financial support provided by the Secretaría de Ciencia, Humanidades, Tecnología e Innovación (SECIHTI) through the Estancias Posdoctorales por México 2025 program. G. L. Maldonado acknowledges the financial support provided by the UNAM-DGAPA Postdoctoral Fellowship Program at the Universidad Nacional Autónoma de México.

\bibliographystyle{elsarticle-num}
\bibliography{refs}

\end{document}